\DeclareFontFamily{U}{mathc}{}
\DeclareFontShape{U}{mathc}{m}{it}%
{<->s*[1.03] mathc10}{}
\DeclareMathAlphabet{\mathcal}{U}{mathc}{m}{it}
\newtheorem{theorem}{Theorem}[section]
\newtheorem{remark}[theorem]{Remark}
\newtheorem{proposition}[theorem]{Proposition}
\newtheorem{lemma}[theorem]{Lemma}
\newtheorem{assumption}[theorem]{Assumption}
\newtheorem{example}[theorem]{Example}
\renewcommand{\tilde}{\widetilde}
\renewcommand{\hat}{\widehat}
\newcommand{\Q}{\mathbb{Q}}
\newcommand{\R}{\mathbb{R}}
\newcommand{\Z}{\mathbb{Z}}
\newcommand{\C}{\mathbb{C}}
\newcommand{\T}{\mathcal{T}}
\newcommand{\B}{\mathcal{B}}
\renewcommand{\H}{\mathcal{H}}
\renewcommand{\phi}{\varphi}
\newcommand{\A}{\mathcal{A}}
\renewcommand{\P}{\mathbb{P}}
\renewcommand{\O}{\mathcal{O}}
\newcommand{\X}{\mathcal{X}}
\newcommand{\Y}{\mathcal{Y}}
\newcommand{\I}{\mathcal{I}}
\renewcommand{\L}{\mathcal{L}}
\newcommand{\V}{\mathcal{V}}
\newcommand{\M}{\mathcal{M}}
\newcommand{\Pic}{\mathrm{Pic}}
\newcommand{\orb}{\text{orb}}
\newcommand{\CR}{\text{CR}}
\newcommand{\enef}{\widetilde{\text{Nef}}}
\newcommand{\one}{\textbf{1}}
\newcommand{\Lat}{\mathbb{L}}
\newcommand{\Curve}{\mathcal{C}}
\newcommand{\diag}{\mathrm{diag}}
\DeclareMathOperator{\Spec}{Spec}
\DeclareMathOperator{\Hom}{Hom}
\newcommand{\inner}[1]{\langle #1 \rangle}
\begin{document}
\title{The Open Crepant Transformation Conjecture for Toric Calabi-Yau 3-Orbifolds}
\author{Song Yu}
\address{Yau Mathematical Sciences Center, Tsinghua University, Beijing, China}
\email{song-yu@tsinghua.edu.cn}

\begin{abstract}
We prove an open version of Ruan's Crepant Transformation Conjecture for toric Calabi-Yau 3-orbifolds, which is an identification of disk invariants of $K$-equivalent semi-projective toric Calabi-Yau 3-orbifolds relative to corresponding Lagrangian suborbifolds of Aganagic-Vafa type. Our main tool is a mirror theorem of Fang-Liu-Tseng that relates these disk invariants to local coordinates on the B-model mirror curves. Treating toric crepant transformations as wall-crossings in the GKZ secondary fan, we establish the identification of disk invariants through constructing a global family of mirror curves over charts of the secondary variety and understanding analytic continuation on local coordinates. Our work generalizes previous results of Brini-Cavalieri-Ross on disk invariants of threefold type-A singularities and of Ke-Zhou on crepant resolutions with effective outer branes.
\end{abstract}

\maketitle
%

\section{Introduction}

\subsection{The Crepant Transformation Conjecture}
The \emph{Crepant Transformation Conjecture} of Ruan \cite{Ruan02, Ruan06} asserts that a pair of $K$-equivalent \cite{Wang98} manifolds or Gorenstein orbifolds $\X_\pm$ have isomorphic quantum cohomologies. Bryan-Graber \cite{BG09} formulated this conjecture in terms of Gromov-Witten theory, namely that there is an identification of genus-zero Gromov-Witten potentials $F_0^{\X_\pm}$ under a graded linear isomorphism between Chen-Ruan cohomologies $H_\CR^*(\X_\pm;\C)$ and analytic continuation in the quantum variables. In the Givental formalism of the conjecture \cite{Givental04, CIT09, CR13}, the genus-zero Gromov-Witten invariants of $\X_\pm$ are encoded in a Lagrangian cone $\L_\pm$ in a symplectic vector space $\H_\pm$, and the assertion is the existence of a graded linear symplectomorphism between $\H_\pm$ that identifies $\L_\pm$ under analytic continuation. Extended statements that include higher-genus Gromov-Witten invariants have also been formulated. The Crepant Transformation Conjecture is closely related to the study of quantum cohomology and Gromov-Witten invariants using mirror symmetry. In a typical scenario, the $K$-equivalent pair $\X_\pm$ correspond to distinguished points $P_\pm$ in a B-model moduli space, and the Gromov-Witten potentials of $\X_\pm$ are mirror to local solutions to a global system of differential equations near $P_\pm$. The desired identification of Gromov-Witten theories of $\X_\pm$ is expected to result from analytic continuation of such local solutions on the B-model moduli.

The Crepant Transformation Conjecture has been verified to various extents of generalities and has become a guiding principle in the study of the relation between quantum cohomology and birational geometry. See for instance \cite{BMP09, BMP11, BGh09, BGh092, BGP08, CLLZ14, CLZZ09, Coates09, CCIT09, CI18, CIJ18, CIT09, Gillam13, GW12, ILLW12, LLQW16, LLW10, LLW12, LLW16, LP18, McLean18, Perroni07, Wise11}. It has received particular success in the toric setting, where most required ingredients have explicit descriptions. The birational geometry of toric orbifolds (or smooth toric Deligne-Mumford stacks \cite{FMN10, BCS05} with trivial generic stablizer) is combinatorially understood in terms of wall-crossing in the GKZ secondary fan or variation of GIT stability conditions \cite{CLS11, DH98, FJR18, Thaddeus96}. From the Stanley-Reisner presentation of Chen-Ruan cohomology \cite{BCS05}, there is a natural graded linear isomorphism between $H_\CR^*(\X_\pm;\C)$ for a $K$-equivalent pair $\X_\pm$. Finally, a mirror theorem for general semi-projective toric orbifolds is known \cite{CCIT15}, and the secondary variety naturally arises as the B-model moduli for analytic continuation. As examples of previous progress, Coates-Corti-Iritani-Tseng \cite{CCIT09} proved the genus-zero Crepant Transformation Conjecture for the $A_n$-surface and its full resolution in the Bryan-Graber formalism, and Coates-Iritani-Jiang \cite{CIJ18} for general $K$-equivalent semi-projective toric orbifolds or complete intersections in the Givental formalism. The higher-genus Crepant Transformation Conjecture was proven for the $A_n$-surface and its resolution by Zhou \cite{Zhou08}, and for general compact weak-Fano toric orbifolds by Coates-Iritani \cite{CI18}.

More recently, there has also been much progress on the behavior of quantum cohomology under \emph{discrepant} birational transformations. We refer to \cite{AS18,Bayer04,CLS22,Iritani20,Iritani23,LLW17,LLW21,MX23} and the references therein.

\subsection{The Open Crepant Transformation Conjecture}
The present work focuses on the Crepant Transformation Conjecture in the extended context of open Gromov-Witten theory, which studies stable maps from bordered orbifold Riemann surfaces to orbifolds with Lagrangian boundary conditions \cite{KL01, Ross14}. We restrict our attention to toric Calabi-Yau 3-orbifolds with Lagrangian suborbifolds of Aganagic-Vafa type \cite{AV00, AKV02}, for which open Gromov-Witten invariants can be defined and computed via localization \cite{GZ02, FL13, FLT19}. Following the Bryan-Graber formalism, informally, the \emph{Open Crepant Transformation Conjecture} in this setting asserts that for a pair of $K$-equivalent toric Calabi-Yau 3-orbifolds with corresponding Aganagic-Vafa branes, there is an identification between \emph{disk potentials}, which encode stable maps from genus-zero domains with one boundary component, under analytic continuation in the moduli parameters. In general, one expects an identification of all-genus open-closed Gromov-Witten potentials, which accounts for stable maps from domains with possibly higher genus and multiple boundary components. We restrict our attention in the present work to the most fundamental case of disk potentials.

Works of Brini, Cavalieri, and Ross \cite{BC18, BCR17, CR12} provided first evidences of the Open Crepant Transformation Conjecture: Using their notion of \emph{winding neutral} disk potentials, they proved the conjecture in the Givental formalism for the examples of $[\C^2/\Z_n] \times \C$, $[\C^3/\Z_2 \times \Z_2]$, and $K_{\P(n,1,1)}$ (and their full resolutions). For the first two examples, they were further able to prove the conjecture in all genera by quantizing the winding neutral disk potentials. With a different method, Ke-Zhou \cite{KZ15} proved the conjecture for a partial resolution $\X_+ \to \X_-$ where the Aganagic-Vafa brane in $\X_\pm$ specifying the boundary condition is \emph{effective} and \emph{outer}, i.e. has trivial generic stablizer and intersects a noncompact torus-invariant curve in $\X_\pm$. They identified the disk potentials of $\X_\pm$ by using an explicit formula from Fang-Liu-Tseng \cite{FLT19}.

In the present work, we prove the Open Crepant Transformation Conjecture for semi-projective toric Calabi-Yau 3-orbifolds in full generality. This includes cases where $\X_\pm$ are related by a \emph{flop} or a \emph{partial} resolution, and where the Aganagic-Vafa branes are \emph{inner}, i.e. intersecting compact torus-invariant curves, or \emph{ineffective}, i.e. having nontrivial generic stablizers. Moreover, using mirror symmetry for disk invariants, we offer a simple formalism for phrasing and studying the conjecture, which develops the idea hinted by \cite{FLT19, FLZ19} and illustrated for the example of $K_{\P^2}$ by Fang \cite{Fang19}.

We note that the conjecture has also been studied for disk invariants of compact toric orbifolds with boundary conditions specified by a Lagrangian torus moment map fiber \cite{CCLT14}, from the perspective of SYZ mirror symmetry. For toric Calabi-Yau manifolds, disk invariants specified by the two types of boundary conditions are closely related \cite{HKLZ19}.

\subsection{Our main results}
We now state our main results more carefully. Let $\X$ be a semi-projective toric Calabi-Yau 3-orbifold, $\L$ be an Aganagic-Vafa brane in $\X$, and $f \in \Z$ be an additional parameter called the \emph{framing} of $\L$. The generic stablizer group of $\L$ is isomorphic to $\mu_{\ell}$ for some $\ell \in \Z_{>0}$ and $H_1(\L;\Z) \cong \Z \times \mu_\ell$. Let $F^{\X, (\L,f)}= F_{0,1}^{\X, (\L,f)}$ be the \emph{A-model disk potential} of $(\X, \L, f)$ defined by \cite{FLT19}. It takes value in the Chen-Ruan cohomology $H_\CR^*(\B \mu_\ell; \C)$, and depends on closed moduli parameters $Q = (Q_1, \dots, Q_{\dim H^2_{\CR}(\X;\C)})$ and an \emph{open} moduli parameter $X$ that parametrizes the map from the boundary component. (Our variable $Q_a$ is $\tau_a$ in \cite{FLT19} and our $X$ is $Q^bX_1$ in \cite{FLT19}.) For instance, when $\L$ is an outer brane, we have
\[ F^{\X, (\L,f)}(Q,X) = \sum_{\substack{\beta' \in H_2(X, L;\Z) \text{ effective}\\ n \ge 0 \\ (d, \lambda) \in H_1(\L;\Z)}} \frac{\inner{(\tau_2(Q))^n}^{\X, (\L,f)}_{0, \beta', (d, \lambda)}}{n!}X^d\xi_\ell^{\ell-\bar{\lambda}} \one_{\bar{\lambda}},   \]
where 
\begin{itemize}
    \item $X,L$ denote the coarse moduli space of $\X, \L$ respectively;

    \item $\tau_2(Q)$ is a certain equivariant second Chen-Ruan cohomology class of $\X$;

    \item $\inner{}^{\X, (\L,f)}_{0, \beta', (d, \lambda)}$ is the disk invariant that virtually counts degree $\beta'$ maps with boundary winding number $d$ and monodromy $\lambda$;

    \item $\{\one_0, \dots, \one_{\ell-1}\}$ is the homogenous basis of $H_\CR^*(\B \mu_\ell; \C)$ consisting of identity elements of the inertia components;

    \item $\xi_\ell = \exp(-\frac{\pi \sqrt{-1}}{\ell})$ and $\bar{\lambda} \in \{0, \dots, \ell-1\}$ is specified by $\lambda = \exp(\frac{2\pi\bar{\lambda}\sqrt{-1}}{\ell})$.
\end{itemize}
The definition of $F^{\X, (\L,f)}$ in the case where $\L$ is inner as well as additional discussions will be given in Section \ref{sect:DiskInvariants}. There is a decomposition
\[  F^{\X, (\L,f)} = F_{\ell}^{\X, (\L,f)}\one_0 + \sum_{j = 1}^{\ell-1} F_j^{\X, (\L,f)} \xi_\ell^{\ell-j}\one_j,  \]
where each component $F_j^{\X, (\L,f)}$ is a series with rational coefficients.

As a basic step, we consider a pair of semi-projective toric Calabi-Yau 3-orbifolds $\X_\pm$ that differ by a single wall-crossing in the secondary fan. Either $\X_\pm$ are related by a flop, or $\X_+$ is a partial resolution of $\X_-$. Let $(\L_-, f_-)$ be a framed Aganagic-Vafa brane in $\X_-$ whose generic stablizer group has order $\ell$. In the case $\L_-$ is disjoint from the exceptional locus of the crepant transformation, it corresponds to an Aganagic-Vafa brane $\L_+$ in $\X_+$ with generic stablizer group of order $\ell$ and framing $f_+$ depending on $f_-$.

\begin{theorem}[Brane preserved] \label{thm:Main1}
In the situation above, there is a component-wise identification of disk potentials
\[  F^{\X_+, (\L_+,f_+)} = F^{\X_-, (\L_-, f_-)}  \]
under analytic continuation in the open-closed moduli parameters and framing relations.
\end{theorem}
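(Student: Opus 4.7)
The plan is to leverage the Fang-Liu-Tseng mirror theorem \cite{FLT19} to transport both disk potentials onto a common B-model geometry, and then realize the wall-crossing in the secondary fan as an analytic continuation between two local coordinate systems on a single global family of mirror curves. First I would apply the mirror theorem to write each component $F_j^{\X_\pm, (\L_\pm, f_\pm)}$ as an explicit expression in a local coordinate $X_\pm$ on the mirror curve $\Curve_\pm$, read off near a distinguished puncture determined combinatorially by the framed brane $(\L_\pm, f_\pm)$. Once the potentials are phrased this way, the desired identification reduces to an identification of the underlying mirror curves together with their distinguished punctures and local uniformizers.

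Next I would construct a global family of mirror curves $\Curve \to \M$ over a neighborhood of the relevant wall in the secondary variety $\M$ whose restrictions to the two charts corresponding to $\X_\pm$ recover $\Curve_\pm$. The construction uses the GKZ/secondary fan description of the toric birational geometry: each chart carries its own uniformizing closed moduli $Q_\pm$, and the transition across the wall is a monomial change of variables dictated by the combinatorial ray data. Because $\L_\pm$ is determined by a cone of the toric fan lying away from the exceptional locus of the crepant transformation, the associated puncture on the mirror curve is preserved by $\Curve \to \M$; this is the geometric content of the hypothesis that $\L_-$ is disjoint from the exceptional set. The framings $f_\pm$ then select local uniformizers $X_\pm$ near this preserved puncture, and the framing transformation $f_+ = f_+(f_-)$ of the theorem should emerge as the comparison of the two uniformizers on a common open subset.

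With these pieces, the theorem follows once two compatibility statements are established: first, that the closed moduli $Q_\pm$ together with the open moduli $X_\pm$ are related by the expected analytic continuation on overlapping charts of $\M$, which can be read off directly from the GKZ data and the definition of $\Curve$; second, that the inertia-indexed decomposition $F^{\X, (\L,f)} = F_\ell \one_0 + \sum_j F_j \xi_\ell^{\ell-j} \one_j$ is intrinsic to the preserved puncture, so that each component $F_j^{\X_\pm, (\L_\pm, f_\pm)}$ is identified separately. The latter uses that the generic stabilizer $\mu_\ell$ of the brane, along with the monodromy grading $\bar\lambda \in \{0, \dots, \ell-1\}$ attached to the puncture, is unchanged under the wall-crossing.

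The main obstacle I expect is the construction and analytic control of the global family $\Curve \to \M$ near the wall, along with the precise tracking of how the distinguished puncture and its local uniformizer glue across charts. In particular, \emph{deriving} the framing relation $f_+ = f_+(f_-)$ from the intrinsic coordinate change on $\Curve$ (rather than imposing it by hand) will require a careful matching between the combinatorics of the GKZ wall and the B-model geometry. A secondary subtlety is the inner brane case, where the associated puncture arises from a compact torus-invariant curve; this affects the setup of the open modulus $X_\pm$ and must be accommodated in the definition of $\Curve$, but does not alter the structure of the argument above.
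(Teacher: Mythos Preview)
Your plan is essentially the paper's own strategy: pass to the B-model via the Fang--Liu--Tseng mirror theorem, build a global mirror curve over an open B-model moduli space by gluing two charts across the wall (the paper's Proposition~\ref{prop:MirrorcurveWallCross}), and read off the framing relation from the explicit change of variables. The paper carries this out concretely by identifying the mirror curve \emph{equations} $H_\pm(q_\pm,x_\pm,y_\pm)=0$ term-by-term and then comparing the $\ell$ local solutions $\kappa_1^\pm,\dots,\kappa_\ell^\pm$ for $y_\pm$ near the two large-radius-limit points $P_\pm$.

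There is, however, one genuine step you are glossing over. You assert that the inertia-indexed decomposition is ``intrinsic to the preserved puncture,'' so that each component $F_j$ matches automatically. But on the B-model side the $\ell$ components correspond to the $\ell$ distinct local branches $\kappa_1,\dots,\kappa_\ell$ of $y$ (distinguished only by their limiting values, the $\ell$-th roots of $-1$), and analytic continuation along a generic path from $P_+$ to $P_-$ sends $(\kappa_1^+,\dots,\kappa_\ell^+)$ to some \emph{permutation} of $(\kappa_1^-,\dots,\kappa_\ell^-)$. Nothing about the preserved stabilizer $\mu_\ell$ forces this permutation to be the identity. The paper handles this with a separate monodromy argument (Lemma~\ref{lem:Monodromy}): the monodromy of the restricted equation $1+q_{a_1}y+\cdots+q_{a_{\ell-1}}y^{\ell-1}+y^\ell=0$ over the coordinate subspace $\tilde{\M}_{-,0}$ acts as the full symmetric group $S_\ell$ on the roots, so one can always \emph{choose} a path realizing the identity permutation. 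Without this, your argument would only yield $F_j^{\X_+}=F_{\sigma(j)}^{\X_-}$ for some unspecified $\sigma\in S_\ell$, which is weaker than the claimed component-wise identification. You should add this monodromy step explicitly.
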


In the case where $\L_-$ is ineffective and the partial resolution $\X_+ \to \X_-$ partially resolves the singularity along $\L_-$, there are \emph{two} Aganagic-Vafa branes $\L_+^1, \L_+^2$ in the preimage of $\L_-$ with framings $f_+^1, f_+^2$ depending on $f_-$. The orders $\ell_1, \ell_2$ of the generic stablizer groups of $\L_+^1, \L_+^2$ add up to $\ell$.

\begin{theorem}[Resolution along ineffective brane] \label{thm:Main2}
In the situation above, there is an identification
\[  \begin{bmatrix}
      F_1^{\X_+, (\L_+^1, f_+^1)}\\
      \vdots\\
      F_{\ell_1}^{\X_+, (\L_+^1, f_+^1)}\\
      F_1^{\X_+, (\L_+^2, f_+^2)}\\
      \vdots\\
      F_{\ell_2}^{\X_+, (\L_+^2, f_+^2)}
    \end{bmatrix} =
    U \begin{bmatrix}
        F_1^{\X_-, (\L_-, f_-)}\\
        \vdots\\
        F_\ell^{\X_-, (\L_-, f_-)}
      \end{bmatrix} \]
under analytic continuation in the open-closed moduli parameters and framing relations, where $U = U(\ell_1, \ell_2) \in GL(\ell;\C)$ is an invertible $\ell$-by-$\ell$ matrix depending on $\ell_1, \ell_2$ only.
\end{theorem}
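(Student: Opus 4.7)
Following the strategy outlined in the introduction and already deployed for Theorem \ref{thm:Main1}, the plan is to use the Fang--Liu--Tseng mirror theorem to replace the assertion about disk potentials with a statement about analytic continuation of certain local coordinates on a global family of mirror curves, and then carry out that analytic continuation in the local model of the wall-crossing.

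First I would set up the mirror picture on each side. The mirror theorem provides a mirror curve $\Curve_-$ for $\X_-$ and $\Curve_+$ for $\X_+$, together with canonical local coordinates near the distinguished points singled out by the brane data. Because $\L_-$ is ineffective with generic stabilizer of order $\ell$, the relevant local coordinate $X_-$ on $\Curve_-$ is ramified of order $\ell$, and the mirror theorem identifies the components $F_j^{\X_-, (\L_-, f_-)}$ for $j = 1, \dots, \ell$ with coefficients in the Puiseux expansion of the log of $X_-$, with the roots of unity $\xi_\ell^{\ell-j}$ recording the $\mu_\ell$-monodromy. On $\Curve_+$, the two branes $\L_+^1, \L_+^2$ select two distinguished points with local coordinates $X_+^1, X_+^2$, ramified of orders $\ell_1$ and $\ell_2$ respectively, and the components $F_j^{\X_+, (\L_+^i, f_+^i)}$ are read off from the corresponding Puiseux expansions.

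Next I would construct a global family of mirror curves $\Curve$ over a chart of the secondary variety containing both large radius limit points $P_-$ and $P_+$, as in the proof of Theorem \ref{thm:Main1}. The key geometric input here is that the partial resolution $\X_+ \to \X_-$ partially resolves the singularity along $\L_-$: on the mirror side, this translates into the statement that the single distinguished ramification point on $\Curve_-$ of order $\ell$ gets replaced, as one moves along the family to the other chamber, by the two distinguished ramification points on $\Curve_+$ of orders $\ell_1$ and $\ell_2$ with $\ell_1 + \ell_2 = \ell$. The open-closed mirror maps on either side, together with the prescribed framing relations $f_\pm \leftrightarrow f_+^1, f_+^2$, match up the open moduli variables with $X_-$ and $X_+^1, X_+^2$. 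Thus the theorem is reduced to the claim that continuing the coefficient expansion of $\log X_-$ across the wall produces an invertible linear combination, with matrix $U(\ell_1, \ell_2)$, of the coefficient expansions of $\log X_+^1$ and $\log X_+^2$.

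The heart of the argument is then a purely local computation in a universal model of the wall: the resolution of the $\frac{1}{\ell}(1,-1)$ slice into pieces of widths $\ell_1$ and $\ell_2$. In this local model, the three local coordinates $X_-, X_+^1, X_+^2$ are related by explicit monomial transformations coming from the change of fan, and the redistribution of the $\ell$ monodromy sectors of $\L_-$ into the $\ell_1$ sectors of $\L_+^1$ and the $\ell_2$ sectors of $\L_+^2$ is governed entirely by the combinatorics of how $\ell_1$-th and $\ell_2$-th roots of unity refine into $\ell$-th roots. Extracting coefficients on both sides yields a linear map with an explicit, invertible matrix $U(\ell_1, \ell_2)$ that depends only on $\ell_1, \ell_2$, as required.

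The step I expect to be the main obstacle is the honest local analysis near the wall: one must track, on the global family $\Curve$, the precise identification of the local uniformizers at the degenerating node, account for the correct open-closed mirror map and framing transformations established in Theorem \ref{thm:Main1}, and verify that the resulting linear relation between coefficient expansions is exactly an invertible $\ell \times \ell$ matrix depending only on $\ell_1, \ell_2$ and not on any of the other toric or moduli data. Once this local model is in hand, the universality and invertibility of $U(\ell_1, \ell_2)$ follow formally, and the theorem reduces to inserting this into the framework already used for the brane-preserving case.
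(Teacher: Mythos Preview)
Your overall strategy is the paper's: pass to the B-model via the Fang--Liu--Tseng mirror theorem, fit the mirror curves of $\X_\pm$ into a single family over an open B-model moduli space, and then analytically continue the local solutions of the mirror curve equation from the large-radius point $P_+$ to $P_-$. Two points, however, deserve correction.

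First, your description of what the mirror theorem actually provides is off. The theorem does not identify the $F_j$ (or rather their B-model pullbacks $W_j$) with ``coefficients in the Puiseux expansion of $\log X_-$''. What it gives is a relation of the form
\[
\left(x\frac{\partial}{\partial x}\right)^2
\begin{bmatrix} \xi_\ell^{\ell-1}W_1 \\ \vdots \\ W_\ell \end{bmatrix}
= U_\ell \left(x\frac{\partial}{\partial x}\right)
\begin{bmatrix} \log \kappa_1 \\ \vdots \\ \log \kappa_\ell \end{bmatrix},
\]
where $\kappa_1,\dots,\kappa_\ell$ are the $\ell$ roots of the mirror curve equation $H(y)=0$ that converge at the large-radius limit, and $U_\ell$ is the fixed $\ell\times\ell$ matrix of $\ell$-th roots of unity. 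On the $\X_+$ side one applies this twice, once for each brane, obtaining the block matrix $\mathrm{diag}(U_{\ell_1},U_{\ell_2})$ acting on the $\ell_1+\ell_2=\ell$ roots $\kappa_1^+,\dots,\kappa_\ell^+$ of the \emph{same} global equation $H_+(y)=0$. Thus the matrix $U(\ell_1,\ell_2)$ is not something to be extracted from a ``purely local computation in a universal model'': it is already sitting in the statement of the mirror theorem, as (essentially) $\mathrm{diag}(\xi_{\ell_1}^{j},\xi_{\ell_2}^{j})^{-1}\,\mathrm{diag}(U_{\ell_1},U_{\ell_2})\,U_\ell^{-1}\,\mathrm{diag}(\xi_\ell^{j})$.

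Second, and this is the real gap, you do not address the ambiguity in the analytic continuation. Continuing $\kappa_1^+,\dots,\kappa_\ell^+$ along a path to $P_-$ yields $\kappa_{\sigma(1)}^-,\dots,\kappa_{\sigma(\ell)}^-$ for \emph{some} permutation $\sigma$ that a priori depends on the path and could in principle depend on the ambient toric data. Your last paragraph flags exactly this as the ``main obstacle'' but offers no mechanism to resolve it. The paper handles it with a separate monodromy lemma: restricting to a coordinate subspace $\tilde{\M}_{-,0}$ on which the equation reduces to $1+q_{a_1}y+\cdots+q_{a_{\ell-1}}y^{\ell-1}+y^\ell=0$, the map sending the roots to their elementary symmetric functions realizes $\tilde{\M}_{-,0}$ as the quotient of an irreducible hypersurface by the full $S_\ell$-action, so the monodromy around the discriminant locus acts as the full symmetric group on the roots. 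Hence one can choose the path so that $\sigma=\mathrm{id}$, and only then does the resulting matrix depend solely on $\ell_1,\ell_2$. Without this step your argument is incomplete.
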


We provide a more precise statement of these results in Theorem \ref{thm:OCTC}, which is stated in terms of B-model disk potentials $W^{\X,(\L,f)}$ pulled back from $F^{\X, (\L,f)}$ under the open-closed mirror map. Since a general pair of $K$-equivalent semi-projective toric Calabi-Yau 3-orbifolds are related by a sequence of toric wall-crossings, the Open Crepant Transformation Conjecture is implied by Theorems \ref{thm:Main1} and \ref{thm:Main2} above. We are thus able to recover the results of \cite{BC18, BCR17, CR12, KZ15} (on disk invariants).

\subsection{Our method via mirror symmetry for disks}
We approach the Open Crepant Transformation Conjecture via mirror symmetry for disk invariants. For a smooth toric Calabi-Yau 3-fold $\X$, Aganagic-Vafa and Aganagic-Klemm-Vafa \cite{AV00, AKV02} constructed the B-model \emph{Hori-Vafa mirror} to $\X$, which is a family of non-compact Calabi-Yau 3-folds equipped with a superpotential obtained as a period integral of the holomorphic volume form. An Aganagic-Vafa brane $\L$ in $\X$ is then mirror to a family of 2-cycles in the Hori-Vafa mirror. They computed the superpotential by solving the equation for the \emph{mirror curve}, which is an affine curve in $(\C^*)^2$ and can viewed as a B-model equivalent to the Hori-Vafa mirror through genus-zero dimensional reduction (see \cite[Section 4]{FLZ19} and the references therein). In addition, they conjectured that the superpotential agrees with the disk potential $F^{\X, (\L, f)}$ up to a mirror transform, which can be interpreted as an agreement between $F^{\X, (\L, f)}$ and a local integral of a 1-form on the mirror curve.

The above mirror conjecture \cite{AV00, AKV02} was first studied by Graber-Zaslow \cite{GZ02} for $K_{\P^2}$ and proven by Fang-Liu \cite{FL13}. Fang-Liu-Tseng \cite{FLT19} generalized the notion of Aganagic-Vafa branes and this mirror theorem for disks to the orbifold setting. The mirror curve can also serve as the B-model in studying the mirror symmetry of higher-genus open-closed Gromov-Witten invariants. The Remodeling Conjecture of Bouchard-Klemm-Mari\~no-Pasquetti \cite{BKMP09, BKMP10} asserts that the all-genus open-closed Gromov-Witten potentials of $(\X, \L)$ can be identified with integrals of the forms obtained from the Eynard-Orantin topological recursion \cite{EO07} with the mirror curve as the input. This is proven by Eynard-Orantin \cite{EO15} in the smooth case and by Fang-Liu-Zong \cite{FLZ19} in the orbifold case.

The mirror theorem for disks \cite{FLT19} is central to our formulation and proof of the Open Crepant Transformation Conjecture. The key observation is that $K$-equivalent toric Calabi-Yau 3-orbifolds have isomorphic mirror curves for generic choice of parameters. Thus one may construct a global family of mirror curves over the B-model moduli space obtained from charts of the secondary variety. Fang \cite{Fang19} detailed this idea for the example of $K_{\P^2} \to [\C^3/\Z_3]$. For a pair $\X_\pm$ of toric Calabi-Yau 3-orbifolds that differ by a single wall-crossing in the secondary fan, we construct an \emph{open} B-model moduli space that extends the construction of \cite{CIJ18} in the closed setting by incorporating the open moduli parameter. The defining equations of the mirror curves of $\X_\pm$ then fit into a global family of equations over the open B-model moduli, and by the disk mirror theorem, the disk potentials of $\X_\pm$ are mirror to local solutions to the global mirror curve equation near distinguished points. The desired identification of disk potentials is then achieved via analytic continuation on these local solutions.


\subsection{Organization of the paper}
We start in Section \ref{sect:setup} with a review of the geometry and combinatorics of toric Calabi-Yau 3-orbifolds and Aganagic-Vafa branes in them. Moreover, we recall the definitions of the disk invariants and potentials, given by \cite{FLT19}. In Section \ref{sect:CrepantDisk}, we discuss the different cases of toric wall-crossings in detail and precisely formulate the Open Crepant Transformation Conjecture (Theorem \ref{thm:OCTC}), to be proven in the remainder of the paper. As the first step, in Section \ref{sect:MCMSDisk}, we define mirror curves and study the structure of mirror curve equations in detail. In particular, we show that $K$-equivalent toric Calabi-Yau 3-orbifolds have isomorphic mirror curves for generic choices of parameters. This sets ground for the construction of the open B-model moduli space and the global mirror curve equation in Section \ref{sect:GlobalMirrorCurve}. We then finish the proof with a careful analysis of analytic continuation on local solutions and monodromies.

\subsection{Acknowledgments}
I would like to thank Chiu-Chu Melissa Liu for suggesting this project and offering unreserved support all along. I also thank Hiroshi Iritani for explaining the monodromy argument in \cite{CCIT09}, which inspires our treatment of monodromies of solutions to the mirror curve equation, and Bohan Fang and Zhengyu Zong for helpful discussions and suggestions. Finally, I thank the anonymous referees for carefully reviewing this work and providing helpful feedback.

\section{Disk Invariants of Toric Calabi-Yau 3-Orbifolds}\label{sect:setup}
In this section, we introduce the geometric setup and the disk invariants, mostly following \cite{FLT19}. Along the way, we also introduce notations to be used throughout. We work over $\C$.

\subsection{Toric Calabi-Yau 3-orbifolds}\label{sect:ToricDMStacks}
Let $\X$ be a 3-dimensional \emph{toric orbifold}, i.e. a 3-dimensional smooth toric Deligne-Mumford stack \cite{BCS05, FMN10} whose generic stablizer is trivial. Combinatorially, $\X$ is specified by a triple $(N, \Sigma(\X), \beta)$, where $N \cong \Z^3$, $\Sigma = \Sigma(\X)$ is a simplicial fan in $N_\R = N \otimes \R$, and $\beta$ is a group homomorphism
\[  \beta:\tilde{N} = \bigoplus_{i = 1}^R \Z \tilde{b}_i \to N, \qquad \tilde{b}_i \mapsto b_i \]
that satisfies:
\begin{itemize}
\item The set of 1-dimensional cones in $\Sigma$ is $\{\rho_i \mid i \in I_K\}$ for some $I_K = I_K(\X) \subseteq \{1, \dots, R\}$, where $\rho_i = \R_{\ge 0} b_i$.
\item The vectors in $\{b_i \mid i \in I_K\}$ generate a subgroup of finite index in $N$.
\item The vectors $b_1, \dots, b_R$ are distinct, belong to $|\Sigma|$, and together generate $N$ over $\Z$. In particular, $\beta$ is surjective.
\end{itemize}
The triple $(N,\Sigma, \beta)$ is an extended stacky fan (in the sense of \cite{Jiang08}) whose underlying stacky fan (in the sense of \cite{BCS05}) is $(N, \Sigma, \beta|_{\bigoplus_{i \in I_K} \Z\tilde{b}_i})$.
There is a short exact sequence of free abelian groups:
\begin{equation}\label{eq:StackyFanSq}
\xymatrix{
0 \ar[r] & \Lat \ar[r]^\psi & \tilde{N} \ar[r]^\beta & N \ar[r] & 0,
}
\end{equation}
where $\Lat$ has rank $k = R-3$. We set $I_\orb = I_\orb(\X) = \{1, \dots, R\} \setminus I_K$.

Applying $-\otimes \C^*$ to the sequence (\ref{eq:StackyFanSq}), we obtain a short exact sequence of complex tori:
\begin{equation}\label{eq:ToriSq}
\xymatrix{
0 \ar[r] & G \ar[r] & \tilde{T} \ar[r] & T \ar[r] & 0.
}
\end{equation}
$\tilde{T}$ naturally acts on $\C^R = \tilde{N} \otimes \C$, and the inclusion $G \to \tilde{T}$ induces a $G$-action on $\tilde{T}$ that extends to $\C^R$. Then $\X$ is geometrically represented as a quotient stack $[U/G]$ for some dense open subset $U \subseteq \C^R$ (see \cite{Jiang08}). The DM torus of $\X$ is the quotient stack $\T=[\tilde{T}/G]$, and the coarse moduli space $X$ of $\X$ is the simplicial toric 3-fold defined by the fan $\Sigma$, which contains the torus $T$.

\begin{assumption}\label{assump:Assump} \rm{
In this paper, we make the following assumptions on $\X$:
\begin{itemize}
\item $\X$ is \emph{Calabi-Yau}: There is a basis $\{v_1, v_2, v_3\}$ of $N$ with dual basis $\{u_1, u_2, u_3\}$ of $M = \Hom(N, \Z)$ such that $\inner{b_i, u_3} = 1$ for all $i = 1, \dots, R$, where $\inner{-,-}:N \times M \to \Z$ is the natural pairing extended linearly over $\R$.
\item The coarse moduli space $X$ is \emph{semi-projective}: $X$ is projective over $\Spec H^0(X, \O_X)$.
\end{itemize}
}\end{assumption}
By \cite[Proposition 14.4.1]{CLS11}, $X$ is semi-projective if and only if $|\Sigma| = \sum_{i = 1}^R \R_{\ge 0} b_i$. Thus our assumptions imply that all maximal cones in $\Sigma$ are 3-dimensional. The Calabi-Yau condition implies that all the $b_i$'s lie on the hyperplane $N' = \{ v \in N_\R \mid \inner{v, u_3} = 1\}$. Then $\Delta = |\Sigma| \cap N'$ is a convex lattice polyhedron and $b_1, \dots, b_R$ is a complete list of lattice points in $\Delta$. Note that $\Sigma$ induces a plane graph supported on $\Delta$, where the intersection of $\Delta$ and the support of the 1-, 2-, and 3-dimensional cones give rise to the vertices, edges, and bounded faces of the graph respectively. This graph induces a regular triangulation of $\Delta$ in the sense of \cite[Section 15.2]{CLS11}. The dual plane graph is called the toric graph of $\X$.

\begin{example}[$A_n$ singularities] \label{ex:An} \rm{
For $n \in \Z_{>0}$, $\A_n = [\C^2/\Z_{n+1}] \times \C$ is a toric Calabi-Yau 3-orbifold. Its fan consists of one 3-dimensional cone $\sigma$ spanned by
\[  (1,0,1), \qquad (0, n+1, 1), \qquad (0, 0, 1), \]
which lie on the affine hyperplane $N'$ consisting of vectors whose last coordinate is $1$. See Figure \ref{fig:AnOuter} for the induced polygon and toric graph. To obtain the extended stacky fan, we choose extra vectors $(0,i, 1)$ for $i = 1, \dots, n$.
\begin{figure}[htb]
  \[
      \begin{tikzpicture}[scale=.8]
          \node[label=right:{$(1,0,1)$}] at (1,0) {$\bullet$};
          \node[label=left:{$(0,0,1)$}] at (0,0) {$\bullet$};
          \node at (0,1) {$\circ$};
          \node at (0,2.5) {$\circ$};
          \node[label=left:{$(0,n+1,1)$}] at (0,3.5) {$\bullet$};
          \node[label=left:{$\tau$}] at (0, 1.75) {};
          \node at (0.4,0.5) {$\sigma$};
          \draw (1,0) -- (0,0) -- (0,3.5) -- (1,0);

          \node at (7,2) {$\bullet$};
          \node[label=right:{$\V(\sigma)$}] at (7,1.8) {};
          \draw (7, 0) -- (7,2) -- (10,3.5);
          \draw[decorate, decoration=snake] (7,2) -- (4,2);
          \node[label=above:{$\V(\tau)$}] at (5.5,2) {};
      \end{tikzpicture}
  \]
  \caption{The stacky fan and the toric graph of $\A_n$. There is one torus fixed point $\V(\sigma)$ and three torus invariant curves. The curve $\V(\tau)$ has nontrivial generic stablizer group.}
  \label{fig:AnOuter}
\end{figure}
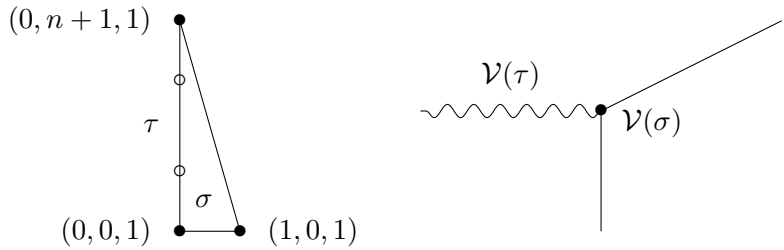

Moreover, $\A_n' = [\O_{\P^1}(-1) \oplus \O_{\P^1}(-1)/\Z_{n+1}]$ is a toric Calabi-Yau 3-orbifold, which can be formed by gluing together two copies of $\A_n$. The map $\beta$ in the extended stacky fan is given by the matrix
\[   \begin{bmatrix}
        1 & 0 & 0 & -1 & 0 & 0 & \cdots & 0\\
        0 & n+1 & 0 & 0 & 1 & 2 & \cdots & n\\
        1 & 1 & 1 & 1 & 1 & 1 & \cdots & 1
      \end{bmatrix}.  \]
See Figure \ref{fig:AnInner} for the induced polygon and toric graph.
\begin{figure}[htb]
  \[
      \begin{tikzpicture}[scale=.8]
          \node[label=right:{$b_1$}] at (1,0) {$\bullet$};
          \node[label=left:{$b_4 = (-1,0,1)$}] at (-1,0) {$\bullet$};
          \node[label=below:{$b_3$}] at (0,0) {$\bullet$};
          \node at (0,1) {$\circ$};
          \node at (0,2.5) {$\circ$};
          \node[label=left:{$b_2$}] at (0,3.5) {$\bullet$};
          \node at (-0.2, 1.75) {$\tau$};
          \node at (0.4,0.5) {$\sigma$};
          \node at (-0.4,0.5) {$\sigma'$};
          \draw (1,0) -- (0,0) -- (0,3.5) -- (1,0);
          \draw (0,0) -- (-1,0) -- (0, 3.5);

          \node at (8,2) {$\bullet$};
          \node[label=right:{$\V(\sigma)$}] at (8,1.8) {};
          \node at (6,2) {$\bullet$};
          \node[label=left:{$\V(\sigma')$}] at (6,1.8) {};
          \draw (8, 0) -- (8,2) -- (11,3.5);
          \draw (6, 0) -- (6,2) -- (3,3.5);
          \draw[decorate, decoration=snake] (8,2) -- (6,2);
          \node[label=above:{$\V(\tau)$}] at (7,2) {};
      \end{tikzpicture}
  \]
  \caption{The stacky fan and the toric graph of $\A_n'$.}
  \label{fig:AnInner}
\end{figure}
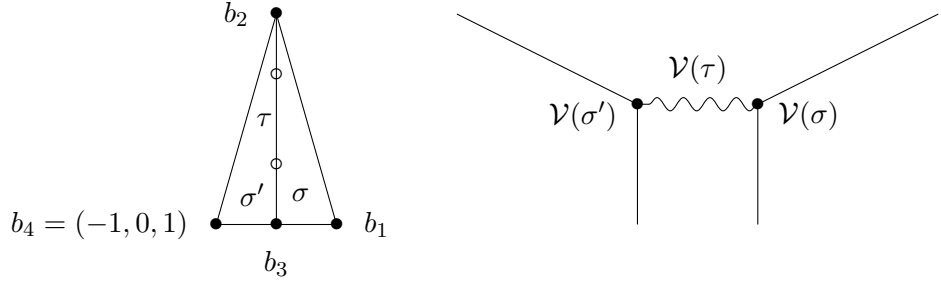
}\end{example}

\begin{example} \label{ex:KP2} \rm{
$K_{\P^2}$ is a crepant resolution of the toric Calabi-Yau 3-orbifold $[\C^3/\Z_3]$. See Figure \ref{fig:KP2} for the induced polygon triangulations.
\begin{figure}[htb]
  \[
      \begin{tikzpicture}[scale=.8]
          \node[label=right:{$b_4$}] at (3,-1) {$\bullet$};
          \node[label=left:{$b_3 = (0,0,1)$}] at (0,0) {$\bullet$};
          \node[label=left:{$b_2 = (0,1,1)$}] at (0,1) {$\bullet$};
          \node at (1,0) {$\bullet$};
          \node at (1.8,0.5) {$b_1$};
          \node at (1.5, -2) {$K_{\P^2}$};
          \draw (1,0) -- (0,0) -- (0,1) -- (1,0) -- (3,-1);
          \draw (0,0) -- (3,-1) -- (0,1);

          \draw[->] (3.5,0.5) -- (5, 0.5);

          \node[label=right:{$b_4 = (3,-1,1)$}] at (10,-1) {$\bullet$};
          \node[label=left:{$b_3$}] at (7,0) {$\bullet$};
          \node[label=left:{$b_2$}] at (7,1) {$\bullet$};
          \node at (8,0) {$\circ$};
          \node at (9.5,0.5) {$b_1 = (1,0,1)$};
          \node at (8.5, -2) {$[\C^3/\Z_3]$};
          \draw (7,0) -- (10,-1) -- (7,1) -- (7,0);
      \end{tikzpicture}
  \]
  \caption{The stacky fans of $K_{\P^2}$ and $[\C^3/\Z_3]$.}
  \label{fig:KP2}
\end{figure}
}\end{example}

\subsection{Torus orbits, stablizers, and flags}\label{sect:Orbits}
For each $d = 1, 2, 3$, let $\Sigma(d)$ denote the set of cones in $\Sigma$ with dimension $i$. Then $\Sigma(1) = \{\rho_i \mid i \in I_K\}$. Let $\sigma \in \Sigma(d)$. Denote $\V(\sigma)$ as the $(3-d)$-dimensional $\T$-invariant closed substack of $\X$ and $G_\sigma \le G$ as the generic stablizer group of $\V(\sigma)$, which is a finite group. Moreover, denote
\[  I_\sigma := \{i \in I_K \mid \rho_i \not \subset \sigma\} \cup I_\orb, \qquad I_\sigma' := \{i \in I_K \mid \rho_i \subset \sigma\}. \]
Then $|I_\sigma'| = d$ and $|I_\sigma| = R-d$.

A \emph{flag} of $\Sigma$ is a pair $(\tau, \sigma) \in \Sigma(2) \times \Sigma(3)$ with $\tau \subset \sigma$. Let $F(\Sigma)$ denote the set of all flags of $\Sigma$. Given a flag $(\tau, \sigma)$, $\V(\sigma)$ is a $\T$-fixed point contained in the $\T$-invariant line $\V(\tau)$. In the toric graph, $\V(\sigma)$ corresponds to a vertex and $\V(\tau)$ corresponds to an incident edge. Moreover, $G_\tau$ is a cyclic subgroup of $G_\sigma$. Denote
\[  \ell(\tau, \sigma) := |G_\tau|, \qquad r(\tau, \sigma) := [G_\sigma: G_\tau]. \]
There is a short exact sequence of finite groups
\[
\xymatrix{
0 \ar[r] & G_\tau \ar[r] & G_\sigma \ar[r] & \mu_{r(\tau,\sigma)} \ar[r] & 0,
}
\]
where $\mu_r$ denotes the cyclic group of $r$-th roots of unities. We have $\V(\sigma) \cong \B G_\sigma$. Moreover, the coarse moduli space of $\V(\tau)$ is either $\C$, in the case $\sigma$ is the unique 3-dimensional cone containing $\tau$, or $\P^1$, in the case $\tau$ is contained in a 3-dimensional cone other than $\sigma$.


\begin{example} \label{ex:AnStab} \rm{
For $\A_n$ or $\A_n'$ as in Example \ref{ex:An}, we have $G_\sigma \cong G_\tau \cong N/\inner{b_1, b_2, b_3} \cong \Z_{n+1}$. In the case of $\A_n$ (Figure \ref{fig:AnOuter}), $\V(\tau) \cong \B\mu_{n+1} \times \C$; in the case of $\A_n'$ (Figure \ref{fig:AnInner}), $\V(\tau) \cong \B\mu_{n+1} \times \P^1$.
}\end{example}

To a flag $(\tau, \sigma) \in F(\Sigma)$, we associate a basis $\{v_1(\tau, \sigma), v_2(\tau, \sigma), v_3(\tau, \sigma)\}$ of $N$ as follows: Suppose $I_\sigma' = \{i_1(\tau, \sigma),i_2(\tau, \sigma),i_3(\tau, \sigma)\}$ and $I_\tau' = \{i_2(\tau, \sigma),i_3(\tau, \sigma)\}$, where $b_{i_1(\tau, \sigma)}, b_{i_2(\tau, \sigma)}, b_{i_3(\tau, \sigma)}$ appear in counterclockwise order on the hyperplane $N'$. Then we take $\{v_1(\tau, \sigma), v_2(\tau,\sigma), v_3(\tau, \sigma)\}$ to be the unique basis such that
\[ \begin{cases}
      b_{i_1(\tau, \sigma)} = r(\tau,\sigma) v_1(\tau, \sigma) -s(\tau, \sigma) v_2(\tau, \sigma) + v_3(\tau, \sigma),\\
      b_{i_2(\tau, \sigma)} = \ell(\tau, \sigma)v_2(\tau, \sigma) + v_3(\tau, \sigma),\\
      b_{i_3(\tau, \sigma)} = v_3(\tau, \sigma),
    \end{cases}
 \]
for some $s(\tau, \sigma) \in \{0, \dots, r(\tau, \sigma)-1\}$. For each $i= 1, \dots, R$, let $m_i(\tau, \sigma), n_i(\tau, \sigma) \in \Z$ such that
\[  b_i = m_i(\tau, \sigma) v_1(\tau, \sigma) + n_i(\tau, \sigma) v_2(\tau, \sigma) + v_3(\tau, \sigma). \]
Finally, let $\{u_1(\tau, \sigma), u_2(\tau,\sigma), u_3(\tau, \sigma)\}$ be the basis of $M$ dual to $\{v_1(\tau, \sigma), v_2(\tau, \sigma), v_3(\tau, \sigma)\}$.

Given two flags in $F(\Sigma)$, the change-of-basis matrix between the two associated bases of $N$ has form
\[  \begin{bmatrix}
      \star & \star & \star\\
      \star & \star & \star\\
      0 & 0 & 1
    \end{bmatrix} \in \rm{SL}(3;\Z).
\]
In particular, the vector $u_3 \in M$ does not vary as the flag changes. One checks that $N' = \{ v \in N_R \mid \inner{v, u_3(\tau, \sigma)} = 1\}$ for any flag $(\tau, \sigma)$.

\subsection{Extended Nef cone}\label{sect:NefCone}
Applying $\Hom(-, \Z)$ to the sequence (\ref{eq:StackyFanSq}), we obtain a short exact sequence of free abelian groups:
\begin{equation}\label{eq:DualStackyFanSq}
\xymatrix{
0 \ar[r] & M \ar[r]^{\beta^\vee} & \tilde{M} \ar[r]^{\psi^\vee} & \Lat^\vee \ar[r] & 0.
}
\end{equation}
Note that $M, \tilde{M}$, and $\Lat^\vee$ are the character lattices of $T, \tilde{T}$, and $G$ respectively. Let $\{\tilde{b}_1^\vee, \dots, \tilde{b}_R^\vee\}$ be the basis of $\tilde{M}$ dual to $\{\tilde{b}_1, \dots, \tilde{b}_R\}$. For $i = 1, \dots, R$, define
\[  D_i := \psi^\vee (\tilde{b}_i^\vee) \in \Lat^\vee.  \]
There are identifications
\begin{equation}\label{eq:LIdentification}
    \Lat^\vee \bigg/ \bigoplus_{i \in I_\orb} \Z D_i \cong \Pic(\X) \cong H^2(\X;\Z)
\end{equation}
under which for each $i \in I_K$, the image of $D_i$ in the quotient is identified with the class of the divisor $\V(\rho_i)$. Over $\R$, (\ref{eq:LIdentification}) yields splittings:
\[  \Lat^\vee_\R = \Lat^\vee \otimes \R \cong (\Pic(\X) \otimes \R) \oplus \bigoplus_{i \in I_\orb} \R D_i \cong H^2(\X; \R) \oplus \bigoplus_{i \in I_\orb} \R D_i. \]

For each $\sigma \in \Sigma(3)$, define
\[  \enef_\sigma := \sum_{i \in I_\sigma} \R_{\ge 0} D_i,   \]
which is a simplicial cone in $\Lat^\vee_\R$. The \emph{extended Nef cone} of $\X$ is defined as
\[  \enef(\X) := \bigcap_{\sigma \in \Sigma(3)} \enef_\sigma,  \]
which is a top-dimensional convex polyhedral cone in $\Lat^\vee_\R$. Observe that the cone $\sum_{i \in I_\orb} \R_{\ge 0}D_i$ is a face of each $\enef_\sigma$ and is thus a face of $\enef(\X)$. With respect to the splittings above, $\enef(\X)$ is spanned by the ordinary Nef cone of $\X$ and $\sum_{i \in I_\orb} \R_{\ge 0}D_i$.

$\enef(\X)$ is referred to as the GKZ cone of $\X$ in \cite[Section 14.4]{CLS11} since it is the cone in the \emph{secondary fan} that corresponds to $\X$. From the viewpoint of GIT, if we consider the action of $G$ on $\C^R$ defined in Section \ref{sect:ToricDMStacks} and pick a character $\theta \in \Lat^\vee$ of $G$ lying in the relative interior of $\enef(\X)$, then the open subset $U \subseteq \C^R$ in Section \ref{sect:ToricDMStacks} is the semistable locus of $\theta$, and $\X$ is the GIT quotient stack $[\C^R \sslash_\theta G]$ (see for example \cite[Section 14.3]{CLS11} and \cite{FJR18}). Choosing another stability condition in the secondary fan gives rise to a toric Calabi-Yau 3-orbifold that differs from $\X$ by a toric crepant transformation. We will return to this perspective in Section \ref{sect:GLSM}.

\begin{example}\label{ex:A1SecondaryFan} \rm{
For $\A_1$, the maps $\beta$ and $\psi$ in (\ref{eq:StackyFanSq}) are given by matrices
\[  \begin{bmatrix}
        1 & 0 & 0 & 0\\
        0 & 1 & 0 & 2\\
        1 & 1 & 1 & 1
      \end{bmatrix}, \qquad
      \begin{bmatrix}
        0 \\ -2 \\ 1 \\ 1
      \end{bmatrix}.    \]
The secondary fan is the complete fan on the line $\Lat^\vee_\R = \R$, and $\enef(\A_1)$ is the ray generated by $D_2 = -2$. The other ray, generated by $D_3 = D_4 = 1$, corresponds to the crepant resolution $K_{\P^1} \oplus \O_{\P^1}$ of $\A_1$. See Figure \ref{fig:A1SecondaryFan}. The secondary variety is the weighted projective line $\P(1,2)$. Similarly, the secondary variety of $[\C^3/\Z_3]$ and $K_{\P^2}$ is $\P(1,3)$.
\begin{figure}[htb]
  \[
      \begin{tikzpicture}[scale=.8]
          \draw[<->] (-5,0) -- (5,0);
          \node[label=above:{$0 = D_1$}] at (0,0) {$|$};
          \node at (0,0) {$\bullet$};
          \node[label=below:{$-2 = D_2$}] at (-2,0) {$\bullet$};
          \node at (1,0) {$\bullet$};
          \node[label=below:{$1 = D_3 = D_4$}] at (2,0) {};
          \draw (-2,1) -- (-3,1) -- (-3, 3) -- (-2,1);
          \draw (2,2) -- (3,1) -- (2,1) -- (2, 3) -- (3,1);
      \end{tikzpicture}
  \]
  \caption{The secondary fan and extended Nef cone of $\A_1$.}
  \label{fig:A1SecondaryFan}
\end{figure}
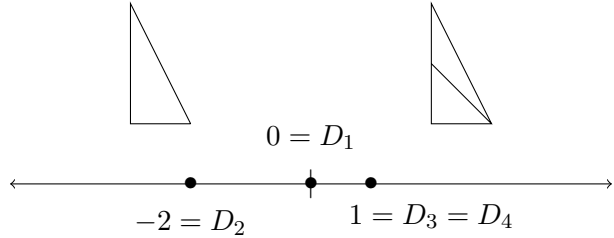
}\end{example}

\subsection{Framed Aganagic-Vafa branes}\label{sect:AVBranes}
From the Hamiltonian action of the maximal compact subgroup of $G$ on $\C^R$, $\X$ can be constructed as a symplectic quotient and endowed with a symplectic structure (see \cite[Section 2.6]{FLT19}). The boundary condition of the disk invariants we consider is a pair $(\L,f)$, where $\L$ is a Lagrangian suborbifold of $\X$ of Aganagic-Vafa type and $f \in \Z$ is the \emph{framing} of $\L$. Aganagic-Vafa branes were introduced in \cite{AV00} for smooth toric Calabi-Yau 3-folds and defined in \cite{FLT19} for toric Calabi-Yau 3-orbifolds in general.

A key property of $\L$ is that it intersects a unique 1-dimensional $\T$-orbit $\V(\tau)$ in $\X$, where $\tau \in \Sigma(2)$. Let $L$ denote the coarse moduli space of $\L$. If the coarse moduli space $V(\tau)$ of $\V(\tau)$ is $\C$, we say that $\L$ is an \emph{outer brane}. In this case, there is a unique 3-dimensional cone $\sigma \in \Sigma(3)$ that contains $\tau$, and we associate to $\L$ the flag $(\tau, \sigma)$. Moreover, $L \cap V(\tau)$ bounds a disk $D$ in $V(\tau)$. On the other hand, if $V(\tau)$ is $\P^1$, we say that $\L$ is an \emph{inner brane}. In this case, there are two 3-dimensional cones $\sigma,\sigma' \in \Sigma(3)$ that contain $\tau$, and we associate to $\L$ the flags $(\tau, \sigma), (\tau, \sigma')$. Moreover, $L \cap V(\tau)$ bounds two disks $D, D'$ centered at the $T$-fixed points $V(\sigma), V(\sigma')$ respectively. See Figure \ref{fig:AVBranes} for an illustration of Aganagic-Vafa branes in the example of $\A_1'$.

The order of the generic stablizer group of $\L$ is equal to that of $G_\tau$, which is $\ell = \ell(\tau, \sigma)$. We have $H_1(\L;\Z) \cong \Z \times G_\tau$. We say that $\L$ is \emph{effective} if $\ell = 1$, and \emph{ineffective} if $\ell>1$.

Viewing $M$ as the lattice of characters of the 3-dimensional torus $T$, we denote $T' :=\ker(u_3) \cong (\C^*)^2$ as the 2-dimensional Calabi-Yau subtorus of $T$ and $T'_\R \cong U(1)^2$ as the maximal compact subgroup of $T'$. Then $T'_\R$ acts holomorphically on $\X$ and preserves $\L$. The toric graph of $\X$ is the image of the $0$- and $1$-dimensional $\T$-orbits under the moment map given by the $T'_\R$ action. The image of $\L$ under the moment map is an interior point on the edge which is the image of $\V(\tau)$. Furthermore, the framing $f$ determines a 1-dimensional framing subtorus $T'_f := \ker(u_1 - fu_2, u_3)$ of $T'$.

\begin{figure}[htb]
  \[
      \begin{tikzpicture}[scale=.7]
          \draw (1.5,0) -- (0,0) -- (0,3) -- (1.5,0);
          \draw (0,0) -- (-1.5,0) -- (0, 3);
          \draw[ultra thick] (0.2,1.4) -- (-0.2,1.4);
          \node at (-0.4, 0.9) {$\L_1$};
          \draw[ultra thick] (0.57,1.36) -- (0.93,1.54);
          \node at (1.4, 1.6) {$\L_2$};
          \draw (1, -3) -- (1,-2) -- (2.5,-1.2);
          \draw (-1, -3) -- (-1,-2) -- (-2.5,-1.2);
          \draw[decorate, decoration=snake] (-1,-2) -- (1,-2);
          \node at (0,-2) {$\bullet$};
          \node at (0, -1.2) {$\L_1$};
          \node at (1.75, -1.6) {$\bullet$};
          \node at (1.5, -1) {$\L_2$};
      \end{tikzpicture} \qquad
      \begin{tikzpicture}[scale=.85]
          \coordinate (1) at (-3,0);
          \coordinate (2) at (3,0);
          \node at (1) {$\bullet$};
          \node at (2) {$\bullet$};
          \draw (1) to[bend right] (2);
          \draw (1) to[bend left] (2);
          \draw[thick] (0, 0.86) to[bend left] (0,-0.86);
          \draw[dashed, thick] (0, 0.86) to[bend right] (0,-0.86);
          \draw[dashed] (-0.7, 3) -- (0.7,2) -- (0.7, -3) -- (-0.7, -2) -- (-0.7, 3);
          \node at (-2.5, 0.8) {$V(\sigma')$};
          \node at (2.5, 0.8) {$V(\sigma)$};
          \node at (0, -1.4) {$V(\tau_1)$};
          \node at (-0.3, 2.1) {$L_1$};
          \node at (1.5, 0) {$D$};
          \node at (-1.5, 0) {$D'$};

          \draw (1) .. controls (-3.7, -1) and (-3.7, -2) .. (-3.7, -3);
          \draw (1) .. controls (-2.3, -1) and (-2.3, -2) .. (-2.3, -3);
          \draw (2) .. controls (3.7, -1) and (3.7, -2) .. (3.7, -3);
          \draw (2) .. controls (2.3, -1) and (2.3, -2) .. (2.3, -3);
          \draw (-3.66, -2) to[bend right] (-2.34, -2);
          \draw[dashed] (-3.66, -2) to[bend left] (-2.34, -2);
          \draw (3.66, -2) to[bend left] (2.34, -2);
          \draw[dashed] (3.66, -2) to[bend right] (2.34, -2);

          \draw (1) .. controls (-3, 1) and (-4.5,2.3) .. (-5, 2.5);
          \draw (1) .. controls (-4, 0) and (-5.5, 0.8) .. (-6, 1.2);
          \draw (2) .. controls (3, 1) and (4.5,2.3) .. (5, 2.5);
          \draw (2) .. controls (4, 0) and (5.5, 0.8) .. (6, 1.2);

          \draw (-4, 1.75) to[bend left] (-5, 0.6);
          \draw[dashed] (-4, 1.75) to[bend right] (-5, 0.6);
          \draw[thick] (4, 1.75) to[bend left] (5, 0.6);
          \draw[dashed, thick] (4, 1.75) to[bend right] (5, 0.6);
          \draw[dashed] (2.3, 2.6) -- (4,3) -- (6.6, -0.1) -- (4.9, -0.5) -- (2.3, 2.6);
          \node at (5.7, 1.9) {$V(\tau_2)$};
          \node at (3,2.4) {$L_2$};
          \node at (3.8, 0.7) {$D$};
        \end{tikzpicture}
  \]
  \caption{Aganagic-Vafa branes in $\A_1'$. $\L_1$ is an inner brane since it intersects the compact torus orbit $\V(\tau_1)$. Moreover, $\L_1$ has nontrivial generic stablizer group equal to $G_{\tau_1} \cong \Z_2$. $\L_2$ is an outer brane since it intersects the noncompact torus orbit $\V(\tau_2)$. At the level of coarse moduli, the intersections of the branes with the torus invariant curves bound disks centered at the torus fixed points.}
  \label{fig:AVBranes}
\end{figure}

\subsection{Disk invariants and potentials}\label{sect:DiskInvariants}
Fang-Liu-Tseng \cite{FLT19} defined and computed open-closed Gromov-Witten invariants for $(\X, (\L,f))$, which are virtual counts of stable maps from bordered orbifold Riemann surfaces to $\X$ with boundary mapping to $\L$. We focus on \emph{disk invariants}, which account for maps from domains with arithmetic genus zero and one boundary component, and the generating functions that package them. We recall the necessary definitions in this section and refer the reader to \cite{FLT19} for a full treatment.


For an effective class $\beta' \in H_2(X,L;\Z)$ and a class $(d, \lambda) \in H_1(\L;\Z) \cong \Z \times G_\tau$, let
\[  \inner{}^{\X, (\L,f)}_{0, \beta', (d, \lambda)} \]
be the disk invariant of $(\X,(\L,f))$ that virtually counts degree $\beta'$ maps with boundary winding number $d$ and monodromy $\lambda$, allowing insertions at interior marked points. The disk invariants are assembled into a generating function $F^{\X, (\L, f)}=F^{\X, (\L, f)}_{0,1}$,
which we refer to as the \emph{A-model disk potential} of $(\X, (\L,f))$. It depends on \emph{closed (K\"ahler) moduli parameters} $Q = (Q_1, \dots, Q_k)$ and an \emph{open moduli parameter} $X$. $F^{\X, (\L, f)}$ takes value in the Chen-Ruan cohomology $H^*_{\CR}(\B G_\tau; \C)$. Let $(\tau, \sigma)$ be the flag associated to $\L$ as in the previous subsection. Then we have an identification
\[   G_\tau \cong N/\inner{v_1(\tau, \sigma), v_3(\tau, \sigma), b_2(\tau, \sigma)} = \{\overline{0}, \overline{v_2(\tau, \sigma)}, \dots, \overline{(\ell-1)v_2(\tau, \sigma)} \}.   \]
Let $\one_j$ be the unit of the cohomology of the inertia component of $\I \B G_\tau$ corresponding to $\overline{j v_2(\tau, \sigma)}$. Then $\{\one_0, \one_1, \cdots, \one_{\ell-1}\}$ is a homogeneous basis of $H^*_{\CR}(\B G_\tau; \C)$. 

In the case where $\L$ is an outer brane, we have
\[ F^{\X, (\L,f)}(Q,X) = \sum_{\substack{\beta \in H_2(X;\Z) \text{ effective}\\ n \ge 0 \\ (d, \lambda) \in H_1(\L;\Z), \text{ } d >0}} \frac{\inner{(\tau_2(Q))^n}^{\X, (\L,f)}_{0, \beta+d[D], (d, \lambda)}}{n!}X^d\xi_\ell^{\ell-\bar{\lambda}} \one_{\bar{\lambda}},   \]
where $\tau_2(Q)$ is a certain equivariant second Chen-Ruan cohomology class of $\X$ that depends on $Q$, $\xi_\ell = \exp(-\frac{\pi \sqrt{-1}}{\ell})$, and $\bar{\lambda} \in \{0, \dots, \ell-1\}$ is such that $\overline{\bar{\lambda}v_2(\tau,\sigma)}$ is identified with $\lambda$. $F^{\X, (\L,f)}(Q,X)$ is a series in $X\Q[[Q,X]]$.

Similarly, in the case where $\L$ is an inner brane, we have
\begin{align*}
    F^{\X, (\L,f)}(Q, q^\alpha, X) = & \sum_{\substack{\beta \in H_2(X;\Z) \text{ effective}\\ n \ge 0 \\ (d, \lambda) \in H_1(\L;\Z), \text{ } d >0}} \frac{\inner{(\tau_2(Q))^n}^{\X, (\L,f)}_{0, \beta+d[D], (d, \lambda)}}{n!}X^d\xi_\ell^{\ell-\bar{\lambda}} \one_{\bar{\lambda}} \\
    & + \sum_{\substack{\beta \in H_2(X;\Z) \text{ effective}\\ n \ge 0 \\ (d, \lambda) \in H_1(\L;\Z), \text{ } d < 0}} \frac{\inner{(\tau_2(Q))^n}^{\X, (\L,f)}_{0, \beta-d[D'], (d, \lambda)}}{n!}(q^\alpha)^{-d}X^d\xi_\ell^{\ell-\bar{\lambda}} \one_{\bar{\lambda}},
\end{align*}
where $q^\alpha$ is an additional parameter related to the symplectic area of $V(\tau)$. $F^{\X, (\L,f)}(Q, q^\alpha, X)$ is the sum of a series in $X\Q[[Q, X]]$ and a series in $q^\alpha X^{-1}\Q[[Q, q^\alpha X^{-1}]]$. We note that the quantity $q^\alpha$ is in fact a monomial in the $B$-model closed moduli parameters $q$ to be defined later. It corresponds to $Q^\alpha$ in \cite{FLT19}. See Remarks \ref{rem:QAlphaFLT} and \ref{rem:LRL} for additional discussions.

In either the outer or the inner case, we can write
\begin{equation}\label{eq:AmodelDiskPotentialDecompose}
    F^{\X, (\L, f)} = F_\ell\one_0 + \sum_{j=1}^{\ell-1} F_j \xi_\ell^{\ell-j} \one_j.
\end{equation}
This decomposition can be viewed as a grouping of disk invariants according to the monodromy of the stable map $f$ at the boundary $\partial \Curve$. Moreover, each term of each $F_j$ contains a nonzero power of $X$ as a factor.

Under the equivariant closed mirror map $Q = Q(q)$ of Coates-Corti-Iritani-Tseng \cite{CCIT15} and an open mirror map $X = X(q,x)$, $F^{\X, (\L, f)}$ pulls back to a series $W^{\X, (\L, f)}(q, x)$, which we refer to as the \emph{B-model disk potential} of $(\X, (\L,f))$. Here, $q = (q_1, \dots, q_k)$ are \emph{closed (complex) moduli parameters}
and $x$ is the \emph{B-model open moduli parameter} (see \cite[Section 4.2]{FLT19}). $W^{\X, (\L, f)}$ also takes value in the Chen-Ruan cohomology $H^*_{\CR}(\B G_\tau; \C)$, and there is a decomposition similar to (\ref{eq:AmodelDiskPotentialDecompose}):
\begin{equation}\label{eq:DiskPotentialDecompose}
    W^{\X, (\L, f)}(q, x) = W_\ell(q,x)\one_0 + \sum_{j=1}^{\ell-1} W_j(q,x) \xi_\ell^{\ell-j} \one_j.
\end{equation}
If $\L$ is outer, each $W_j$ is a series in $x\Q[[q,x]]$; if $\L$ is inner, each $W_j$ is the sum of a series in $x\Q[[q, x]]$ and a series in $x^{-1}\Q[[q, x^{-1}]]$. In particular, each term of each $W_j$ contains a nonzero power of $x$ as a factor. 


The B-model disk potential $W^{\X, (\L, f)}$ has an explicit formula (see \cite[Theorem 4.3]{FLT19}) and relates directly to local integrals on the mirror curves under the mirror theorem for disks, as we shall see in Section \ref{sect:MirrorThmDisk}. In formulating and proving the Open Crepant Transformation Conjecture in the rest of the paper, we will work with the B-model disk potential $W^{\X, (\L, f)}$ and the B-model open-closed moduli parameters $q,x$.

\section{Toric Crepant Transformations and Disk Invariants}\label{sect:CrepantDisk}
Toric crepant transformations can be viewed as wall-crossings in the GKZ secondary fan or in GIT stability conditions. In this section, we describe crepant transformations between toric Calabi-Yau 3-orbifolds and the induced transformations between Aganagic-Vafa branes. We then formulate the Open Crepant Transformation Conjecture that we will prove in subsequent sections.

\subsection{Toric wall-crossing}\label{sect:GLSM}
A toric Calabi-Yau 3-orbifold that differs from $\X$ by a toric crepant transformation is specified by an extended stacky fan $(N, \Sigma', \beta)$ with the same $N$ and $\beta$ as in that of $\X$. Such toric orbifolds are classified by their induced regular triangulations of the polygon $\Delta$, which correspond to top-dimensional cones in the \emph{secondary fan} in $\Lat^\vee_\R$ (see \cite[Proposition 15.2.9]{CLS11}). From the perspective of GIT, different cones in the secondary fan give different stability conditions as we construct these toric orbifolds as GIT quotient stacks of $\C^R$ under the action of $G$ \cite{CLS11, DH98, FJR18, Thaddeus96}.

Our main goal is to study the relations among disk invariants of these toric Calabi-Yau 3-orbifolds. We consider as the basic step a pair of toric Calabi-Yau 3-orbifolds $\X_+$ and $\X_-$ which differ by a single wall-crossing in the secondary fan, i.e. whose extended Nef cones $\enef(\X_+)$ and $\enef(\X_-)$ have a common codimension-one face $W$. Denote $\Sigma_\pm = \Sigma(\X_\pm)$. Wall-crossings of smooth toric Deligne-Mumford stacks in general are classified and studied by Coates-Iritani-Jiang \cite{CIJ18}. In our setting, $\X_+$ and $\X_-$ differ by one of the following two transformations:
\begin{itemize}
  \item A \emph{flop}: There is a birational morphism $\phi: \X_+ \dashrightarrow \X_-$ restricting to an isomorphism between dense open subsets $U_\pm$ in $\X_\pm$, and the exceptional locus $\X_\pm \setminus U_\pm$ is a torus-invariant curve $\V(\tau_\pm^{ex})$ for some $\tau_\pm^{ex} \in \Sigma_\pm(2)$. In this case, $I_K(\X_+) = I_K(\X_-)$, and in the two corresponding triangulations of $\Delta$, the segments $|\tau_\pm^{ex}| \cap \Delta$ are the two diagonals of a common quadrilateral.

  \item A \emph{(partial) resolution}: There is a surjective morphism $\phi:\X_+ \to \X_-$ that contracts a torus-invariant divisor $\V(\rho_{i^{ex}})$ for some $i^{ex} \in \{1, \dots, R\}$ and restricts to an isomorphism from $U_+ = \X_+ \setminus \V(\rho_{i^{ex}})$ to $U_- = \phi(U_+)$. In this case, $I_K(\X_+) = I_K(\X_-) \sqcup \{i^{ex}\}$, and the triangulation of $\Delta$ corresponding to $\X_+$ refines that corresponding to $\X_-$ by a star subdivision centered at $b_{i^{ex}}$.
\end{itemize}

In addition, we need to take into account how Aganagic-Vafa branes transform along with the ambient spaces. Starting with a framed Aganagic-Vafa brane $(\L_-, f_-)$ in $\X_-$, we divide our discussion into three cases based on the location of $\L_-$ relative to the exceptional locus of the crepant transformation $\phi$. We describe these cases below and set up notations along the way. As in Section \ref{sect:AVBranes}, we denote $(\tau_-, \sigma_-)$ as the flag associated to $\L_-$; in the case $\L_-$ is inner, we denote $(\tau_-, \sigma_-')$ as the second associated flag. Let $\ell = |G_{\tau_-}|$.

\subsubsection{Case I: $\Sigma_+$ contains $\tau_-$, $\sigma_-$ (and $\sigma_-'$)}
In this case, $\L_-$ is contained in $U_-$ and induces via the isomorphism $\phi|_{U_+}$ to an Aganagic-Vafa brane $\L_+$ in $\X_+$ with the same associated flag(s). $\L_+$ has framing $f_+$ depending on $f_-$. We denote $\tau = \tau_-$, $\sigma = \sigma_-$ (and $\sigma' = \sigma_-'$). See Figure \ref{fig:WallCrossCaseI}.
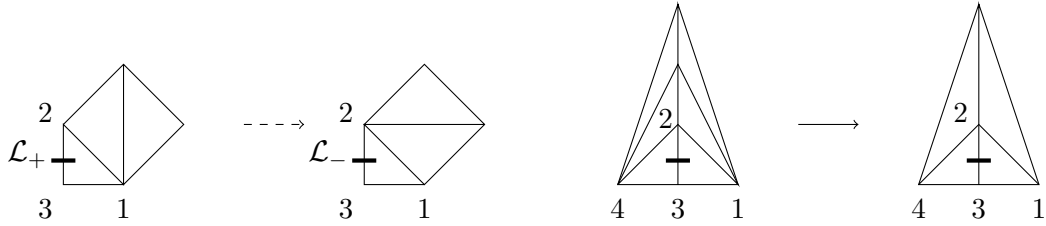
\begin{figure}[htb]
\[  \begin{tikzpicture}[scale=0.8]
    \draw (0,1) -- (1,2) -- (2,1) -- (1,0) -- (0,0) -- (0,1) -- (1,0) -- (1,2);
    \node at (1, -0.4) {$1$};
    \node at (-0.3, 1.2) {$2$};
    \node at (-0.3, -0.4) {$3$};
    \draw[dashed, ->] (3,1) -- (4,1);
    \draw (6,0) -- (5,0) -- (5,1) -- (6,2) -- (7,1) -- (6,0) -- (5,1) -- (7,1);
    \node at (6, -0.4) {$1$};
    \node at (4.7, 1.2) {$2$};
    \node at (4.7, -0.4) {$3$};
    \draw[ultra thick] (-0.2, 0.4) -- (0.2, 0.4);
    \node at (-0.6, 0.5) {$\L_+$};
    \draw[ultra thick] (4.8, 0.4) -- (5.2, 0.4);
    \node at (4.4, 0.5) {$\L_-$};
\end{tikzpicture} \qquad \qquad
\begin{tikzpicture}[scale=0.8]
    \draw (1,0) -- (1,3) -- (2,0) -- (0,0) -- (1,3);
    \draw (0,0) -- (1,1) -- (2,0) -- (1,2) -- (0,0);
    \node at (2, -0.4) {$1$};
    \node at (0.8, 1.1) {$2$};
    \node at (1, -0.4) {$3$};
    \node at (0, -0.4) {$4$};
    \draw[->] (3,1) -- (4,1);
    \draw (6,0) -- (6,3) -- (7,0) -- (5,0) -- (6,3);
    \draw (5,0) -- (6,1) -- (7,0);
    \node at (7, -0.4) {$1$};
    \node at (5.7, 1.2) {$2$};
    \node at (6, -0.4) {$3$};
    \node at (5, -0.4) {$4$};
    \draw[ultra thick] (0.8, 0.4) -- (1.2, 0.4);
    \draw[ultra thick] (5.8, 0.4) -- (6.2, 0.4);
\end{tikzpicture}
 \]
\caption{Illustration of Case I. We adopt the labeling $i_1(\tau, \sigma)=1$, $i_2(\tau, \sigma) =2$, $i_3(\tau, \sigma) = 3$ (and $i_1(\tau, \sigma') = 4$).}
\label{fig:WallCrossCaseI}
\end{figure}

\subsubsection{Case II: $\Sigma_+$ contains $\tau_-$ but not $\sigma_-$}
Geometrically, there are two subcases:
\begin{itemize}
\item Case IIa: $\phi$ is a flop, and $\tau_-^{ex}$ is a face of $\sigma_-$ other than $\tau_-$.

\item Case IIb: $\phi$ (partially) resolves the singularity at $\V(\sigma_-)$ in $\X_-$ by a star subdivision centered at some $b_{i^{ex}} \in |\sigma_-| \setminus |\tau_-|$.
\end{itemize}
In either subcase, $\L_-$ is contained in $U_-$ and induces via the isomorphism $\phi|_{U_+}$ an Aganagic-Vafa brane $\L_+$ in $\X_+$ that intersects $\V(\tau_-)$. $\L_+$ has framing $f_+$ depending on $f_-$. We denote $\tau = \tau_-$, and let $\sigma_+ \in \Sigma_+(3)$ be the cone spanned by $\tau, \tau_+^{ex}$ in Case IIa and by $\tau, \rho_{i^{ex}}$ in Case IIb.
If $\L_-$ is outer, $\L_+$ is also outer and has associated flag $(\tau, \sigma_+)$. If $\L_-$ is inner, $\L_+$ is also inner and has associated flags $(\tau, \sigma_+)$ and $(\tau, \sigma')$; in this ease we denote $\sigma'= \sigma_-'$. See Figure \ref{fig:WallCrossCaseII}.
\begin{figure}[htb]
  \[  \begin{tikzpicture}[scale=0.8]
      \draw (0,0) -- (0,1) -- (1,1) -- (1,0) -- (0,0) -- (1,1);
      \node at (1.3, -0.4) {$4$};
      \node at (1.3, 1.4) {$1$};
      \node at (-0.3, 1.4) {$2$};
      \node at (-0.3, -0.4) {$3$};
      \draw[dashed, ->] (2,0.5) -- (3,0.5);
      \draw (4.5,1) -- (5.5,1) -- (5.5,0) -- (4.5,0) -- (4.5,1) -- (5.5,0);
      \node at (5.8, -0.4) {$4$};
      \node at (5.8, 1.4) {$1$};
      \node at (4.2, 1.4) {$2$};
      \node at (4.2, -0.4) {$3$};
      \draw[ultra thick] (-0.2, 0.6) -- (0.2, 0.6);
      \node at (-0.6, 0.5) {$\L_+$};
      \draw[ultra thick] (4.3, 0.4) -- (4.7, 0.4);
      \node at (3.9, 0.5) {$\L_-$};
  \end{tikzpicture} \qquad \qquad
  \begin{tikzpicture}[scale=0.8]
      \draw (0,2) -- (-1,2) -- (0,0) -- (2,0) -- (0,2) -- (0,0) -- (1,1);
      \node at (-1.3, 2.3) {$5$};
      \node at (1.3, 1.3) {$1$};
      \node at (0.3, 2.3) {$2$};
      \node at (0, -0.4) {$3$};
      \node at (2, -0.4) {$4$};
      \draw[->] (3,1) -- (4,1);
      \draw (5.5,0) -- (5.5,2) -- (7.5,0) -- (5.5,0) -- (4.5,2) -- (5.5,2);
      \node at (4.2, 2.3) {$5$};
      \node at (5.8, 2.3) {$2$};
      \node at (5.5, -0.4) {$3$};
      \node at (7.5, -0.4) {$4$};
      \draw[ultra thick] (-0.2, 1) -- (0.2, 1);
      \draw[ultra thick] (5.3, 1) -- (5.7, 1);
  \end{tikzpicture}
   \]
\caption{Illustration of Cases IIa and IIb. We adopt the labeling $i_2(\tau, \sigma_\pm) = 2$, $i_3(\tau, \sigma_\pm) = 3$, $i_1(\tau, \sigma_+) = 1$, $i_1(\tau, \sigma_-) = 4$ (and $i_1(\tau, \sigma') = 5$).}
\label{fig:WallCrossCaseII}
\end{figure}
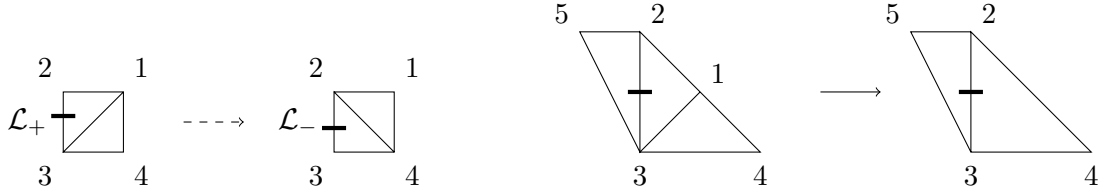

\begin{example} \rm{
The crepant resolution $K_{\P^2} \to [\C^3/\Z_3]$ (given in Example \ref{ex:KP2}), with a pair of corresponding Aganagic-Vafa branes, is another instance of Case IIb.
}\end{example}

\begin{remark} \rm{
If $\L_-$ is inner, there is a symmetric case where $\Sigma_+$ contains $\tau_-$ but not $\sigma_-'$. This is equivalent to Case II after interchanging the labels $\sigma_-$ and $\sigma_-'$.
}\end{remark}

\subsubsection{Case III: $\Sigma_+$ does not contain $\tau_-$ and $\phi$ is a (partial) resolution}
In this case, $\V(\tau_-)$ is ineffective, and $\phi$ (partially) resolves $\V(\tau_-)$ by a star subdivision centered at some $b_{i^{ex}} \in |\tau_-|$. Then $\rho_{i^{ex}}$ subdivides $\tau_-$ into two cones $\tau_+^1, \tau_+^2 \in \Sigma_+(2)$, and the preimage of $\L$ under $\phi$ consists of two Aganagic-Vafa branes $\L_+^1$ and $\L_+^2$ that intersect $\V(\tau_+^1)$ and $\V(\tau_+^2)$ respectively. $\L_+^1$ has framing $f_+$ and $\L_+^2$ has framing $f_{+,2}$, both depending on $f_-$. We set $\ell_j = |G_{\tau_+^j}|$. Then we have $\ell_1 + \ell_2 = \ell$.

The ray $\rho_{i^{ex}}$ also subdivides $\sigma_-$ (resp. $\sigma_-'$ when $\L_-$ is inner) into two cones $\sigma_+^1, \sigma_+^2 \in \Sigma_+(3)$ (resp. $\sigma_+^{1, \prime}, \sigma_+^{2,\prime} \in \Sigma_+(3)$) that contain $\tau_+^1, \tau_+^2$ respectively. For $j = 1,2$, if $\L_-$ is outer, $\L_+^j$ is also outer and has associated flag $(\tau_+^j, \sigma_+^j)$. If $\L_-$ is inner, $\L_+^j$ is also inner and has associated flags $(\tau_+^j, \sigma_+^j)$ and $(\tau_+^j, \sigma_+^{j,\prime})$. See Figure \ref{fig:WallCrossCaseIII}.

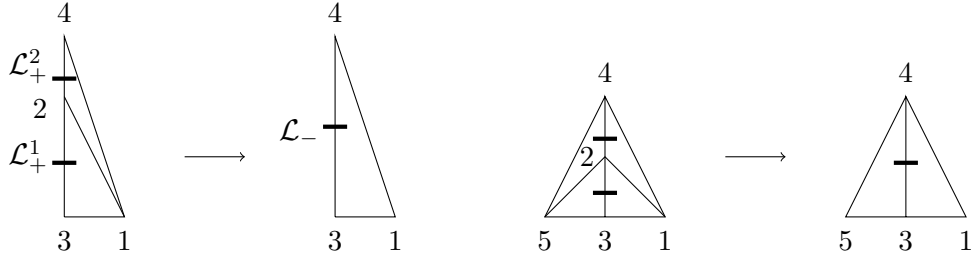
\begin{figure}[htb]
\[
\begin{tikzpicture}[scale=0.8]
    \draw (1,0) -- (1,3) -- (2,0) -- (1,0);
    \draw (1,2) -- (2,0);
    \node at (2, -0.4) {$1$};
    \node at (0.6, 1.8) {$2$};
    \node at (1, -0.4) {$3$};
    \node at (1, 3.4) {$4$};
    \draw[->] (3,1) -- (4,1);
    \draw (5.5,0) -- (5.5,3) -- (6.5,0) -- (5.5,0);
    \node at (6.5, -0.4) {$1$};
    \node at (5.5, -0.4) {$3$};
    \node at (5.5, 3.4) {$4$};
    \draw[ultra thick] (0.8, 2.3) -- (1.2, 2.3);
    \node at (0.4, 2.5) {$\L_+^2$};
    \draw[ultra thick] (0.8, 0.9) -- (1.2, 0.9);
    \node at (0.4, 1) {$\L_+^1$};
    \draw[ultra thick] (5.3, 1.5) -- (5.7, 1.5);
    \node at (4.9, 1.4) {$\L_-$};
\end{tikzpicture}\qquad \qquad
\begin{tikzpicture}[scale=0.8]
    \draw (1,0) -- (1,2) -- (2,0) -- (0,0) -- (1,2);
    \draw (0,0) -- (1,1) -- (2,0);
    \node at (2, -0.4) {$1$};
    \node at (0.7, 1) {$2$};
    \node at (1, -0.4) {$3$};
    \node at (1, 2.4) {$4$};
    \node at (0, -0.4) {$5$};
    \draw[->] (3,1) -- (4,1);
    \draw (6,0) -- (6,2) -- (7,0) -- (5,0) -- (6,2);
    \node at (7, -0.4) {$1$};
    \node at (6, -0.4) {$3$};
    \node at (6, 2.4) {$4$};
    \node at (5, -0.4) {$5$};
    \draw[ultra thick] (0.8, 1.3) -- (1.2, 1.3);
    \draw[ultra thick] (0.8, 0.4) -- (1.2, 0.4);
    \draw[ultra thick] (5.8, 0.9) -- (6.2, 0.9);
\end{tikzpicture}
\]
\caption{Illustration of Case III through the examples of $\A_1'$ and $\A_2$. We adopt the labeling $i_2(\tau_+^1, \sigma_+^1) = i_3(\tau_+^2, \sigma_+^2) = i^{ex} = 2$, $i_3(\tau_+^1, \sigma_+^1) = i_3(\tau_-, \sigma_-) = 3$, $i_2(\tau_+^2, \sigma_+^2) = i_2(\tau_-, \sigma_-) = 4$, $i_1(\tau_-, \sigma_-) = 1$ (and $i_1(\tau_-, \sigma_-')= 5$).}
\label{fig:WallCrossCaseIII}
\end{figure}

\begin{remark} \rm{
We do not consider the case where $\Sigma_+$ does not contain $\tau_-$ and $\phi$ is a flop ($\tau_- = \tau_-^{ex}$), since in this situation $\L_-$ does not naturally induce any Aganagic-Vafa brane(s) in $\X_+$.
}\end{remark}

\subsection{Open Crepant Transformation Conjecture: statement}\label{sect:OCTC}
We can now formulate the Open Crepant Transformation Conjecture for the pair of toric Calabi-Yau 3-orbifolds $\X_+$ and $\X_-$. Observe that in Cases I or II, both disk potentials $W^{\X_+, (\L_+, f_+)}$ and $W^{\X_-, (\L_-, f_-)}$ take value in $H^*_{\CR}(\B \mu_\ell; \C)$ and have $\ell$ components. As in (\ref{eq:DiskPotentialDecompose}), we write:
\[      W^{\X_+, (\L_+, f_+)}(q_+, x_+) = W_\ell^+(q_+,x_+)\one_0 + \sum_{j=1}^{\ell-1} W_j^+(q_+,x_+) \xi_\ell^{\ell-j} \one_j, \]
\[      W^{\X_-, (\L_-, f_-)}(q_-, x_-) = W_\ell^-(q_-,x_-)\one_0 + \sum_{j=1}^{\ell-1} W_j^-(q_-,x_-) \xi_\ell^{\ell-j} \one_j. \]
In Case III, $W^{\X_-, (\L_-, f_-)}$ takes value in $H^*_{\CR}(\B \mu_\ell; \C)$ and decomposes into $\ell$ components as above. Moreover, $W^{\X_+, (\L_+^1, f_+)}$ (resp. $W^{\X_+, (\L_+^2, f_{+,2})}$) takes value in $H^*_{\CR}(\B \mu_{\ell_1}; \C)$ (resp. $H^*_{\CR}(\B \mu_{\ell_2}; \C)$) and has $\ell_1$ (resp. $\ell_2$) components. There are $\ell_1 +\ell_2 = \ell$ components in total. We write:
\[      W^{\X_+, (\L_+^1, f_+)}(q_+, x_+) = W_{\ell_1}^+(q_+,x_+)\one_0 + \sum_{j=1}^{\ell_1-1} W_j^+(q_+,x_+) \xi_{\ell_1}^{\ell_1-j} \one_j, \]
\[      W^{\X_+, (\L_+^2, f_{+,2})}(q_+, x_{+,2}) = W_\ell^+(q_+,x_{+,2})\one_0 + \sum_{j=1}^{\ell_2-1} W_{\ell_1+j}^+(q_+,x_{+,2}) \xi_{\ell_2}^{\ell_2-j} \one_j. \]

Given $m \in \Z_{>0}$, let $\omega_m = \exp(\frac{2\pi\sqrt{-1}}{m})$ be the primitive $m$-th root of unity, and let $U_m$ denote the following $m$-by-$m$ matrix whose entries are $m$-th roots of unity:
\begin{equation}\label{eq:MatrixUm}
  U_m = [\omega_m^{-ij}]_{i,j}= \begin{bmatrix}
        \omega_m^{-1} & \omega_m^{-2} & \cdots & \omega_m & 1\\
        \omega_m^{-2} & \omega_m^{-4} & \cdots & \omega_m^2 & 1\\
        \vdots & \vdots & \ddots & \vdots & \vdots\\
        \omega_m & \omega_m^2 & \cdots & \omega_m^{m-1} & 1\\
        1 & 1 & \cdots & 1 & 1
  \end{bmatrix}.
\end{equation}

\begin{theorem}\label{thm:OCTC}
There is an identification of disk invariants
\begin{equation}\label{eq:OCTCThmCase12}
  \begin{bmatrix}   W_1^+\\ W_2^+ \\ \vdots \\ W_\ell^+ \end{bmatrix} = \begin{bmatrix}   W_1^-\\ W_2^- \\ \vdots \\ W_\ell^- \end{bmatrix}
\end{equation}
in Cases I or II, and an identification
\begin{equation}\label{eq:OCTCThmCase3}
  \diag(U_{\ell_1}, U_{\ell_2})^{-1}\begin{bmatrix}   \xi_{\ell_1}^{\ell_1-1}W_1^+\\ \vdots \\ \xi_{\ell_1} W_{\ell_1-1}^+ \\ W_{\ell_1}^+ \\ \xi_{\ell_2}^{\ell_2-1} W_{\ell_1+1}^+ \\ \vdots \\ W_\ell^+ \end{bmatrix} = U_\ell^{-1}\begin{bmatrix} \xi_\ell^{\ell-1} W_1^- \\ \vdots \\ \xi_\ell W_{\ell-1} \\ W_\ell^- \end{bmatrix}
\end{equation}
in Case III, where $\diag(U_{\ell_1}, U_{\ell_2})$ is the block diagonal matrix with blocks $U_{\ell_1}, U_{\ell_2}$, under relations (\ref{eq:Qchange}, \ref{eq:XYFchange}, \ref{eq:XYFchangeCase3Plus}) between open-closed moduli parameters $(q_\pm, x_\pm)$ and framings, and analytic continuation.
\end{theorem}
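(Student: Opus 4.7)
The strategy is to reduce the identifications \eqref{eq:OCTCThmCase12} and \eqref{eq:OCTCThmCase3} to statements about local solutions of a single global equation on an open B-model moduli space, via the mirror theorem for disks. As a first step, I would use the explicit formula of Fang-Liu-Tseng (cited just before Section \ref{sect:CrepantDisk}) to write each component $W_j^{\X,(\L,f)}$ as a local integral of a $1$-form on the mirror curve $C_\X$ of $\X$, expanded around a distinguished point of $C_\X$ attached to the flag $(\tau,\sigma)$ associated to $\L$. The decomposition \eqref{eq:DiskPotentialDecompose} of $W^{\X,(\L,f)}$ into $\ell$ components then corresponds to the $\ell$ branches of the local coordinate that diagonalize the $\mu_\ell$-action coming from the generic stabilizer of $\L$.

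Next I would carry out the plan sketched in Sections \ref{sect:MCMSDisk}--\ref{sect:GlobalMirrorCurve}: construct a global family of mirror curves over an open B-model moduli space obtained by gluing the charts corresponding to $\enef(\X_+)$ and $\enef(\X_-)$ across their common codimension-one wall $W$, and adjoining the open parameter $x$. The key input is that $K$-equivalent toric Calabi-Yau $3$-orbifolds have isomorphic mirror curves for generic choices of parameters (announced in Section \ref{sect:MCMSDisk}), so the mirror curve equations of $\X_\pm$ fit into a single global equation. The change-of-variables relations \eqref{eq:Qchange}, \eqref{eq:XYFchange}, and \eqref{eq:XYFchangeCase3Plus} between $(q_+,x_+)$ and $(q_-,x_-)$ (together with the induced framing relations) are exactly those making this global equation well-defined.

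With the global equation in hand, the disk potentials of $\X_\pm$ are identified with local solutions expanded around two distinguished points $P_\pm$ lying in the respective large-radius charts. In Cases I and II, the brane $\L_-$ lifts to a single brane $\L_+$ with the same stabilizer $\mu_\ell$, so the two distinguished points give two expansions of the same solution and the $\ell$ components match term-by-term, yielding \eqref{eq:OCTCThmCase12}. In Case III, the resolution along an ineffective brane produces two effective-or-lower-order branes with stabilizers $\mu_{\ell_1}$ and $\mu_{\ell_2}$, and the local solutions near $P_+$ correspond to two sets of $\ell_1$ and $\ell_2$ branches while those near $P_-$ form a single set of $\ell$ branches. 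The transition between these bases of branches is precisely the composition of the discrete Fourier transforms diagonalizing the cyclic monodromies, which produces the matrices $\diag(U_{\ell_1},U_{\ell_2})$ and $U_\ell$ appearing in \eqref{eq:OCTCThmCase3}.

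The main obstacle will be the analytic continuation and monodromy analysis connecting the local solutions at $P_+$ to those at $P_-$. The disk potentials encode boundary winding numbers and monodromies at the boundary component $\partial\Curve$, and one must verify that the regrouping of branches induced by analytic continuation of the global mirror curve equation matches the change of monodromy eigenbasis under the splitting $\mu_\ell \rightsquigarrow \mu_{\ell_1} \times \mu_{\ell_2}$. I would follow the strategy of Coates-Corti-Iritani-Tseng, adapted by exploiting the genus-zero nature of the mirror curve so that monodromies can be computed explicitly in the local uniformizers. Once the monodromy action is identified with the cyclic permutation whose diagonalizing matrix is $U_m$ as in \eqref{eq:MatrixUm}, the identification \eqref{eq:OCTCThmCase3} follows, and the statement for the full disk potentials (rather than just individual components) is recovered by reassembling the $\xi_\ell$-twisted basis of $H^*_\CR(\B\mu_\ell;\C)$.
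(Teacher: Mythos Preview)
Your overall strategy is correct and matches the paper's: build the open B-model moduli space $\tilde{\M}$, use Proposition \ref{prop:MirrorcurveWallCross} to globalize the mirror curve equation, apply the disk mirror theorem (Theorem \ref{thm:MirrorThmDisk}) at both large-radius limit points $P_\pm$, and analytically continue the local solutions $\kappa_j^\pm$ along a path between them. Two points, however, deserve correction.

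First, the matrices $U_\ell$ and $\diag(U_{\ell_1},U_{\ell_2})$ do not arise from a monodromy computation you perform; they are already present in the statement of Theorem \ref{thm:MirrorThmDisk}, which reads $(x\partial_x)^2[\xi_\ell^{\ell-j}W_j] = U_\ell\,(x\partial_x)[\log\kappa_j]$. In Cases I and II the same $U_\ell$ appears on both sides and cancels, giving \eqref{eq:OCTCThmCase12}; in Case III one applies the theorem separately to $(\L_+^1,f_+)$ and $(\L_+^2,f_{+,2})$ (via Lemma \ref{lem:MirrorcurveIdCase3Plus}), producing $\diag(U_{\ell_1},U_{\ell_2})$ on the $+$ side against $U_\ell$ on the $-$ side, whence \eqref{eq:OCTCThmCase3}. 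You should also note that the identification is first obtained for $(x\partial_x)^2 W_j^\pm$ and then lifted to $W_j^\pm$ using that every term of $W_j^\pm$ carries a nonzero power of $x_\pm$, together with $x_+\partial_{x_+}=x_-\partial_{x_-}$.

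Second, your claim that in Cases I and II ``the $\ell$ components match term-by-term'' hides the real issue: analytic continuation of $(\kappa_1^+,\dots,\kappa_\ell^+)$ along an arbitrary path yields only \emph{some} permutation of $(\kappa_1^-,\dots,\kappa_\ell^-)$, and the same ambiguity occurs in Case III. The paper resolves this not by exploiting the genus of the mirror curve but by restricting to the coordinate subspace $\tilde{\M}_{-,0}$ on which $H_-$ reduces to $1+q_{-,a_1}y_-+\cdots+q_{-,a_{\ell-1}}y_-^{\ell-1}+y_-^\ell=0$, and observing (Lemma \ref{lem:Monodromy}) that the map sending a tuple of roots to its elementary symmetric functions is a connected $S_\ell$-cover. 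Hence \emph{every} permutation is realized by some loop based at $P_-$, so one may choose the continuation path to produce the identity permutation. This is the argument attributed to Iritani and already used in \cite{CCIT09}; the genus-zero property of the mirror curve plays no role.
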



\begin{remark}\label{rem:KeZhou} \rm{
Ke-Zhou \cite{KZ15} studied the Open Crepant Transformation Conjecture in the particular situation where $\L_\pm$ are outer and $\X_+$ is a maximal resolution of $\X_-$ with respect to $(\X_-, \V(\tau_-))$, i.e. $\tau= \tau_-$ is still a cone in $\Sigma_+(2)$ and $\X_+$ is singular only along $\V(\tau)$. In particular, if $\L_\pm$ are effective, then $\X_+$ completely resolves the singularities of $\X_-$. The resolution $\X_+ \to \X_-$ is a composition of partial resolutions in our Cases I and IIb. \cite{KZ15} gave an explicit identification of the disk potentials $W^{\X_+, (\L_+, f_+)}$ and $W^{\X_-, (\L_-, f_-)}$ using the formula given by an earlier version of \cite{FLT19}. Their identification and framing relations given in Theorem 4.2 differ slightly from ours (\ref{eq:OCTCThmCase12}, \ref{eq:XYFchange}), which results from a constant-factor difference between the disk potential formula in \cite{FLT19} and that in the earlier version. In general, especially in Case III, the disk potentials cannot be directly identified and analytic continuation is required.
}\end{remark}

\begin{example} \label{ex:A1A2OCTC} \rm{
Consider the crepant resolution $\X_+ = K_{\P^1} \oplus \O_{\P^1} \to \A_1 = \X_-$ (see Example \ref{ex:A1SecondaryFan}). Let $(\L_-, f)$ be a framed Aganagic-Vafa brane intersecting the torus orbit corresponding to the cone spanned by $b_3$ and $b_4$, and let $\L_+^1$, $\L_+^2$ be Aganagic-Vafa branes as defined in the previous subsection. Then from Case III of Theorem \ref{thm:OCTC} we have
\[  \begin{bmatrix}
      W^{\X_+, (\L_+^1, f)} \\ W^{\X_+, (\L_+^2, f+1)}
    \end{bmatrix} =
    \begin{bmatrix}
      -1 & 1 \\ 1 & 1
    \end{bmatrix}^{-1} \begin{bmatrix}
      \sqrt{-1}W_1^{\X_-, (\L_-, f)} \\
      W_2^{\X_-, (\L_-,f)}
    \end{bmatrix}.
\]
As a second example, let $\X_- = \A_2$ and $\X_+ \to \X_-$ be the partial resolution shown in Figure \ref{fig:WallCrossCaseIII}. Pick a framing $f \in \Z$ for $\L_-$. Then from Case III of Theorem \ref{thm:OCTC} we have
\[  \begin{bmatrix}
    -1 & 1 & 0\\ 1 & 1 & 0\\ 0 & 0 & 1
  \end{bmatrix}^{-1}
  \begin{bmatrix}
      \sqrt{-1} W_1^{\X_+, (\L_+^1, f)} \\ W_2^{\X_+, (\L_+^1, f)} \\ W^{\X_+, (\L_+^2, f+2)}
    \end{bmatrix} =
    \begin{bmatrix}
      \omega_3^2 & \omega_3 & 1 \\ \omega_3 & \omega_3^2 & 1\\ 1 & 1 & 1
    \end{bmatrix}^{-1}
    \begin{bmatrix}
      \xi_3^2W_1^{\X_-, (\L_-, f)} \\
      \xi_3W_2^{\X_-, (\L_-, f)} \\
      W_3^{\X_-, (\L_-,f)}
    \end{bmatrix}.
\]
One has similar relations in the case $\X_-$ is $\A_1'$ or $\A_2'$, and $\L_-$ is an inner Aganagic-Vafa brane intersecting the torus invariant curve that has nontrivial generic stablizers.
} \end{example}

\begin{example} \label{ex:AnOCTC} \rm{
Let $\X = \A_n$ (see Example \ref{ex:An}) and $(\L,f)$ be a framed Aganagic-Vafa brane intersecting the torus orbit corresponding to the cone spanned by $(0, 0, 1)$ and $(0,n+1,1)$. Let $\Y$ be the full crepant resolution of $\X$ (given by taking star subdivisions centered at $(0,1,1), \dots, (0,n,1)$). For $j = 1, \dots, n+1$, let $\L^j$ be an Aganagic-Vafa brane in $\Y$ that intersects the torus orbit corresponding to the cone spanned by $(0, j-1, 1)$ and $(0,j, 1)$.
Then using Cases I and III of Theorem \ref{thm:OCTC}, we can show the following relation:
\[  \begin{bmatrix}
      W^{\Y, (\L^1, f)} \\ W^{\Y, (\L^2, f+1)} \\ \vdots \\ W^{\Y, (\L^{n+1}, f+n)}
    \end{bmatrix} =
    U_{n+1}^{-1} \begin{bmatrix}
      \xi_{n+1}^nW_1^{\X, (\L, f)} \\
      \vdots \\
      \xi_{n+1}W_n^{\X, (\L,f)}\\
      W_{n+1}^{\X, (\L,f)}
    \end{bmatrix}.
\]
Brini-Cavalieri-Ross \cite[Theorem 4.2]{BCR17} also related the disk invariants of $\X$ and $\Y$ relative to the Aganagic-Vafa branes above. We remark that our relation differs from theirs by a sign.
}\end{example}

\section{Mirror Curves and Mirror Symmetry for Disks}\label{sect:MCMSDisk}
In preparation for the proof of the Open Crepant Transformation Conjecture (Theorem \ref{thm:OCTC}), we define the mirror curves following \cite{FLT19, FLZ19} and study the structure of the defining equation of mirror curves. In particular, we show that a pair of toric Calabi-Yau 3-orbifolds related by toric wall-crossing have isomorphic mirror curves for generic choices of parameters (Proposition \ref{prop:MirrorcurveWallCross}). We also state the mirror theorem for disk invariants \cite{FLT19} (Theorem \ref{thm:MirrorThmDisk}) to be used later on.

\subsection{Mirror curves}\label{sect:MirrorCurve}
The mirror curve of $\X$ is an affine curve in $(\C^*)^2 = \Spec \C[\tilde{x}^{\pm1}, \tilde{y}^{\pm1}]$ parametrized by closed moduli parameters $q = (q_1, \dots, q_k) \in \C^k$. To give the defining equation, we first choose lattice vectors $p_1, \dots, p_k \in \Lat^\vee \cap \enef(\X)$ such that
\begin{enumerate}[label=(\roman*)]
\item $\{p_1, \dots, p_k\}$ forms a $\Q$-basis for $\Lat^\vee_\Q:= \Lat^\vee \otimes \Q$.
\item There is a subset $A_K = A_K(\X) \subseteq \{1, \dots, k\}$ of indices such that under the identification (\ref{eq:LIdentification}), the image of $\{p_a \mid a \in A_K\}$ forms a $\Q$-basis for $H^2(\X; \Q)$.
\item There is a bijection $\iota: I_\orb \to A_\orb = A_\orb(\X) := \{1, \dots, k\} \setminus A_K$ such that $D_i = p_{\iota(i)}$ for all $i \in I_\orb$. \label{cond:PaOrbChoice}
\item For any $a \in A_K$ and $i \in I_\orb$, $p_a - D_i \not \in \enef(\X)$. \label{cond:PaPrimitive}
\end{enumerate}
We may think of $\{p_a \mid a \in A_K\}$ as K\"ahler parameters of $\X$ and $\{p_a \mid a \in A_\orb\}$ as extended K\"ahler parameters of $\X$ corresponding to the twisted sectors. We remark that condition \ref{cond:PaPrimitive} is not required by \cite{FLT19, FLZ19} but imposed here to simplify the mirror curve equation (see Lemma \ref{lem:MCOrbCoeff}).

For a cone $\sigma \in \Sigma(3)$, since $\Lat^\vee \cap \enef(\X) \subseteq \enef_\sigma = \sum_{i \in I_\sigma} \R_{\ge 0} D_i$, we may write
\begin{equation}\label{eq:PaLinearExpansion}
  p_a = \sum_{i \in I_\sigma}^R s_{ai}^\sigma D_i,
\end{equation}
where each $s_{ai}^\sigma$ is a nonnegative rational number. For $i \in I'_\sigma$, we set $s_{ai}^\sigma = 0$ for all $a$. Now for each $i \in \{1, \dots, R\}$, let
\[  s_i^\sigma(q) := \prod_{a = 1}^k q_a^{s_{ai}^\sigma}. \]
In particular, $s_i^\sigma(q) = 1$ for each $i \in I_\sigma'$. By rescaling the $p_a$'s if necessary, we may assume that $s_{ai}^\sigma$ is a nonnegative integer for each $\sigma, a, i$. This makes $s_i^\sigma(q)$ a monomial in $q$ for each $\sigma, i$.

Let $(\tau, \sigma) \in F(\Sigma)$. For $q = (q_1, \dots, q_k) \in \C^k$, the \emph{mirror curve} $C_q$ of $\X$ is defined by
\begin{equation}\label{eq:MirrorcurveDefFlag}
    H(q,\tilde{x}_{(\tau,\sigma)}, \tilde{y}_{(\tau,\sigma)}) := \sum_{i = 1}^R s_i^\sigma(q)\tilde{x}_{(\tau,\sigma)}^{m_i(\tau,\sigma)}\tilde{y}_{(\tau,\sigma)}^{n_i(\tau, \sigma)} = 0.
\end{equation}

\begin{example}\label{ex:A1MC} \rm{
For $\A_1$ (see Example \ref{ex:A1SecondaryFan}), with $\tau$ as the cone spanned by $b_3$ and $b_4$, $\sigma$ as the cone spanned by $\tau$ and $b_1$, and $p_1 = D_2 = -2$, the mirror curve equation is
\[  H(q, \tilde{x}, \tilde{y}) = \tilde{x} + q_1\tilde{y} + 1 + \tilde{y}^2.  \]
}\end{example}

As mentioned in \cite[Section 4.1]{FLZ19}, choosing a different flag $(\tau', \sigma') \in F(\Sigma)$ gives a reparametrization of $C_q$. The change of basis matrix from $\{v_1(\tau, \sigma), v_2(\tau, \sigma), v_3(\tau, \sigma)\}$ to $\{v_1(\tau', \sigma'), v_2(\tau', \sigma')$, $v_3(\tau', \sigma')\}$ has form
\[  \begin{bmatrix}
      a & c & m_{i_3(\tau, \sigma)}(\tau', \sigma')\\
      b & d & n_{i_3(\tau, \sigma)}(\tau', \sigma')\\
      0 & 0 & 1
    \end{bmatrix} \in \rm{SL}(3;\Z).
\]
Under the reparametrization
\begin{equation}\label{eq:XYFlagChange}
\begin{aligned}
  \tilde{x}_{(\tau, \sigma)} &= \tilde{x}_{(\tau', \sigma')}^a \tilde{y}_{(\tau', \sigma')}^bs_{i_1(\tau,\sigma)}^{\sigma'}(q)^{w_1(\tau, \sigma)}s_{i_2(\tau,\sigma)}^{\sigma'}(q)^{w_2(\tau, \sigma)}s_{i_3(\tau,\sigma)}^{\sigma'}(q)^{w_3(\tau, \sigma)},\\
  \tilde{y}_{(\tau, \sigma)} &= \tilde{x}_{(\tau', \sigma')}^c \tilde{y}_{(\tau', \sigma')}^d \big(s_{i_2(\tau,\sigma)}^{\sigma'}(q) / s_{i_3(\tau,\sigma)}^{\sigma'}(q) \big)^{\frac{1}{\ell(\tau, \sigma)}},
\end{aligned}
\end{equation}
where
\[  w_1(\tau, \sigma) = \frac{1}{r(\tau, \sigma)}, \qquad w_2(\tau, \sigma) = \frac{s(\tau, \sigma)}{r(\tau, \sigma)\ell(\tau, \sigma)}, \qquad w_3(\tau, \sigma) = -w_1(\tau, \sigma)-w_2(\tau, \sigma), \]
we have a term-by-term identification of mirror curve equations
\begin{equation}\label{eq:MirrorcurveFlagChange}
H(q, \tilde{x}_{(\tau', \sigma')}, \tilde{y}_{(\tau', \sigma')}) = s_{i_3(\tau,\sigma)}^{\sigma'}(q)\tilde{x}_{(\tau', \sigma')}^{m_{i_3(\tau,\sigma)}(\tau', \sigma')}\tilde{y}_{(\tau', \sigma')}^{n_{i_3(\tau,\sigma)}(\tau', \sigma')}H(q, \tilde{x}_{(\tau, \sigma)}, \tilde{y}_{(\tau, \sigma)}).
\end{equation}

\begin{example} \label{ex:A2MC} \rm{
Let $\X$ be the partial resolution of $\A_2$ given in Figure \ref{fig:WallCrossCaseIII}. The mirror curve equations of $\X$ with respect to the flags $(\tau_+^1, \sigma_+^1), (\tau_+^2, \sigma_+^2)$ are
\[  H_1(q, \tilde{x}_1, \tilde{y}_1) = \tilde{x}_1 + \tilde{y}_1^2 + 1 + q_1 \tilde{y}_1^3 + q_2 \tilde{y}_1, \quad  H_2(q, \tilde{x}_2, \tilde{y}_2) = \tilde{x}_2 + 1 + q_1^2\tilde{y}_2^{-2} + \tilde{y}_2 + q_1q_2\tilde{y}_2^{-1}.  \]
We have an identification $H_1 = \tilde{y}_1^2H_2$ under the reparametrization
\[  \tilde{x}_2 = \tilde{x}_1\tilde{y}_1^{-2}, \qquad \tilde{y}_2 = \tilde{y}_1q_1.  \]
}\end{example}

By condition \ref{cond:PaOrbChoice}, for each $i \in I_\orb$ and $\sigma \in \Sigma(3)$, we have $s_{\iota(i)i}^\sigma = 1$ and $s_{\iota(i)i'}^\sigma = 0$ for any $i' \neq i$. In other words, $q_{\iota(i)}$ appears only in the monomial $s_{i}^\sigma$ and with exponent 1. Combining this with condition \ref{cond:PaPrimitive} and the identification (\ref{eq:MirrorcurveFlagChange}), we have the following simple characterization of the monomial $s_i^\sigma$'s for $i \in I_\orb$:

\begin{lemma}\label{lem:MCOrbCoeff}
Let $i \in I_\orb$ and $\sigma \in \Sigma(3)$ such that $b_i \in |\sigma|$. Then
\[  s_i^\sigma(q) = q_{\iota(i)}.  \]
In general, assume that $I'_{\sigma} = \{i_1, i_2, i_3\}$, and write
\[  b_i = c_1b_{i_1} + c_2b_{i_2} + c_3b_{i_3}  \]
for $c_1, c_2, c_3 \in [0,1]$ with $c_1 + c_2+c_3 = 1$. Then for any $\sigma' \in \Sigma(3)$, we have
\begin{equation}\label{eq:MCOrbCoeffGenral}
  s_i^{\sigma'}(q) = q_{\iota(i)}s_{i_1}^{\sigma'}(q)^{c_1}s_{i_2}^{\sigma'}(q)^{c_2}s_{i_3}^{\sigma'}(q)^{c_3}.
\end{equation}
\end{lemma}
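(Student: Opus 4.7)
The plan is to exploit the linear relation $b_i = c_1 b_{i_1} + c_2 b_{i_2} + c_3 b_{i_3}$ in $N_\R$ by lifting it to an element of $\Lat$. Since $\beta(\tilde{b}_i - \sum_j c_j \tilde{b}_{i_j}) = 0$, the exactness of (\ref{eq:StackyFanSq}) produces a unique $\lambda \in \Lat \otimes \R$ with $\psi(\lambda) = \tilde{b}_i - \sum_j c_j \tilde{b}_{i_j}$. Pairing $\lambda$ against $D_{i'} = \psi^\vee(\tilde{b}_{i'}^\vee)$ computes
\[ \inner{\lambda, D_{i'}} = \inner{\psi(\lambda), \tilde{b}_{i'}^\vee} = \delta_{i, i'} - \sum_{j=1}^3 c_j \delta_{i_j, i'}, \]
so $\inner{\lambda, D_i} = 1$, $\inner{\lambda, D_{i_j}} = -c_j$ for each $j$, and $\inner{\lambda, D_{i'}}$ vanishes otherwise.

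Next I would plug the expansion $p_a = \sum_{i' \in I_{\sigma'}} s_{a, i'}^{\sigma'} D_{i'}$ into this pairing, obtaining for every $\sigma' \in \Sigma(3)$ the identity
\[ \inner{\lambda, p_a} = s_{a, i}^{\sigma'} - \sum_{j=1}^3 c_j s_{a, i_j}^{\sigma'}, \]
with the convention $s_{a, i_j}^{\sigma'} = 0$ when $i_j \in I'_{\sigma'}$. Specializing to $\sigma' = \sigma$, where all three $i_j$ sit in $I'_\sigma$, the sum collapses and $\inner{\lambda, p_a} = s_{a, i}^\sigma$.

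With this setup, Part 1 reduces to verifying $s_{a, i}^\sigma = \delta_{a, \iota(i)}$. For $a \in A_\orb$, condition (iii) gives $p_a = D_{\iota^{-1}(a)}$, which is already in the basis $\{D_{i'} : i' \in I_\sigma\}$ and reads off the answer. For $a \in A_K$, condition (iv) together with the integrality of the $s$'s locates some $\sigma^* \in \Sigma(3)$ with $s_{a, i}^{\sigma^*} = 0$; the pairing identity applied at $\sigma' = \sigma^*$ then gives
\[ s_{a, i}^\sigma = \inner{\lambda, p_a} = -\sum_{j=1}^3 c_j s_{a, i_j}^{\sigma^*} \le 0, \]
which combined with $s_{a, i}^\sigma \ge 0$ forces equality to $0$.

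Part 2 follows immediately: using $\inner{\lambda, p_a} = s_{a, i}^\sigma = \delta_{a, \iota(i)}$ from Part 1, the pairing identity at an arbitrary $\sigma'$ reads $s_{a, i}^{\sigma'} = \delta_{a, \iota(i)} + \sum_j c_j s_{a, i_j}^{\sigma'}$; exponentiating in the $q_a$'s packages this into the multiplicative formula (\ref{eq:MCOrbCoeffGenral}). I expect the main technical point to be the propagation from condition (iv), which only supplies vanishing at some a priori unrelated $\sigma^*$, to the vanishing at the chosen $\sigma$: the $\lambda$-pairing identity is designed precisely for this, and it makes transparent why condition (iv) and the integrality rescaling are both genuinely needed.
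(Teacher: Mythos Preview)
Your proof is correct and takes a genuinely different route from the paper's. The paper argues Part~1 by contradiction using the explicit mirror-curve flag-change identity (\ref{eq:MirrorcurveFlagChange}): assuming $s_{ai}^\sigma>0$ for some $a\in A_K$, it unpacks the reparametrization in coordinates $m_i,n_i,w_j$ to show that $q_a$ then also divides $s_i^{\sigma'}(q)$ for every $\sigma'$, forcing $p_a-D_i\in\enef(\X)$ and contradicting condition~\ref{cond:PaPrimitive}. Part~2 is then read off from the same flag-change relation. By contrast, you bypass the mirror-curve coordinates entirely: lifting the barycentric relation $b_i=\sum_j c_j b_{i_j}$ to a class $\lambda\in\Lat_\R$ gives a single cone-independent number $\langle\lambda,p_a\rangle$, which equals $s_{ai}^{\sigma'}-\sum_j c_j s_{a,i_j}^{\sigma'}$ for every $\sigma'$ and collapses to $s_{ai}^\sigma$ at $\sigma'=\sigma$. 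Condition~\ref{cond:PaPrimitive} plus integrality then locate a $\sigma^*$ with $s_{ai}^{\sigma^*}=0$, and the sign comparison finishes both parts at once. Your argument is cleaner linear algebra and makes the role of condition~\ref{cond:PaPrimitive} and the integrality rescaling more transparent; the paper's version has the virtue of being phrased in the same flag-change language used later in Section~\ref{sect:MirrorcurveWallCross}, but at the cost of carrying the explicit coordinate formulas through the computation.
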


\begin{proof}
Since for each $i' \in I_\orb$ we have $s^\sigma_{\iota(i')i} = 1$ if and only if $i' = i$, we can write
\[  s_i^\sigma(q) = q_{\iota(i)}s(q)  \]
for some monomial $s(q)$ in the variables $\{q_a \mid a \in A_K\}$ only. We need to show that $s(q) = 1$. Suppose otherwise that $s(q)$ contains a factor $q_a$ for some $a \in A_K$. This means that in the expansion (\ref{eq:PaLinearExpansion}) of $p_a$, the coefficient $s_{ai}^\sigma$ of $D_i$ is positive. Thus we see that $p_a - D_i \in \enef_\sigma$. We will show that in fact $p_a - D_i \in \enef_{\sigma'}$ for all $\sigma' \in \Sigma(3)$, thus arriving at a contradiction with condition \ref{cond:PaPrimitive}.

We pick a face $\tau \in \Sigma(2)$ of $\sigma$ and without loss of generality assume that $i_1 = i_1(\tau, \sigma), i_2 = i_2(\tau, \sigma), i_3 = i_3(\tau, \sigma)$. Note that
\[  m_i(\tau,\sigma) = c_1r(\tau, \sigma), \qquad n_i(\tau, \sigma) = -c_1s(\tau, \sigma)+ c_2\ell(\tau, \sigma).  \]
Given any $\sigma' \in \Sigma(3)$ and face $\tau' \in \Sigma(2)$ of $\sigma'$, as part of the identification (\ref{eq:MirrorcurveFlagChange}), we have the relation
\begin{equation}\label{eq:MCOrbCoeffFlagChange}
\begin{aligned}
  s_i^{\sigma'}(q) &= s_i^\sigma(q) s_{i_1}^{\sigma'}(q)^{m_i(\tau,\sigma)w_1(\tau,\sigma)}s_{i_2}^{\sigma'}(q)^{m_i(\tau,\sigma)w_2(\tau,\sigma)+\frac{n_i(\tau,\sigma)}{\ell(\tau,\sigma)}}s_{i_3}^{\sigma'}(q)^{1+m_i(\tau,\sigma)w_3(\tau,\sigma)-\frac{n_i(\tau,\sigma)}{\ell(\tau,\sigma)}}\\
  &= q_{\iota(i)}s(q)s_{i_1}^{\sigma'}(q)^{c_1}s_{i_2}^{\sigma'}(q)^{c_2}s_{i_3}^{\sigma'}(q)^{c_3}.
\end{aligned}
\end{equation}
As a result, $q_a$ also appears as a factor of $s_i^{\sigma'}(q)$. Similar to above, this implies that in the expansion (\ref{eq:PaLinearExpansion}) of $p_a$ with respect to the basis $\{D_{i'} \mid i' \in I_{\sigma'}\}$, the coefficient $s_{ai}^{\sigma'}$ of $D_i$ is positive. Hence $p_a - D_i \in \enef_{\sigma'}$.

Therefore, we conclude that $s(q)=1$. The more general statement (\ref{eq:MCOrbCoeffGenral}) then follows from relation (\ref{eq:MCOrbCoeffFlagChange}).
\end{proof}

\subsection{Incorporating the open moduli parameter}\label{sect:MirrorcurveOpen}
Based on the framed Aganagic-Vafa brane $(\L,f)$, we reparametrize the mirror curve $C_q$ of $\X$ by incorporating the open moduli parameter $x$. As in Section \ref{sect:AVBranes}, if $\L$ is outer, we can associate to it a flag $(\tau, \sigma) \in F(\Sigma)$ such that $\L$ intersects $\V(\tau)$ and $\sigma$ contains $\tau$; similarly, if $\L$ is inner, we can associate to it two flags $(\tau, \sigma), (\tau, \sigma') \in F(\Sigma)$. In either case, we use the basis $\{v_1(\tau, \sigma), v_2(\tau, \sigma), v_3(\tau, \sigma)\}$ of $N$ associated to the flag $(\tau,\sigma)$ and its dual basis $\{u_1(\tau, \sigma), u_2(\tau, \sigma), u_3(\tau, \sigma)\}$ of $M$ as preferred bases of $N$ and $M$ for $(\X, (\L,f))$. For simplicity, throughout this subsection and the next, we denote
\[  v_j = v_j(\tau, \sigma), \quad u_j = u_j(\tau, \sigma), \quad i_j = i_j(\tau, \sigma) \qquad \mbox{for } j = 1,2,3;    \]
\[  r = r(\tau, \sigma), \quad s = s(\tau, \sigma), \quad \ell = \ell(\tau, \sigma);  \]
\[  m_i = m_i(\tau, \sigma), \quad n_i = n_i(\tau, \sigma) \qquad \mbox{for } i = 1, \dots, R. \]
In addition, if $\L$ is inner, we denote $i_4 = i_1(\tau, \sigma')$ and $r' = r(\tau, \sigma')$. Note that $r' = -m_{i_4} > 0$.


With the parameters
\[  s_{ai} = s_{ai}^\sigma, \qquad s_i(q) = s_i^\sigma(q), \qquad  \tilde{x} = \tilde{x}_{(\tau, \sigma)}, \qquad \tilde{y} = \tilde{y}_{(\tau, \sigma)} \]
associated to the preferred flag $(\tau, \sigma)$, the mirror curve equation (\ref{eq:MirrorcurveDefFlag}) becomes
\begin{equation}\label{eq:MirrorcurveDef}
  0 = H(q, \tilde{x}, \tilde{y}) = \tilde{x}^r\tilde{y}^{-s} + \tilde{y}^\ell + 1 + \sum_{i \in I_\sigma} s_i(q)\tilde{x}^{m_i}\tilde{y}^{n_i}.
\end{equation}
Moreover, using the framing $f$, we perform the reparametrization
\begin{equation}\label{eq:MirrorcurveXYReparam}
     \tilde{x} = xy^{-f}, \qquad  \tilde{y}=y
\end{equation}
and turn (\ref{eq:MirrorcurveDef}) into
\begin{equation}\label{eq:MirrorcurveDefOpen}
  0 = H(q, x, y) = x^ry^{-rf-s} + y^\ell + 1 + \sum_{i \in I_\sigma} s_i(q)x^{m_i}y^{-m_if +n_i}.
\end{equation}

Equation (\ref{eq:MirrorcurveDefOpen}) can be rewritten as
\begin{equation}\label{eq:MirrorcurveIncludeX}
  0 = H(q, x, y) = \sum_{i = 1}^R \tilde{s}_i(q,x)y^{-m_if +n_i} = x^ry^{-rf-s} + y^\ell + 1 + \sum_{i \in I_\sigma} \tilde{s}_i(q,x)y^{-m_if +n_i} ,
\end{equation}
where for each $i$,
\[  \tilde{s}_i(q,x) = s_i(q)x^{m_i}.  \]
If $\L$ is outer, we have $m_i \ge 0$ for all $i = 1, \dots, R$. Thus each $\tilde{s}_i(q,x)$ is a monomial in $q, x$. If $\L$ is inner, some of the $m_i$'s are negative, yet we have the following observation on the corresponding $s_i(q)$'s:

\begin{lemma}\label{lem:MCNegCoeff}
Let $\L$ be an inner brane. Recall that $I'_{\sigma'} = \{i_2,i_3,i_4\}$ and $r' = -m_{i_4}$. Then $q^{\alpha} := s_{i_4}(q)^{1/r'}$ is a monomial in $\{q_a \mid a \in A_K\}$. Moreover, for each $i =1, \dots, R$ such that $m_i<0$, $(q^\alpha)^{-m_i}$ divides $s_i(q)$.
\end{lemma}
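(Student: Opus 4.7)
My plan is to derive a single transition identity relating the coefficients $s_{a,i}^\sigma$ in the $I_\sigma$-basis to those in the $I_{\sigma'}$-basis, and to read off both assertions from it. The key input is the relation
\[
\sum_{i=1}^R \langle b_i, u_1 \rangle D_i \;=\; rD_{i_1} + \sum_{i \in I_\sigma} m_i D_i \;=\; 0
\]
in $\Lat^\vee$ coming from applying $u_1 \in M$ to the dual of (\ref{eq:StackyFanSq}), which lets me solve $D_{i_1} = -\tfrac{1}{r} \sum_{i \in I_\sigma} m_i D_i$. Substituting this into the two expansions
\[
p_a \;=\; \sum_{i \in I_\sigma} s_{a,i}^\sigma\, D_i \;=\; s_{a, i_1}^{\sigma'}\, D_{i_1} + \sum_{i \in I_\sigma \cap I_{\sigma'}} s_{a,i}^{\sigma'}\, D_i
\]
and equating coefficients in the $I_\sigma$-basis (using $I_\sigma = \{i_4\} \sqcup (I_\sigma \cap I_{\sigma'})$ and $I_{\sigma'} = \{i_1\} \sqcup (I_\sigma \cap I_{\sigma'})$) yields the transition formulas
\[
s_{a, i_4}^\sigma \;=\; \frac{r'}{r}\, s_{a, i_1}^{\sigma'}, \qquad s_{a, i}^\sigma \;=\; s_{a, i}^{\sigma'} - \frac{m_i}{r}\, s_{a, i_1}^{\sigma'} \quad (i \in I_\sigma \cap I_{\sigma'}),
\]
the second identity also holding at $i = i_4$ under the convention $s_{a, i_4}^{\sigma'} = 0$.

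For the first assertion, I note that $i_4 \in I_K$ (as $\rho_{i_4}$ is a ray of $\sigma'$), so for any $a \in A_\orb$ with $p_a = D_{\iota^{-1}(a)}$ and $\iota^{-1}(a) \in I_\orb \subseteq I_\sigma$, the $I_\sigma$-expansion of $p_a$ consists of the single term $D_{\iota^{-1}(a)}$, giving $s_{a, i_4}^\sigma = 0$. Therefore $s_{i_4}(q) = \prod_{a \in A_K} q_a^{s_{a, i_4}^\sigma}$ involves no $q_a$ with $a \in A_\orb$, and $q^\alpha = s_{i_4}(q)^{1/r'}$ is a monomial in $\{q_a : a \in A_K\}$.

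For the second assertion, the divisibility $(q^\alpha)^{-m_i} \mid s_i(q)$ is equivalent to the exponent inequality $r' s_{a, i}^\sigma + m_i s_{a, i_4}^\sigma \geq 0$ for every $a$. Plugging the two transition formulas into the left-hand side collapses it to
\[
r'\!\left(s_{a, i}^{\sigma'} - \tfrac{m_i}{r} s_{a, i_1}^{\sigma'}\right) + m_i \cdot \tfrac{r'}{r} s_{a, i_1}^{\sigma'} \;=\; r'\, s_{a, i}^{\sigma'},
\]
which is $\geq 0$ since $p_a \in \enef_{\sigma'}$ forces $s_{a, i}^{\sigma'} \geq 0$. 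I expect no substantial obstacle: the only care needed is to treat $i = i_4$ via the convention alongside the generic $i \in I_\sigma \cap I_{\sigma'}$, and to observe that the remaining indices $i \in I'_\sigma$ need not be considered because $m_{i_1} = r > 0$ and $m_{i_2} = m_{i_3} = 0$ place them outside the $m_i < 0$ hypothesis.
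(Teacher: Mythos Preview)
Your argument for the second assertion is correct and is essentially the paper's: your exponent identity $s_{a,i}^\sigma + \tfrac{m_i}{r'}\,s_{a,i_4}^\sigma = s_{a,i}^{\sigma'}$ is exactly the flag-change relation $s_i^{\sigma'}(q)=s_i(q)\,s_{i_1}^{\sigma'}(q)^{m_i/r}$ that the paper extracts from (\ref{eq:MirrorcurveFlagChange}), written coefficient-by-coefficient rather than as a product of $q$-monomials. Both give the factorization $s_i(q)=(q^\alpha)^{-m_i}s_i^{\sigma'}(q)$, from which divisibility is immediate.

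There is, however, a genuine gap in your treatment of the first assertion. In this paper ``monomial'' means a product with \emph{nonnegative integer} exponents (see the sentence just before (\ref{eq:MirrorcurveDefFlag}) introducing the rescaling of the $p_a$'s), and the content of the first claim is precisely that each exponent $s_{a,i_4}^\sigma/r'$ is an integer. You have only shown that $s_{a,i_4}^\sigma=0$ for $a\in A_\orb$, i.e.\ that $q^\alpha$ involves no orbifold variables; this is the easy observation the paper records in one sentence. What you have \emph{not} shown is integrality of $s_{a,i_4}^\sigma/r'$ for $a\in A_K$, and your transition formula $s_{a,i_4}^\sigma/r'=s_{a,i_1}^{\sigma'}/r$ merely trades one nonobvious divisibility for another. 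The paper handles this (in the case $r'\ge 2$) by choosing $i\in I_\orb$ with $b_i\in|\sigma'|$ and $m_i=-1$, then invoking Lemma~\ref{lem:MCOrbCoeff}---which in turn relies on condition~\ref{cond:PaPrimitive} on the $p_a$'s---to get $s_i(q)=q_{\iota(i)}\,s_{i_4}(q)^{1/r'}$; since $s_i(q)$ is a monomial by the standing rescaling assumption, so is $q^\alpha$. Alternatively, as noted in Remark~\ref{rem:QAlphaFLT}, one can argue geometrically that $s_{a,i_4}^\sigma/r'=\langle p_a,\alpha\rangle$ with $\alpha=[\V(\tau)]\in\Lat$ integral. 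Your purely linear-algebraic derivation accesses neither of these inputs, so as written it does not close the integrality gap.
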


\begin{proof}
Note that $s_{i_4}(q)$ is a monomial in $\{q_a \mid a \in A_K\}$. We first check that $q^\alpha$ is a monomial. It suffices to consider the case $r' \ge 2$. Take $i \in I_\orb$ such that $b_i \in |\sigma'|$ and $m_i = -1$. Then we can write
\[  b_i = c_2 b_{i_2} + c_3 b_{i_3} + \frac{1}{r'}b_{i_4} \]
for some $c_2, c_3 \in [0,1]$ adding up to $1- \frac{1}{r'}$. Lemma \ref{lem:MCOrbCoeff} implies that $s_i(q) = q_{\iota(i)}s_{i_4}(q)^{1/r'} = q_{\iota(i)}q^\alpha$. Since $s_i(q)$ is a monomial, $q^\alpha$ is also a monomial.

To prove the rest of the lemma, we observe that under the flag change from $(\tau, \sigma)$ to $(\tau, \sigma')$, the reparametrization (\ref{eq:XYFlagChange}, \ref{eq:MirrorcurveFlagChange}) in particular implies that
\[  s_i^{\sigma'}(q) = s_i(q)s_{i_1}^{\sigma'}(q)^{\frac{m_i}{r}} \]
for each $i = 1, \dots, R$. Taking $i = i_4$, we have $s_{i_1}^{\sigma'}(q) = s_{i_4}(q)^{\frac{r}{r'}} = (q^\alpha)^r$. Therefore for each $i$ such that $m_i<0$, we have the factorization
\[  s_i(q) = s_i^{\sigma'}(q)s_{i_1}^{\sigma'}(q)^{-\frac{m_i}{r}} = (q^\alpha)^{-m_i}s_i^{\sigma'}(q). \qedhere \]
\end{proof}

Hence for each $i$ such that $m_i<0$, $(q^\alpha x^{-1})^{-m_i}$ divides $s_i(q)x^{m_i}$. If we pick $a_0 \in A_K$ such that $q_{a_0}$ divides $q^\alpha$, then each $\tilde{s}_i(q,x)$ is a monomial in $q_1, \dots, q_{a_0-1}, q_{a_0+1}, \dots, q_k, x, q_{a_0}x^{-1}$.

\begin{remark} \label{rem:QAlphaFLT} \rm{
The notation $q^\alpha$ is taken from \cite{FLT19}. Let $\alpha = [\V(\tau)] \in H_2(\X;\Z)$ be the class of the $\T$-invariant curve $\V(\tau)$, which can also be viewed as an effective curve class in the extended Mori cone of $\X$ in $\Lat_\Q= \Lat \otimes \Q$ (see \cite[Section 2.5]{FLT19}). Under the definition of \cite{FLT19},
\[  q^\alpha:= \prod_{a = 1}^k q_a^{\inner{p_a, \alpha}},  \]
where $\inner{-,-}:\Lat^\vee_\Q \times \Lat_\Q \to \Q$ is the natural pairing. Computing the intersection product (see \cite[Section 5.1]{Fulton93}) between the Poincar\'e dual of $\alpha$ and the classes of the divisors $V(\rho_i)$'s, we obtain
\[  \inner{D_i, \alpha} = \begin{cases}
          \frac{1}{r} &\mbox{if } i = i_1,\\
          \frac{1}{r'} &\mbox{if } i = i_4,\\
          0 & \mbox{otherwise}.
        \end{cases} \]
Then for each $a = 1, \dots, k$,
\[  \inner{p_a, \alpha} = \inner{s_{ai_4}D_{i_4}, \alpha} = \frac{s_{ai_4}}{r'}.  \]
This implies that $q^\alpha = s_{i_4}(q)^{1/r'}$, and that our definition for $q^\alpha$ is consistent with \cite{FLT19}. Since $\alpha$ is an integral class and each $p_a \in \Lat^\vee$ is a lattice vector, we have that $\inner{p_a, \alpha} \in \Z_{\ge 0}$ for each $a$ and thus $q^\alpha$ is a monomial in $q$. In fact, the rest of Lemma \ref{lem:MCNegCoeff} can be obtained from the construction of the disk potential $W^{\X, (\L, f)}$ in \cite{FLT19} and the mirror theorem for disks (Theorem \ref{thm:MirrorThmDisk}).
}\end{remark}

\subsection{The mirror theorem for disks}\label{sect:MirrorThmDisk}
In this subsection, we state the mirror theorem of \cite{FLT19} that relates the disk potential of $(\X, (\L,f))$ to the mirror curve $C_q$.

Locally on $C_q$, we may solve for $y$ in the equation (\ref{eq:MirrorcurveIncludeX}) in terms of $q$ and $x$. If $\L$ is outer, in the limit $q \to 0, x \to 0$, there are $\ell$ local solutions $\kappa_1, \dots, \kappa_\ell$ to $H(y)=0$ that satisfy
\[  \log \kappa_j = \frac{\pi\sqrt{-1}}{\ell}(-1+2j) + \frac{V_j(q, x)}{\ell},  \]
where each $V_j$ is a series in $x\C[[q,x]]$. Similarly, if $\L$ is inner, in the limit $q \to 0, x \to 0, q^\alpha x^{-1} \to 0$, there are $\ell$ local solutions $\kappa_1, \dots, \kappa_\ell$ to $H(y)=0$ that satisfy
\[  \log \kappa_j = \frac{\pi\sqrt{-1}}{\ell}(-1+2j) + \frac{V_j(q, x)}{\ell},  \]
where each $V_j$ is the sum of a series in $x\C[[q,x]]$ and a series in $x^{-1}\C[[q,x^{-1}]]$.

\begin{example} \rm{
As in Example \ref{ex:A2MC}, let $\X_+$ be the partial resolution of $\A_2$ given in Figure \ref{fig:WallCrossCaseIII}. With respect to the framed Aganagic-Vafa brane $(\L_+^1, f_+)$, the mirror curve equation is
\[  H(q, x, y) = xy^{-f_+} + y^2 + 1 + q_1y^3 + q_2y = 0.  \]
There are two roots $\kappa_1, \kappa_2$ to $H(y) = 0$ that converge in the limit $q \to 0, x \to 0$. They satisfy
\[  \kappa_1 \to \sqrt{-1}, \quad \log \kappa_1 \to \frac{\pi\sqrt{-1}}{2}; \qquad \kappa_2 \to -\sqrt{-1}, \quad \log \kappa_2 \to \frac{3\pi\sqrt{-1}}{2}.  \]
Moreover, there is a third root $\kappa_3$ that satisfies the asymptotics $\kappa_3 \sim -\frac{1}{q_1}$.
}\end{example}

\begin{remark} \label{rem:LRL} \rm{
The limit $q_a \to 0$ for all $a \in A_K$ is referred to as the \emph{large radius limit} in the literature. For instance, if $\L$ is inner and we take the definition of $q^\alpha$ in \cite{FLT19} (see Remark \ref{rem:QAlphaFLT}), then roughly speaking, $\log |q^\alpha|$ is the negative of the symplectic area of the curve $V(\tau) \subset X$. Taking the limit $q_a \to 0$ for all $a \in A_K$ corresponds to letting the area of $V(\tau)$, and similarly that of any curve in $X$ representing a nonzero effective curve class, tend to infinity. In the framework of \cite{FLT19}, the limit on the open moduli parameter has a similar geometric meaning. Recall from Figure \ref{fig:AVBranes} and Section \ref{sect:DiskInvariants} that $L \cap V(\tau)$ bounds a disk $D$ in $V(\tau)$ if $\L$ is outer, and two disks $D$ and $D'$ if $\L$ is inner. Then taking the limit $x \to 0$ corresponds to letting the area of $D$ tend to infinity. In the case $\L$ is inner, taking the limit $q^\alpha x^{-1} \to 0$ corresponds to letting the area of $D'$ tend to infinity.
}\end{remark}

The mirror theorem for disks can be stated as follows (see \cite[Theorem 4.5]{FLT19}):
\begin{theorem}[Fang-Liu-Tseng \cite{FLT19}]\label{thm:MirrorThmDisk}
The following relation holds:
\begin{equation}\label{eq: MirrorThmDisk}
  \left( x\frac{\partial}{\partial x} \right)^2
  \begin{bmatrix}   \xi_\ell^{\ell-1}W_1\\ \vdots \\ \xi_\ell W_{\ell-1} \\ W_\ell \end{bmatrix}
    = U_\ell \left( x\frac{\partial}{\partial x} \right)
    \begin{bmatrix} \log \kappa_1\\ \vdots \\ \log \kappa_{\ell-1} \\ \log \kappa_\ell \end{bmatrix},
\end{equation}
where $U_\ell$ is the matrix defined in (\ref{eq:MatrixUm}).
\end{theorem}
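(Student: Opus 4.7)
The plan is to match explicit series expansions for both sides at the large-radius/orbifold limit $q \to 0$, $x \to 0$. On the A-model side I would use an explicit closed formula for the B-model disk potential $W^{\X,(\L,f)}$; on the B-model side I would solve the mirror curve equation (\ref{eq:MirrorcurveIncludeX}) by Lagrange inversion. The two expansions will match term-by-term after observing that the linear combination $U_\ell (x\partial_x)(\log \kappa_j)$ is the discrete Fourier transform of the branch data, which is precisely what selects the monodromy-twisted pieces of $W$.

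\emph{Step 1 (A-model side).} I would first establish an explicit hypergeometric-type formula for each component $W_j(q,x)$, obtained by combining the equivariant closed mirror theorem of Coates-Corti-Iritani-Tseng with an orbifold localization of the disk integral at the torus-fixed point $\V(\sigma)$ in the spirit of Katz-Liu and Graber-Zaslow, then restricting the equivariant parameters to the Calabi-Yau subtorus $T'$. By construction, $W_j$ packages only those disk contributions whose boundary monodromy equals $\exp(2\pi\sqrt{-1}\,\bar\lambda/\ell)$ with $\bar\lambda \equiv j \pmod\ell$, so $W_j$ is literally the projection of a single master series onto the $j$-th residue class modulo $\ell$. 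Applying $(x\partial_x)^2$ kills constant and linear terms and yields a clean series in $x$ (respectively Laurent in $x$ and $q^\alpha x^{-1}$ when $\L$ is inner).

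\emph{Step 2 (B-model side).} At $x=0$ equation (\ref{eq:MirrorcurveIncludeX}) reduces to $1+y^\ell=0$, whose $\ell$ roots are labeled so that $\log \kappa_j \to \tfrac{\pi\sqrt{-1}}{\ell}(-1+2j)$. Writing $\kappa_j = \zeta_j\,e^{V_j/\ell}$ with $\zeta_j^\ell = -1$ and expanding $V_j$ as a power series in $q,x$ by Lagrange inversion on $H(q,x,y)=0$, each coefficient is a residue at $y=\zeta_j$ of an explicit polynomial built from the monomials $\tilde s_i(q,x)\,y^{-m_if+n_i}$. After applying $x\partial_x$, these residues become manifestly invariant under cyclic permutation of the branches $\kappa_1,\ldots,\kappa_\ell$ twisted by the appropriate character of $\mu_\ell$.

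\emph{Step 3 (Fourier transform match).} The $\ell$ branches $\kappa_1,\ldots,\kappa_\ell$ differ only through the choice of $\ell$-th root of $-1$; equivalently, a monomial $y^k$ contributes to $\log \kappa_j$ with a factor $\omega_\ell^{jk}$ (up to a branch-independent sign built into $\xi_\ell$). Hence
\[
  \sum_{j=1}^\ell \omega_\ell^{-ij}\,(x\partial_x)\log \kappa_j
\]
projects the Lagrange-inversion series onto monomials $y^k$ with $k \equiv i \pmod \ell$. The explicit A-model formula from Step 1 shows that $(x\partial_x)^2(\xi_\ell^{\ell-i}W_i)$ is precisely that projection, after identifying the winding number $d$ with $k$ via $d \equiv -fk \pmod \ell$ (framing) and using the Calabi-Yau weight conditions to cancel the equivariant factors. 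Matching coefficients in the $(q,x)$ expansion then gives (\ref{eq: MirrorThmDisk}).

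\emph{Main obstacle.} The delicate point is Step 3: verifying that the residue coefficients from Lagrange inversion of $H=0$ agree, up to the $U_\ell$ Fourier matrix, with the disk Hodge integrals at $\V(\sigma)\cong \B G_\sigma$. In the smooth case $\ell=1$ this is the classical open mirror identity of Fang-Liu; in the orbifold case one needs a careful bookkeeping of the ages $s(\tau,\sigma)/(r\ell)$ appearing in the basis change (\ref{eq:XYFlagChange}) and of the twisted-sector contributions in the equivariant $J$-function. Additional care is required in the inner case, where the $y \to 0$ asymptotics of $H=0$ contribute the $q^\alpha x^{-1}$ half of the expansion, and Lemma \ref{lem:MCNegCoeff} is needed to ensure these corrections remain in the expected Laurent ring.
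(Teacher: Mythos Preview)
This theorem is not proved in the present paper: it is quoted verbatim from \cite[Theorem~4.5]{FLT19} and used as a black box (see the sentence immediately preceding the statement, ``The mirror theorem for disks can be stated as follows (see \cite[Theorem 4.5]{FLT19})''). There is therefore no proof in this paper to compare your proposal against.

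That said, your outline is a faithful sketch of how the result is actually established in \cite{FLT19}: one first derives an explicit hypergeometric formula for each component $W_j$ (this is \cite[Theorem~4.3]{FLT19}, obtained by localization at $\V(\sigma)$ combined with the equivariant mirror theorem), then solves $H(q,x,y)=0$ for the $\ell$ branches $\kappa_j$ near the $\ell$-th roots of $-1$, and finally matches the two via the discrete Fourier transform encoded by $U_\ell$. Your identification of the ``main obstacle'' is accurate: the substance of the argument in \cite{FLT19} lies in the orbifold disk Hodge-integral computation and its careful bookkeeping of ages and twisted sectors, which your Step~1 invokes but does not carry out. If you intend to reproduce the proof rather than cite it, that computation is where essentially all the work is; Steps~2 and~3 are then comparatively formal.
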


\subsection{Mirror curves and wall-crossing}\label{sect:MirrorcurveWallCross}
To conclude our discussion on mirror curve equations and set out to prove Theorem \ref{thm:OCTC}, we give an identification of mirror curve equations of a pair of toric Calabi-Yau 3-orbifolds $\X_+$ and $\X_-$ that differ by a single wall-crossing in the secondary fan. We treat the three cases described in Section \ref{sect:GLSM} separately, and use the notation introduced there.

Recall that $W = \enef(\X_+) \cap \enef(\X_-)$ denotes the wall between the extended Nef cones of $\X_\pm$. Following \cite[Section 5.3]{CIJ18}, we choose vectors $p_2, \dots, p_k \in \Lat^\vee \cap W$ that are linearly independent over $\Q$ and vectors $p_1^\pm \in \Lat^\vee \cap (\enef(\X_\pm) \setminus W)$, so that the basis $p^\pm = \{p_1^\pm, p_2, \dots, p_k\}$ satisfies the conditions in Section \ref{sect:MirrorCurve} and can be used to define the mirror curve of $\X_\pm$. We can write $p_1^+$ in the basis $p^-$ as
\begin{equation}\label{eq:POnePlus}
p_1^+ = -c_1p_1^- + c_2p_2 + \cdots + c_kp_k
\end{equation}
for some $c_1 \in \Q_{>0}$, $c_2, \dots, c_k \in \Q$. Based on this expression, we introduce the following relation between closed moduli parameters $q_\pm = (q_{\pm, 1}, \dots, q_{\pm, k})$:
\begin{equation}\label{eq:Qchange}
q_{-,1} = q_{+,1}^{-c_1}, \qquad q_{-,a} = q_{+,a}q_{+,1}^{c_a} \quad \mbox{for $a = 2, \dots, k$}.
\end{equation}


Let $H_+(q_+, x_+, y_+)$ and $H_-(q_-, x_-, y_-)$ denote:
\begin{itemize}
  \item In Case I, the mirror curve equations of $\X_\pm$ (given in (\ref{eq:MirrorcurveDefOpen})) defined with respect to bases $p^\pm$, the flag $(\tau, \sigma)$, and framed Aganagic-Vafa branes $(\L_\pm, f_\pm)$.
  \item In Case II, the mirror curve equations of $\X_\pm$ defined with respect to $p^\pm$, $(\tau, \sigma_\pm)$, and $(\L_\pm, f_\pm)$.
  \item In Case III, the mirror curve equation of $\X_+$ defined with respect to $p^+$, $(\tau_+^1, \sigma_+^1)$, and $(\L_+^1, f_+)$, and the mirror curve equation of $\X_-$ defined with respect to $p^-$, $(\tau_-, \sigma_-)$, and $(\L_-,f_-)$.
\end{itemize}
As in (\ref{eq:MirrorcurveDefOpen}, \ref{eq:MirrorcurveIncludeX}), for $i = 1, \dots, R$, the $i$-th term of $H_\pm(q_\pm, x_\pm, y_\pm)$ has form
\begin{equation}\label{eq:CoeffWallCrossForm}
    s_i^\pm(q_\pm)x_\pm^{m_i^\pm}y_{\pm}^{-m_i^\pm f_{\pm} + n_i^\pm} = \tilde{s}_i^\pm(q_\pm)y_{\pm}^{-m_i^\pm f_{\pm} + n_i^\pm}.
\end{equation}

\begin{proposition}\label{prop:MirrorcurveWallCross}
There is a term-by-term identification
\begin{equation}\label{eq:MirrorcurveWallCross}
    H_+(q_+, x_+, y_+) = H_-(q_-, x_-, y_-)
\end{equation}
under relations (\ref{eq:Qchange}, \ref{eq:XYFchange}) between parameters $(q_\pm, x_\pm, y_\pm, f_\pm)$.
\end{proposition}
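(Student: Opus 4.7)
The plan is to prove the term-by-term identification via the case analysis from Section~\ref{sect:GLSM}, treating Cases~I, II, and III in turn. The uniform strategy is: first use the flag-change reparametrization (\ref{eq:XYFlagChange})--(\ref{eq:MirrorcurveFlagChange}) (combined with the open substitution (\ref{eq:MirrorcurveXYReparam})) to bring $H_+$ and $H_-$ into a common $(x, y)$-frame, reading off the open-parameter and framing relations (\ref{eq:XYFchange}); then identify the remaining $q$-dependent coefficients $s_i^+(q_+) = s_i^-(q_-)$ for every $i$ using the expansion (\ref{eq:POnePlus}) of $p_1^+$ together with (\ref{eq:Qchange}).

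In Case~I, the flag $(\tau, \sigma)$ lies in both $\Sigma_\pm$, so the bases $\{v_j^\pm\}$, the exponents $(m_i, n_i)$, and the framing all coincide, making the flag step trivial. Since $p_a^+ = p_a^-$ for $a \ge 2$, substituting (\ref{eq:POnePlus}) into the expansion (\ref{eq:PaLinearExpansion}) of $p_1^+$ on $\enef_\sigma$ gives
\[ s_{1i}^{+,\sigma} = -c_1 s_{1i}^{-,\sigma} + \sum_{a=2}^k c_a s_{ai}^{-,\sigma}, \]
and plugging in (\ref{eq:Qchange}) immediately yields $s_i^+(q_+) = s_i^-(q_-)$. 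Case~II reduces to Case~I after first applying the $\mathrm{SL}(3;\Z)$ flag change from $(\tau, \sigma_-)$ to $(\tau, \sigma_+)$ on the $\X_-$ side, together with the rescaling formula (\ref{eq:MirrorcurveFlagChange}); the induced open-variable and framing transformations produce the non-trivial part of (\ref{eq:XYFchange}) in this case.

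Case~III requires the most care, since $\Sigma_+$ introduces the new ray $\rho_{i^{ex}}$ with $b_{i^{ex}} \in |\tau_-|$. I would work with the flag $(\tau_+^1, \sigma_+^1)$ on $\X_+$, so that $v_2, v_3$ still agree with those of $(\tau_-, \sigma_-)$ while $v_1$ may differ. The crucial matching is this: on $\X_-$, Lemma~\ref{lem:MCOrbCoeff} gives the $y^{\ell_1}$ term of $H_-$ the coefficient $q_{-, \iota(i^{ex})}$ (since $i^{ex} \in I_\orb(\X_-)$), whereas on $\X_+$ the same monomial appears with coefficient $1$ (since $i^{ex} \in I'_{\sigma_+^1}$). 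Forcing these to agree pins down the direction of $p_1^+$ and hence the form of (\ref{eq:Qchange}); the remaining terms are then identified via the transformation rule (\ref{eq:MCOrbCoeffGenral}) for orbifold monomials under flag changes.

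\textbf{Anticipated obstacle.} The delicate step is Case~III in its full generality: one must verify that the single matching of ``$q_{-,\iota(i^{ex})}$ vs.\ $1$'' forces exactly the change of variables (\ref{eq:Qchange}) that simultaneously identifies every other coefficient $s_i^\pm(q_\pm)$, including orbifold-extended monomials in which $q_{-,\iota(i^{ex})}$ may appear nontrivially via (\ref{eq:MCOrbCoeffGenral}). This global consistency is precisely where the wall-crossing hypothesis $p_2, \dots, p_k \in W$ is used, and it constitutes the combinatorial core of the argument.
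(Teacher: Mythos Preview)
Your Case~I matches the paper exactly. Your Case~II ``reduces to Case~I via the flag change on the $\X_-$ side'' is morally right but imprecise: the reparametrization (\ref{eq:XYFlagChange})--(\ref{eq:MirrorcurveFlagChange}) is stated for flags \emph{within the same fan}, and $\sigma_+ \notin \Sigma_-$, so the putative coefficients $s_i^{\sigma_+,-}(q_-)$ are only Laurent monomials and the formula needs justification. The paper bypasses this by a direct linear-algebra computation: it uses the relation $\sum_i m_i D_i = 0$ coming from the short exact sequence (\ref{eq:StackyFanSq}) to express $D_4$ in the basis $\{D_1, D_5, \dots, D_R\}$ (eq.~(\ref{eq:DchangeCase2})), then compares the expansions of $p_1^+$ in $\enef_{\sigma_+}$ and in $\enef_{\sigma_-}$ via (\ref{eq:POnePlus}). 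This yields $s_i^-(q_-) = s_i^+(q_+)\,q_{+,1}^{-s_{14}^+ m_i/m_4}$ directly, from which the $x,y,f$ relations (\ref{eq:XYFchange}) drop out.

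Your Case~III contains a factual error and a genuine gap. First, the bases \emph{fully} coincide: one checks from the definitions in Section~\ref{sect:Orbits} that $v_j(\tau_+^1,\sigma_+^1) = v_j(\tau_-,\sigma_-)$ for all $j=1,2,3$ (both flags have $i_1 = 1$, $i_3 = 3$, and $v_2$ agrees because $b_2$ lies on the segment $\overline{b_3 b_4}$), so $m_i^+ = m_i^-$ and $n_i^+ = n_i^-$ and there is no flag change at all. Second, your proposed route through Lemma~\ref{lem:MCOrbCoeff} and (\ref{eq:MCOrbCoeffGenral}) only controls the coefficients for $i \in I_\orb$; it says nothing about $s_i^\pm(q_\pm)$ for $i \in I_K$, which in general are nontrivial monomials in all the $q_a$'s. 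Your ``anticipated obstacle'' is thus not merely delicate but unresolved by the tools you cite. The paper's method is again a direct computation: it uses \emph{two} relations $\sum_i m_i D_i = 0$ and $\sum_i n_i D_i = 0$ from (\ref{eq:StackyFanSq}) to write $D_4$ in terms of $\{D_2, D_5, \dots, D_R\}$ (eq.~(\ref{eq:DchangeCase3})), compares expansions via (\ref{eq:POnePlus}), and obtains $s_i^-(q_-) = s_i^+(q_+)\,q_{+,1}^{s_{14}^+ n_1 m_i/(m_1\ell) - s_{14}^+ n_i/\ell}$ uniformly in $i$. The key device you are missing in both Cases~II and~III is this use of the linear relations among the $D_i$ to pass between the cones $\enef_{\sigma_\pm}$.
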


\subsubsection{Proof of Proposition \ref{prop:MirrorcurveWallCross}, Case I}
In this case, we have $m_i^\pm = m_i(\tau, \sigma), n_i^\pm = n_i(\tau, \sigma)$ for each $i = 1, \dots, R$. Since both $\enef(\X_\pm)$ are contained in $\enef_{\sigma} = \sum_{i = 4}^R \R_{\ge 0} D_i$, as in $(\ref{eq:PaLinearExpansion})$, we write
\[  p_1^+ = \sum_{i = 4}^R s_{1i}^+ D_i, \quad p_1^- = \sum_{i = 4}^R s_{1i}^- D_i, \quad p_a = \sum_{i = 4}^R s_{ai}D_i \quad \mbox{for $a \ge 2$},  \]
and take $s_{1i}^\pm = s_{ai} = 0$ for all $i = 1,2,3, a \ge 2$. From this we have
\[  -c_1p_1^- + c_2p_2 + \cdots + c_kp_k = \sum_{i=4}^R \left(-c_1 s_{1i}^- + \sum_{a = 2}^k c_a s_{ai} \right) D_i. \]
Then (\ref{eq:POnePlus}) implies the relation
\[  \begin{bmatrix}
s_{14}^- & s_{24} & \cdots & s_{k4}\\
s_{15}^- & s_{25} & \cdots & s_{k5}\\
\vdots & \vdots & \ddots & \vdots\\
s_{1R}^- & s_{2R} & \cdots & s_{kR}
\end{bmatrix} \begin{bmatrix}
-c_1 \\ c_2 \\ \vdots \\ c_k
\end{bmatrix} = \begin{bmatrix}
  s_{14}^+ \\ s_{15}^+ \\ \vdots \\ s_{1R}^+
\end{bmatrix}.  \]
Adding in the relation (\ref{eq:Qchange}), we see that for each $i = 1, \dots, R$,
\[  s_i^-(q_-) = q_{-,1}^{s_{1i}^-}\prod_{a = 2}^k q_{-,a}^{s_{ai}} = q_{+,1}^{-c_1s_{i1}^- + \sum_{a = 2}^k c_as_{ai}} \prod_{a=2}^k q_{+,a}^{s_{ai}} = q_{+,1}^{s_{1i}^+} \prod_{a=2}^k q_{+,a}^{s_{ai}} = s_i^+(q_+). \]
The desired identification (\ref{eq:MirrorcurveWallCross}) thus follows from the relations
\begin{equation}\label{eq:XYFchangeCase1}
  x_- = x_+, \qquad y_- = y_+, \qquad f_- = f_+.
\end{equation}

\subsubsection{Proof of Proposition \ref{prop:MirrorcurveWallCross}, Case II}
In this case, the change of basis matrix from $\{v_1(\tau, \sigma_-)$, $v_2(\tau, \sigma_-), v_3(\tau, \sigma_-)\}$ to $\{v_1(\tau, \sigma_+), v_2(\tau, \sigma_+), v_3(\tau, \sigma_+)\}$ has form
\[  \begin{bmatrix}
      1 & 0 & 0\\
      b & 1 & 0\\
      0 & 0 & 1
    \end{bmatrix}
\]
for some $b \in \Z$. For $i = 1, \dots, R$, we have
\begin{equation}\label{eq:MNchangeCase2}
m_i^+ = m_i(\tau, \sigma_+) = m_i(\tau, \sigma_-) = m_i^-, \qquad n_i^+ = n_i(\tau, \sigma_+) = bm_i(\tau, \sigma_-) + n_i(\tau, \sigma_-) = bm_i^- + n_i^-.
\end{equation}
Set $m_i = m_i^\pm$. The short exact sequence (\ref{eq:StackyFanSq}) implies
\[  m_1D_1 + \sum_{i=4}^R m_iD_i = 0,  \]
that is,
\begin{equation}\label{eq:DchangeCase2}
  D_4 = - \frac{m_1}{m_4} D_1 - \sum_{i = 5}^R \frac{m_i}{m_4}D_i.
\end{equation}
Since $\enef(\X_+) \subseteq \enef_{\sigma_+} = \sum_{i = 4}^R \R_{\ge 0} D_i$ and $\enef(\X_-) \subseteq \enef_{\sigma_-} = \R_{\ge 0}D_1 + \sum_{i = 5}^R \R_{\ge 0} D_i$, we write
\[  p_1^+ = \sum_{i = 4}^R s_{1i}^+ D_i, \quad p_1^- = s_{11}^- + \sum_{i = 5}^R s_{1i}^- D_i, \quad p_a = s_{a1}D_1 + \sum_{i = 5}^R s_{ai}D_i \quad \mbox{for $a \ge 2$},  \]
and take $s_{11}^+ = s_{12}^\pm = s_{13}^\pm = s_{14}^- = 0$, $s_{ai} = 0$ for all $i = 2,3,4, a \ge 2$. From this and (\ref{eq:DchangeCase2}) we have
\[  p_1^+ = -\frac{s_{14}^+m_1}{m_4} D_1 + \sum_{i = 5}^R \left(s_{1i}^+ -\frac{s_{14}^+m_i}{m_4}\right)D_i, \]
\[  -c_1p_1^- + c_2p_2 + \cdots + c_kp_k = \left(-c_1 s_{11}^- + \sum_{a = 2}^k c_a s_{a1} \right) D_1 + \sum_{i=5}^R \left(-c_1 s_{1i}^- + \sum_{a = 2}^k c_a s_{ai} \right) D_i. \]
Then (\ref{eq:POnePlus}) implies the relation
\[  \begin{bmatrix}
s_{11}^- & s_{21} & \cdots & s_{k1}\\
s_{15}^- & s_{25} & \cdots & s_{k5}\\
\vdots & \vdots & \ddots & \vdots\\
s_{1R}^- & s_{2R} & \cdots & s_{kR}
\end{bmatrix} \begin{bmatrix}
-c_1 \\ c_2 \\ \vdots \\ c_k
\end{bmatrix} = \begin{bmatrix}
  0 \\ s_{15}^+ \\ \vdots \\ s_{1R}^+
\end{bmatrix} - \frac{s_{14}^+}{m_4}\begin{bmatrix}
  m_1 \\ m_5 \\ \vdots \\ m_R
\end{bmatrix}.  \]
Adding in the relation (\ref{eq:Qchange}), we see that for each $i = 1, \dots, R$,
\[  s_i^-(q_-) = q_{-,1}^{s_{1i}^-}\prod_{a = 2}^k q_{-,a}^{s_{ai}} = q_{+,1}^{-c_1s_{1i}^- + \sum_{a = 2}^k c_as_{ai}} \prod_{a=2}^k q_{+,a}^{s_{ai}} = q_{+,1}^{s_{1i}^+ - \frac{s_{14}^+m_i}{m_4}} \prod_{a=2}^k q_{+,a}^{s_{ai}} = s_i^+(q_+)q_{+,1}^{- \frac{s_{14}^+m_i}{m_4}}. \]
This together with (\ref{eq:MNchangeCase2}) implies
\[  \begin{aligned}
  s_i^-(q_-)x_-^{m_i}y_-^{-m_if_- + n_i^-} &= s_i^+(q_+)q_{+,1}^{- \frac{s_{14}^+m_i}{m_4}} x_-^{m_i}y_-^{-m_if_- - bm_i + n_i^+}\\
  & = s_i^+(q_+) \left(x_-q_{+,1}^{-\frac{s_{14}^+}{m_4}}\right)^{m_i}y_-^{-m_i(f_-+b)+n_i^+}.
\end{aligned} \]
The desired identification (\ref{eq:MirrorcurveWallCross}) thus follows from the relations
\begin{equation}\label{eq:XYFchangeCase2}
  x_- = x_+q_{+,1}^{\frac{s_{14}^+}{m_4}}, \qquad y_- = y_+, \qquad f_- = f_+-b.
\end{equation}

\begin{example} \rm{
As an example in Case IIa, consider the pair $\X_\pm$ that are related by a flop given in Figure \ref{fig:WallCrossCaseII} and the framed Aganagic-Vafa branes $(\L_\pm, f_\pm)$. The coordinates of $b_1, \dots, b_4$ with respect to the flags $(\tau, \sigma_+)$ and $(\tau, \sigma_-)$ are
\[  \begin{bmatrix}
      1 & 0 & 0 & 1\\
      0 & 1 & 0 & -1\\
      1 & 1 & 1 & 1
    \end{bmatrix}, \qquad
    \begin{bmatrix}
          1 & 0 & 0 & 1\\
          1 & 1 & 0 & 0\\
          1 & 1 & 1 & 1
        \end{bmatrix}
\]
respectively, and the change of basis matrix from $(\tau, \sigma_-)$ to $(\tau, \sigma_+)$ is
\[  \begin{bmatrix}
      1 & 0 & 0\\
      -1 & 1 & 0\\
      0 & 0 & 1
    \end{bmatrix}.  \]
The secondary fan of $\X_\pm$ is 1-dimensional, with $\enef(\X_+)$ spanned by $D_2 = D_4 = 1$ and $\enef(\X_-) = D_1 = D_3 = -1$. Take $p_1^\pm = \pm 1$. Then the mirror curve equations of $\X_\pm$ are
\[  H_+(q_+, x_+, y_+) = x_+y_+^{-f_+} + y_+ + 1 + q_{+,1}x_+y_+^{-f_+-1}, \]
\[ H_-(q_-, x_-, y_-) = q_{-,1}x_-y_-^{-f_-+1} + y_- + 1 + x_-y_-^{-f_-}, \]
which can be identified under the relations
\[  q_{-,1} = q_{+,1}^{-1}, \quad x_- = x_+q_{+,1}, \quad y_- = y_+, \quad f_- = f_++1.\]
For an example in Case IIb, see \cite{Fang19} for the identification of mirror curve equations of $K_{\P^2}$ and $[\C^3/\Z_3]$.
}\end{example}

\subsubsection{Proof of Proposition \ref{prop:MirrorcurveWallCross}, Case III}
In this case, we can identify basis vectors $v_j(\tau_+^1, \sigma_+^1) = v_j(\tau_-, \sigma_-)$ for $j = 1,2,3$, and thus coordinates
\[  m_i^+ = m_i(\tau_+^1, \sigma_+^1) = m_i(\tau_-, \sigma_-)= m_i^-, \qquad n_i^+=n_i(\tau_+^1, \sigma_+^1) = n_i(\tau_-, \sigma_-)= n_i^-  \]
for $i = 1,\dots, R$. Set $m_i = m_i^\pm, n_i = n_i^\pm$. Then $\ell = n_4, \ell_1 = n_2, \ell_2 = n_4 - n_2$. The short exact sequence (\ref{eq:StackyFanSq}) implies
\[  m_1D_1 + \sum_{i=5}^Rm_iD_i = 0, \qquad n_1D_1 + \ell_1D_2 + \ell D_4 + \sum_{i=5}^R n_iD_i = 0, \]
which implies
\begin{equation}\label{eq:DchangeCase3}
  D_4 = - \frac{\ell_1}{\ell} D_2 + \sum_{i = 5}^R \left(\frac{n_1m_i}{m_1 \ell} - \frac{n_i}{\ell} \right)D_i.
\end{equation}
Since $\enef(\X_+) \subseteq \enef_{\sigma_+^1} = \sum_{i = 4}^R \R_{\ge 0} D_i$ and $\enef(\X_-) \subseteq \enef_{\sigma_-} = \R_{\ge 0}D_2 + \sum_{i = 5}^R \R_{\ge 0} D_i$, we write
\[  p_1^+ = \sum_{i = 4}^R s_{1i}^+ D_i, \quad p_1^- = s_{12}^- + \sum_{i = 5}^R s_{1i}^- D_i, \quad p_a = s_{a2}D_2 + \sum_{i = 5}^R s_{ai}D_i \quad \mbox{for $a \ge 2$},  \]
and take $s_{11}^\pm = s_{12}^+ = s_{13}^\pm = s_{14}^- = 0$, $s_{ai} = 0$ for all $i = 1,3,4, a \ge 2$. From this and (\ref{eq:DchangeCase3}) we have
\[  p_1^+ = -\frac{s_{14}^+\ell_1}{\ell} D_2 + \sum_{i = 5}^R \left(s_{1i}^+ + \frac{s_{14}^+n_1m_i}{m_1\ell} - \frac{s_{14}^+n_i}{\ell}\right)D_i, \]
\[  -c_1p_1^- + c_2p_2 + \cdots + c_kp_k = \left(-c_1 s_{12}^- + \sum_{a = 2}^k c_a s_{a2} \right) D_2 + \sum_{i=5}^R \left(-c_1 s_{1i}^- + \sum_{a = 2}^k c_a s_{ai} \right) D_i. \]
Then (\ref{eq:POnePlus}) implies the relation
\[  \begin{bmatrix}
s_{12}^- & s_{22} & \cdots & s_{k2}\\
s_{15}^- & s_{25} & \cdots & s_{k5}\\
\vdots & \vdots & \ddots & \vdots\\
s_{1R}^- & s_{2R} & \cdots & s_{kR}
\end{bmatrix} \begin{bmatrix}
-c_1 \\ c_2 \\ \vdots \\ c_k
\end{bmatrix} = \begin{bmatrix}
  0 \\ s_{15}^+ \\ \vdots \\ s_{1R}^+
\end{bmatrix} + \frac{s_{14}^+n_1}{m_1\ell}\begin{bmatrix}
  0 \\ m_5 \\ \vdots \\ m_R
\end{bmatrix} - \frac{s_{14}^+}{\ell}\begin{bmatrix}
  \ell_1 \\ n_5 \\ \vdots \\ n_R
\end{bmatrix}.  \]
Adding in the relation (\ref{eq:Qchange}), we see that for each $i = 1, \dots, R$,
\[  \begin{aligned}
  s_i^-(q_-) = q_{-,1}^{s_{1i}^-}\prod_{a = 2}^k q_{-,a}^{s_{ai}} &= q_{+,1}^{-c_1s_{1i}^- + \sum_{a = 2}^k c_as_{ai}} \prod_{a=2}^k q_{+,a}^{s_{ai}} \\
  &= q_{+,1}^{s_{1i}^+ + \frac{s_{14}^+n_1m_i}{m_1\ell} - \frac{s_{14}^+n_i}{\ell}} \prod_{a=2}^k q_{+,a}^{s_{ai}} = s_i^+(q_+)q_{+,1}^{\frac{s_{14}^+n_1m_i}{m_1\ell} - \frac{s_{14}^+n_i}{\ell}}.
\end{aligned} \]
This implies
\[  \begin{aligned}
s_i^-(q_-)x_-^{m_i}y_-^{-m_if_- + n_i} &= s_i^+(q_+)q_{+,1}^{\frac{s_{14}^+n_1m_i}{m_1\ell} - \frac{s_{14}^+n_i}{\ell}}x_-^{m_i}y_-^{-m_if_- + n_i}\\
  &= s_i^+(q_+)\left(x_-q_{+,1}^{\frac{s_{14}^+n_1}{m_1\ell}- \frac{f_-s_{14}^+}{\ell}}\right)^{m_i}
  \left(y_- q_{+,1}^{-\frac{s_{14}^+}{\ell}}\right)^{-m_if_-+n_i}.
\end{aligned} \]
The desired identification (\ref{eq:MirrorcurveWallCross}) thus follows from the relations
\begin{equation}\label{eq:XYFchangeCase3}
  x_- = x_+q_{+,1}^{-\frac{s_{14}^+n_1}{m_1\ell}+ \frac{f_+s_{14}^+}{\ell}}, \qquad y_- = y_+ q_{+,1}^{\frac{s_{14}^+}{\ell}}, \qquad f_- = f_+.
\end{equation}

This completes the proof of Proposition \ref{prop:MirrorcurveWallCross}. Let us summarize the required relations (\ref{eq:XYFchangeCase1}, \ref{eq:XYFchangeCase2}, \ref{eq:XYFchangeCase3}) between parameters $(x_\pm, y_\pm, f_\pm)$ here:
\begin{equation}\label{eq:XYFchange}
  x_- = \begin{cases}
    x_+ & \mbox{ in Case I},\\
    x_+q_{+,1}^{\frac{s_{14}^+}{m_4}} & \mbox{ in Case II},\\
    x_+q_{+,1}^{-\frac{s_{14}^+n_1}{m_1\ell}+ \frac{f_+s_{14}^+}{\ell}} & \mbox{ in Case III},
  \end{cases} \qquad
\begin{matrix}
  y_-= \begin{cases}
    y_+ & \mbox{ in Cases I or II},\\
    y_+ q_{+,1}^{\frac{s_{14}^+}{\ell}} & \mbox{ in Case III},
  \end{cases} \vspace{1em}\\
  f_- = \begin{cases}
    f_+ & \mbox{ in Cases I or III},\\
    f_+-b & \mbox{ in Case II}.
  \end{cases}
\end{matrix}
\end{equation}

In Case III, we may further identify $H_+(q_+,x_+,y_+)$ with the mirror curve equation \\$H_{+,2}(q_+, x_{+,2}, y_{+,2})$ of $\X_+$ defined with respect to the flag $(\tau_+^2, \sigma_+^2)$ and framed Aganagic-Vafa brane $(\L_+^2, f_{+,2})$, as follows:

\begin{lemma}\label{lem:MirrorcurveIdCase3Plus}
There is a term-by-term identification
\begin{equation}\label{eq:MirrorcurveIdCase3Plus}
H_+(q_+, x_+, y_+) = y_+^{\ell_1}H_{+,2}(q_+, x_{+,2}, y_{+,2})
\end{equation}
under relations (\ref{eq:XYFchangeCase3Plus}) between parameters $(x_+, y_+)$ and $(x_{+,2}, y_{+,2})$ and framings $f_+$ and $f_{+,2}$.
\end{lemma}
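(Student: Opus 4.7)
The plan is to derive the identification by combining the flag-change formulas (\ref{eq:XYFlagChange}) and (\ref{eq:MirrorcurveFlagChange}) with the two framing reparametrizations (\ref{eq:MirrorcurveXYReparam}). Both $H_+$ and $H_{+,2}$ arise from the same unframed mirror curve equation of $\X_+$ (with respect to the chosen basis $p^+$ of $\Lat^\vee$), but written using different flags and framings, so the lemma should be a direct application of the general flag-change machinery from Section \ref{sect:MirrorCurve} to the pair of flags $(\tau_+^1, \sigma_+^1)$ and $(\tau_+^2, \sigma_+^2)$, which share no maximal cone but share the new ray $\rho_{i^{ex}}$.

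First I would compute the change of basis between the two flag bases. Since $v_3(\tau_+^1, \sigma_+^1) = b_3$ while $v_3(\tau_+^2, \sigma_+^2) = b_{i^{ex}} = b_2$, and the collinearity of $b_3, b_2, b_4$ on the shared edge with lattice-distance ratios $\ell_1 : \ell_2$ forces $\ell b_2 = \ell_2 b_3 + \ell_1 b_4$, we obtain $v_2(\tau_+^1, \sigma_+^1) = v_2(\tau_+^2, \sigma_+^2) = (b_4 - b_3)/\ell$. Expressing $b_3$ in the $(\tau_+^2, \sigma_+^2)$-basis then gives $b_3 = -\ell_1 v_2(\tau_+^2, \sigma_+^2) + v_3(\tau_+^2, \sigma_+^2)$, i.e., $m_3(\tau_+^2, \sigma_+^2) = 0$ and $n_3(\tau_+^2, \sigma_+^2) = -\ell_1$.

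Plugging these into (\ref{eq:MirrorcurveFlagChange}) with $i_3(\tau_+^1, \sigma_+^1) = 3$ and noting that $s_2^{\sigma_+^2}(q_+) = 1$ (since $2 \in I_{\sigma_+^2}'$), I obtain $H(q_+, \tilde{x}_2, \tilde{y}_2) = s_3^{\sigma_+^2}(q_+) \tilde{y}_2^{-\ell_1} H(q_+, \tilde{x}_1, \tilde{y}_1)$. Meanwhile the $y$-part of (\ref{eq:XYFlagChange}) (with the entries $c = 0$, $d = 1$ coming from $v_2(\tau_+^1, \sigma_+^1) = v_2(\tau_+^2, \sigma_+^2)$) reads $\tilde{y}_1 = \tilde{y}_2 \, s_3^{\sigma_+^2}(q_+)^{-1/\ell_1}$, so $s_3^{\sigma_+^2}(q_+)^{-1}\tilde{y}_2^{\ell_1} = \tilde{y}_1^{\ell_1}$ and the identification simplifies to $H(q_+, \tilde{x}_1, \tilde{y}_1) = \tilde{y}_1^{\ell_1} H(q_+, \tilde{x}_2, \tilde{y}_2)$. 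Substituting the framing reparametrizations $\tilde{y}_j = y_{+,j}$ and $\tilde{x}_j = x_{+,j} y_{+,j}^{-f_{+,j}}$ (with $(y_{+,1}, x_{+,1}, f_{+,1}) = (y_+, x_+, f_+)$) yields the desired $H_+ = y_+^{\ell_1} H_{+,2}$, together with the $y$-relation $y_{+,2} = y_+ \, s_3^{\sigma_+^2}(q_+)^{1/\ell_1}$. The remaining relations for $x_{+,2}$ and $f_{+,2}$ in (\ref{eq:XYFchangeCase3Plus}) are then read off from the $\tilde{x}$-part of (\ref{eq:XYFlagChange}) by equating $x_+ y_+^{-f_+} = \tilde{x}_2^a \tilde{y}_2^b \, s_3^{\sigma_+^2}(q_+)^{w_3(\tau_+^1, \sigma_+^1)}$ and matching powers of $y_+$.

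The main technical point I expect will be ensuring that $f_{+,2}$ comes out as an integer, since framings lie in $\Z$. This reduces to tracking integrality of the entries $a, b$ in the $\rm{SL}(3;\Z)$ change-of-basis matrix and of the ratio $r(\tau_+^2, \sigma_+^2)/r(\tau_+^1, \sigma_+^1)$; the former is automatic from the fact that both flag bases are $\Z$-bases of $N$ with $b_{i^{ex}}$ primitive, though the divisibility constraints relating $f_+$ and $f_{+,2}$ still require careful bookkeeping of the star-subdivision combinatorics at $b_{i^{ex}}$.
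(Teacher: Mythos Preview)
Your approach is essentially the same as the paper's: both proofs apply the flag-change identity (\ref{eq:MirrorcurveFlagChange}) together with (\ref{eq:XYFlagChange}) to the pair $(\tau_+^1,\sigma_+^1)$, $(\tau_+^2,\sigma_+^2)$, and then substitute the framing reparametrizations (\ref{eq:MirrorcurveXYReparam}). The only substantive difference is that the paper runs the flag change in the direction $(\tau_+^2,\sigma_+^2)\to(\tau_+^1,\sigma_+^1)$, so that the quantities appearing in (\ref{eq:XYFlagChange}) are $s_4^{\sigma_+^1}(q_+)$ and $s_2^{\sigma_+^1}(q_+)=1$ rather than your $s_3^{\sigma_+^2}(q_+)$; the paper then uses Proposition \ref{prop:MirrorcurveWallCross} (specifically that $s_4^-(q_-)=1$) to evaluate $s_4^{\sigma_+^1}(q_+)=q_{+,1}^{s_{14}^+}$, which is what produces the explicit powers of $q_{+,1}$ in (\ref{eq:XYFchangeCase3Plus}). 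In your direction you would still need this input (or the equivalent computation of $s_3^{\sigma_+^2}(q_+)=q_{+,1}^{s_{14}^+\ell_1/\ell_2}$) to match the stated relations, so you should make that step explicit.

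Your integrality worry is unfounded: since $i_1(\tau_+^1,\sigma_+^1)=i_1(\tau_+^2,\sigma_+^2)=1$ one has $r(\tau_+^2,\sigma_+^2)=m_1=r(\tau_+^1,\sigma_+^1)$, the change-of-basis matrix has $a=1$, and the paper simply reads off $f_{+,2}=f_+-b'$ with $b'\in\Z$ coming from the $\mathrm{SL}(3;\Z)$ matrix.
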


\begin{proof}
On $\X_+$, we will change flags from $(\tau_+^2, \sigma_+^2)$ to $(\tau_+^1, \sigma_+^1)$. First observe that $s_4^-(q_-) = 1$. Thus the identification (\ref{eq:MirrorcurveWallCross}) and relations (\ref{eq:XYFchangeCase3}) imply that $s_4^+(q_+) = q_{+,1}^{s_{14}^+}$. The change of basis matrix from $\{v_1(\tau_+^2, \sigma_+^2)$, $v_2(\tau_+^2, \sigma_+^2), v_3(\tau_+^2, \sigma_+^2)\}$ to $\{v_1(\tau_+^1, \sigma_+^1), v_2(\tau_+^1, \sigma_+^1)$, $v_3(\tau_+^1, \sigma_+^1)\}$ has form
\[  \begin{bmatrix}
      1 & 0 & 0\\
      b' & 1 & \ell_1\\
      0 & 0 & 1
    \end{bmatrix}
\]
for some $b' \in \Z$. In particular, $m_1(\tau_+^2,\sigma_+^2) = m_1$ and $n_1(\tau_+^2, \sigma_+^2) = n_1 - bm_1 - \ell_1$. The desired identification (\ref{eq:MirrorcurveIdCase3Plus}) will follow from (\ref{eq:MirrorcurveFlagChange}) if we use the following reparametrizations (see (\ref{eq:XYFlagChange}, \ref{eq:MirrorcurveXYReparam})):
\[  x_{+,2}y_{+,2}^{-f_{+,2}} = x_+y_+^{-f_++b'}q_{+,1}^{\frac{s_{14}^+(bm_1-n_1+\ell_1)}{m_1\ell_2}}, \qquad y_{+,2} = y_+q_{+,1}^{\frac{s_{14}^+}{\ell_2}}, \]
which is equivalent to
\begin{equation}\label{eq:XYFchangeCase3Plus}
x_{+,2} = x_+q_{+,1}^{\frac{s_{14}^+(\ell_1 - n_1)}{m_1\ell_2} + \frac{s_{14}^+f_+}{\ell_2}}, \qquad y_{+,2} = y_+ q_{+,1}^{\frac{s_{14}^+}{\ell_2}}, \qquad f_{+,2} = f_+ - b'. \qedhere
\end{equation}
\end{proof}

\begin{example} \label{ex:A1MCId} \rm{
Consider the crepant resolution $\X_+ = K_{\P^1} \oplus \O_{\P^1} \to  \A_1 = \X_-$ (see Examples \ref{ex:A1SecondaryFan} and \ref{ex:A1MC}). Take a framed Aganagic-Vafa brane $(\L_-, f_-)$ in $\A_1$ that intersects the cone spanned by $b_3$ and $b_4$, and let $(\L_+^1, f_+), (\L_+^2, f_{+,2})$ be frame Aganagic-Vafa branes in the preimage. Take $p_1^+ = D_3 = D_4 = 1$ and $p_1^- = D_2 = -2$. Then the mirror curve equations
\[  H_+(q_+, x_+, y_+) = x_+y_+^{-f_+} + y_+ + 1 + q_{+,1}y_+^2,  \]
\[  H_{+,2}(q_+, x_{+,2}, y_{+,2}) = x_{+,2}y_{+,2}^{-f_{+,2}} + 1 + q_{+,1}y_{+,2}^{-1} + y_{+,2}, \]
\[  H_-(q_-, x_-, y_-) = x_-y_-^{-f_-} + q_{-,1}y_- + 1 + y_-^2 \]
satisfy the identification $H_- = H_+ = y_+H_{+,2}$ under the relations
\[  q_{-,1} = q_{+,1}^{-\frac{1}{2}}, \quad x_-q_{+,1}^{-f_+} = x_+ = x_{+,2}q_{+,1}^{-f_+-1}, \quad y_-q_{+,1}^{-\frac{1}{2}} = y_+ = y_{+,2}q_{+,1}^{-1}, \quad f_- = f_+ = f_{+,2}-1.  \]
}\end{example}

\begin{remark}\label{rem:} \rm{
An alternative interpretation of the framing relations (\ref{eq:XYFchange}, \ref{eq:XYFchangeCase3Plus}) is that, these are the relations required to preserve the 1-dimensional framing torus $T_f'$ defined at the end of Section \ref{sect:AVBranes} under the crepant transformations. In other words, our toric crepant transformations are all $T_f'$-equivariant.
}
\end{remark}

\section{Global Mirror Curve and Analytic Continuation}\label{sect:GlobalMirrorCurve}
In this section, we prove the Open Crepant Transformation Conjecture through focusing on a pair of toric Calabi-Yau 3-orbifolds $\X_+$ and $\X_-$ which differ by a single wall-crossing in the secondary fan (Theorem \ref{thm:OCTC}). The mirror theorem of Fang-Liu-Tseng \cite{FLT19} (Theorem \ref{thm:MirrorThmDisk}) relates disk invariants of $\X_\pm$ to local solutions to their mirror curve equations in the large radius limit. Our main observation is that using the identification in Proposition \ref{prop:MirrorcurveWallCross}, the mirror curves of $\X_\pm$ fit into a global mirror curve equation over an open B-model moduli space. Our desired identification of disk invariants can then be achieved via analytic continuation on the local solutions to the global mirror curve equation. We use the notations introduced in Sections \ref{sect:CrepantDisk} and \ref{sect:MirrorcurveWallCross}.

\subsection{Open B-model moduli space}\label{sect:OpenModuli}
We start with the construction of the open B-model moduli space. We treat the two cases where the Aganagic-Vafa branes are outer (resp. inner) separately.

\subsubsection{Construction in the outer case}
Let $\M_\pm = \Spec \C[q_{\pm,1}, \dots, q_{\pm,k}] \cong \C^k$. We glue the two charts together using the relation (\ref{eq:Qchange}) on the common open subset $\{q_{\pm,1} \neq 0\}$ to obtain a $k$-dimensional space $\M$. Through Proposition \ref{prop:MirrorcurveWallCross}, the mirror curve $C_{q_\pm}^\pm$ of $\X_\pm$, defined over the chart $\M_\pm$, fit into a global family of affine curves over $\M$.

\begin{remark} \rm{
There is a map from $\M_\pm$ to the affine chart corresponding to $\X_\pm$ in the secodary variety given by the lattice map $\Z^k \to \Lat^\vee$ that sends the first basis vector to $p_{\pm, 1}$ and the $a$-th basis vector ($a \ge 2$) to $p_a$. Under this map, we obtain a compactification of the family $C_{q_{\pm}}^\pm$ as the pullback of a divisor in a flat family of toric surfaces over the corresponding chart of the secondary variety. See \cite[Section 4.3.1]{FLT19} and the references therein.
}\end{remark}

\begin{example} \label{ex:A1GlobalMC} \rm{
For the crepant resolution $K_{\P^1} \oplus \O_{\P^1} \to  \A_1$ (see Examples \ref{ex:A1SecondaryFan}, \ref{ex:A1MC}, and \ref{ex:A1MCId}), $\M = \P(1,2)$ is the secondary variety. The compactified global mirror curve over $\M$ is illustrated in Figure \ref{fig:A1GlobalMC}.
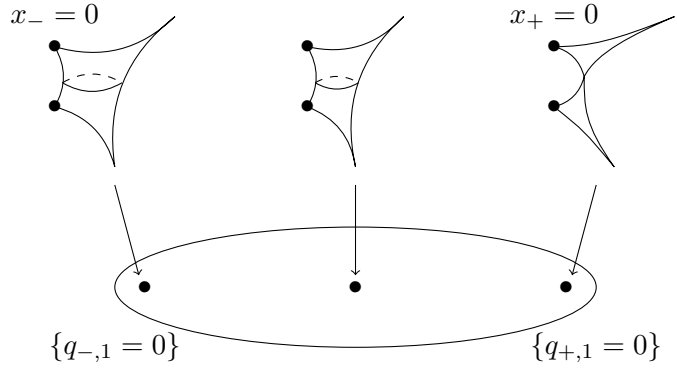
\begin{figure}[htb]
  \[
      \begin{tikzpicture}[scale=.8]
        \draw (0,0) ellipse (4 and 1);
        \node at (-3.5, 0) {$\bullet$};
        \node at (-4, -1) {$\{q_{-,1} =0\}$};
        \node at (3.5, 0) {$\bullet$};
        \node at (4,-1) {$\{q_{+,1} = 0\}$};
        \node at (0,0) {$\bullet$};

        \draw (-5,4) to[bend left] (-5,3) to[bend left] (-4, 2) to[bend left] (-3, 4.5) to[bend left] (-5,4);
        \node at (-5, 4) {$\bullet$};
        \node at (-5, 3) {$\bullet$};
        \node at (-5, 4.5) {$x_- = 0$};
        \draw[->] (-4, 1.7) -- (-3.6,0.2);
        \draw (-4.87, 3.4) to[bend right] (-3.88, 3.4);
        \draw[dashed] (-4.87, 3.4) to[bend left] (-3.88, 3.4);

        \draw (-0.8,4) to[bend left] (-0.8,3) to[bend left] (0, 2) to[bend left] (0.8, 4.5) to[bend left] (-0.8,4);
        \node at (-0.8, 4) {$\bullet$};
        \node at (-0.8, 3) {$\bullet$};
        \draw[->] (0, 1.7) -- (0,0.2);
        \draw (-0.67, 3.4) to[bend right] (0.05, 3.4);
        \draw[dashed] (-0.67, 3.4) to[bend left] (0.05, 3.4);

        \coordinate (1) at (3.8,3.5);
        \draw (1) to[bend right] (3.3, 4) .. controls (3.9, 4) and (4.3, 4.1) .. (5.3,4.5) .. controls (4.6, 4.2) and (4, 3.9) ..  (1) to[bend left] (3.3,3) .. controls (3.8, 2.6) .. (4.3, 2) .. controls (3.8, 2.7) and (3.8, 3).. (1);
        \node at (3.3, 4) {$\bullet$};
        \node at (3.3, 3) {$\bullet$};
        \node at (3.3, 4.5) {$x_+ = 0$};
        \draw[->] (4, 1.7) -- (3.6,0.2);
      \end{tikzpicture}
  \]
  \caption{The compactified global mirror curve of $K_{\P^1} \oplus \O_{\P^1} \to \A_1$. All curves in the family have genus $0$. The curve over the \emph{orbifold point} $\{q_{-,1} = 0\}$ or a generic point in $\M$ is smooth, while the curve over the \emph{large radius limit point} $\{q_{+,1}=0\}$ is nodal and has two components. On each curve, there are two points with $x=0$, which corresponds to the open large radius limits to be introduced below.}
  \label{fig:A1GlobalMC}
\end{figure}
The global mirror curve for the crepant resolution $K_{\P^2} \to [\C^3/\Z_3]$ is studied in \cite{Fang19}.
}\end{example}

We add an extra dimension to $\M$ to incorporate the open moduli parameters and the framing dependence. Let $\tilde{\M}_\pm = \Spec \C[q_{\pm,1}, \dots, q_{\pm, k}, x_\pm] \cong \C^{k+1}$. The \emph{open B-model moduli space} $\tilde{\M}$ is formed by gluing the two charts $\tilde{\M}_\pm$ together using the relations (\ref{eq:Qchange}, \ref{eq:XYFchange}) on the common open subset $\{q_{\pm,1} \neq 0\}$. Note that $\M = \{x_\pm=0\} \subset \tilde{\M}$. We call the point $P_\pm \in \tilde{\M}_\pm$ where $(q_\pm, x_\pm) = 0$ the \emph{large radius limit (LRL)} point of $\X_\pm$, which represents a large complex structure/orbifold/open mixed-type limit. This is consistent with the limit $q_\pm \to 0, x_\pm \to 0$ we took in Section \ref{sect:MirrorThmDisk}.

As in (\ref{eq:MirrorcurveIncludeX}), we can view $H_\pm(q_\pm, x_\pm, y_\pm)=0$ as a family of equations in the indeterminate $y_\pm$ parametrized by $(q_\pm, x_\pm)$. Since the Aganagic-Vafa branes are outer, as we observed in Section \ref{sect:MirrorcurveOpen}, each coefficient $\tilde{s}_i^\pm(q_\pm, x_\pm)$ in $H_\pm$ (see (\ref{eq:CoeffWallCrossForm})) is a monomial in $(q_\pm, x_\pm)$. Thus this family is defined over all of $\tilde{\M}_\pm$. Proposition \ref{prop:MirrorcurveWallCross} then implies that the two families fit into a global family of equations over $\tilde{\M}$.

\subsubsection{Construction in the inner case}
Here the construction of the open B-model moduli space $\tilde{\M}$ is similar to the outer case, except that we would still like to obtain a global family of mirror curve equations defined over all of $\tilde{\M}$. Let
\[  q_\pm^\alpha = \begin{cases}
      s_4^\pm(q_\pm)^{-\frac{1}{m_4}} & \mbox{ in Case I },\\
      s_5^\pm(q_\pm)^{-\frac{1}{m_5}} & \mbox{ in Cases II or III},
    \end{cases} \]
be as defined in Lemma \ref{lem:MCNegCoeff}. As we observed after Lemma \ref{lem:MCNegCoeff}, if we pick $a_0 \in A_K(\X_-) \subseteq A_K(\X_+)$ such that $q_{-,a_0}$ divides $q_-^\alpha$, then each $\tilde{s}_i^-(q_-, x_-)$ is a monomial in $q_{-,1}, \dots, q_{-, a_0-1}, q_{-, a_0+1}, \dots, q_{-,k}$, $x_-, q_{-,a_0}x_-^{-1}$.

\begin{lemma}\label{lem:InnerSpecialA0}
There exists $a_0 \in A_K(\X_-)$, $a_0 \neq 1$, such that $q_{-,a_0}$ divides $q_-^\alpha$.
\end{lemma}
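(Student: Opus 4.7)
My plan is to apply Remark~\ref{rem:QAlphaFLT}, which gives $q_-^\alpha = \prod_{a=1}^k q_{-,a}^{\inner{p_{-,a}, \alpha}}$ with exponents $\inner{p_{-,a}, \alpha} = s_{a, i_4}^{-, \sigma_-}/r'$, where $i_4 := i_1(\tau, \sigma_-')$ and $s_{a, i_4}^{-, \sigma_-}$ denotes the $D_{i_4}$-coordinate of $p_{-,a}$ in the basis $\{D_i : i \in I_{\sigma_-}\}$. For $a \in A_\orb(\X_-)$, $p_{-,a} = D_{\iota^{-1}(a)}$ with $\iota^{-1}(a) \in I_\orb \setminus \{i_4\}$, so this exponent vanishes automatically; hence it suffices to produce $a_0 \in \{2, \ldots, k\}$ with $s_{a_0, i_4}^{-, \sigma_-} > 0$, and any such $a_0$ will automatically lie in $A_K(\X_-)$.

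I will argue by contradiction. Assume $s_{a, i_4}^{-, \sigma_-} = 0$ for every $a = 2, \ldots, k$. The linear functional $\phi$ on $\Lat^\vee_\Q$ extracting the $D_{i_4}$-coordinate then vanishes on $W = \mathrm{span}\{p_2, \ldots, p_k\}$, and by equality of codimensions $W = \ker \phi$. Dually, writing $W = \{p : \inner{p, \beta_W} = 0\}$ for the primitive effective generator $\beta_W \in \Lat$ of the extremal ray of the extended Mori cone opposite $W$, I obtain $\beta_W \propto \alpha$ in $\Lat_\Q$. I will then rule this out through a support comparison in $\Z^R \supset \Lat$: since $V(\rho_i) \cdot \V(\tau_-) = 0$ for rays $\rho_i$ not contained in $\sigma_-$ or $\sigma_-'$, and $\inner{D_i, \alpha} = 0$ for $i \in I_\orb$, one obtains $\mathrm{supp}(\alpha) \subseteq \{i_1(\tau, \sigma_-), i_2, i_3, i_4\}$.

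The wall-crossing descriptions of Section~\ref{sect:GLSM} force $\mathrm{supp}(\beta_W)$ to include an index outside this set in every case. In Case~I, both $\sigma_-$ and $\sigma_-'$ are preserved, so $\beta_W$ is the circuit of a flopping quadrilateral disjoint from them or of a star subdivision introducing a new ray $i^{ex} \in I_\orb(\X_-)$. In Case~IIa, the fourth vertex $v = i_1(\tau, \sigma_+)$ of the flopping quadrilateral must differ from $i_4$: indeed $v = i_4$ would make the 3-cone $\sigma''$ containing $\tau_-^{ex}$ and $i_4$ overlap $\sigma_-$ in its interior (in the Calabi-Yau hyperplane), violating the fan axioms. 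In Cases~IIb and~III, $\mathrm{supp}(\beta_W)$ contains the newly added ray $i^{ex} \in I_\orb(\X_-)$, which is not in $I_K(\X_-) \supseteq \{i_1(\tau, \sigma_-), i_2, i_3, i_4\}$. The main technical difficulty will be the convexity-based exclusion of $v = i_4$ in Case~IIa, which relies on the honest disjointness of 3-cone interiors in the triangulation of the polytope induced by $\Sigma_-$.
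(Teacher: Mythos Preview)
Your approach is genuinely different from the paper's. The paper argues case by case using the mirror-curve identification of Proposition~\ref{prop:MirrorcurveWallCross}: in Case~I it observes $q_+^\alpha = q_-^\alpha$ under (\ref{eq:Qchange}), so $q_-^\alpha$ cannot be a pure power of $q_{-,1}$; in Case~IIa it shows $s_1^-(q_-)$ is a pure power of $q_{-,1}$ and then invokes linear independence of the $p_{-,a}$'s to conclude $s_5^-(q_-)$ is not; in Cases~IIb and~III it simply notes that condition~\ref{cond:PaOrbChoice} forces $p_{-,1} = D_{i^{ex}}$, so $1 \in A_\orb(\X_-)$ and the claim is immediate. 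Your route via the circuit $\beta_W$ and support comparison is more combinatorial and avoids the mirror-curve machinery, at the cost of a finer case-by-case convexity analysis.

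There is, however, a genuine gap in your Case~IIa. The cone you call $\sigma''$ (the 3-cone on the far side of $\tau_-^{ex}$ from $\sigma_-$) cannot overlap $\sigma_-$ in its interior: both are cones of $\Sigma_-$ sharing the facet $\tau_-^{ex}$, so their interiors are disjoint by the fan axioms. One can write down explicit configurations with $v = i_4$ in which $\sigma_-$, $\sigma''=\{i_1(\tau,\sigma_-),i_2,i_4\}$, and $\sigma_-'$ tile a neighbourhood of $b_{i_2}$ with pairwise disjoint interiors; what fails in such configurations is not any fan axiom for $\Sigma_-$ but the convexity of the flopping quadrilateral $\sigma_- \cup \sigma''$, i.e.\ the validity of the flop itself. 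A correct version of your argument runs as follows: if the flop is valid, then $\sigma_- \cup \sigma''$ is the convex hull of $\{i_1(\tau,\sigma_-), i_2, i_3, v\}$; if moreover $v = i_4$, then the triangle $\sigma_-' = \{i_2, i_3, i_4\}$ sits inside this convex quadrilateral (it is one of the two halves cut by the other diagonal $[i_3,i_4]$), so its interior meets $\sigma_-$ or $\sigma''$, contradicting the fan axioms for $\Sigma_-$. Equivalently, after the flop one of the two new 3-cones is $\sigma_+ = \{i_2, i_3, v\}$, which for $v = i_4$ coincides with $\sigma_-'$; but $\sigma_+$ and $\sigma_-'$ lie on opposite sides of $\tau$ in $\Sigma_+$, a contradiction. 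So your conclusion $v \neq i_4$ is correct, but the overlap you want is between $\sigma_-'$ and $\sigma_- \cup \sigma''$, not between $\sigma''$ and $\sigma_-$.
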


\begin{proof}
In Case I, observe that $s_4^+(q_+) = s_4^-(q_-)$ under the relation (\ref{eq:Qchange}), which implies that $q_+^\alpha = q_-^\alpha$. Since both $q_\pm^\alpha$ are monomials, and $q_{-,1} = q_{+,1}^{-c_1}$ for $c_1 >0$, $q_-^\alpha$ cannot be a power of $q_{-,1}$ and must contain some other $q_{-,a_0}$ as a factor.

In Case IIa, note that $s_1^+ = 1$. From the identification (\ref{eq:MirrorcurveWallCross}) and the relation (\ref{eq:XYFchangeCase2}), we see that $s_1^-$ is a power of $q_{-,1}$. Since $p_{-,1}, \dots, p_{-,k}$ are linearly independent, $s_5^-$ cannot be a power of $q_{-,1}$ as well and must contain some other $q_{-,a_0}$ as a factor. Thus $q_{-,a_0}$ divides $q_-^\alpha$.

In Cases IIb or III, we have $p_{-,1} = D_1$. Thus $1 \not \in A_K(\X_-)$, and the lemma follows.
\end{proof}

With $a_0$ chosen as in Lemma \ref{lem:InnerSpecialA0}, the identification (\ref{eq:MirrorcurveWallCross}) and the relation (\ref{eq:Qchange}) imply that $q_{+,a_0}$ divides $q_+^\alpha$, so that each $\tilde{s}_i^+(q_+, x_+)$ is a monomial in $q_{+,1}, \dots, q_{+, a_0-1}, q_{+, a_0+1} ,\dots, q_{+,k}$, $x_+, q_{+,a_0}x_+^{-1}$. As a shorthand notation, we set $\hat{q}_{\pm} := (q_{\pm,1}, \dots, q_{\pm, a_0-1}, q_{\pm, a_0+1}, \dots, q_{\pm, k})$. Moreover, we set
\begin{equation}\label{eq:ZXRelation}
z_\pm := q_{\pm, a_0}x_\pm^{-1}.
\end{equation}
Then $z_\pm$ have relation
\begin{equation}\label{eq:Zchange}
  z_- = \begin{cases}
    z_+q_{+,1}^{c_{a_0}} & \mbox{ in Case I},\\
    z_+q_{+,1}^{c_{a_0} - \frac{s_{14}^+}{m_4}} & \mbox{ in Case II},\\
    z_+q_{+,1}^{c_{a_0}+\frac{s_{14}^+n_1}{m_1\ell}- \frac{f_+s_{14}^+}{\ell}} & \mbox{ in Case III}.
  \end{cases}
\end{equation}

Let $\tilde{\M}_\pm = \Spec \C[q_{\pm,1}, \dots, q_{\pm, a_0-1}, q_{\pm, a_0+1}, q_{\pm, k}, x_\pm, z_\pm] \cong \C^{k+1}$. The \emph{open B-model moduli space} $\tilde{\M}$ in the inner case is formed by gluing the two charts $\tilde{\M}_\pm$ together using the relations (\ref{eq:Qchange} ,\ref{eq:XYFchange}, \ref{eq:Zchange})
on the common open subset $\{q_{\pm,1} \neq 0\}$. We call the point $P_\pm \in \tilde{\M}_\pm$ where $(\hat{q}_\pm, x_\pm, z_\pm) = 0$ the \emph{large radius limit (LRL)} point of $\X_\pm$, which is consistent with the limit $q_\pm \to 0, x_\pm \to 0, q_\pm^\alpha x_\pm^{-1} \to 0$ we took in Section \ref{sect:MirrorThmDisk}.

As in (\ref{eq:MirrorcurveIncludeX}), we can view $H_\pm(q_\pm, x_\pm, y_\pm)=0$ as a family of equations in the indeterminate $y_\pm$ parametrized by $(\hat{q}_\pm, x_\pm, z_\pm)$.
Since the Aganagic-Vafa branes are inner, as we observed in Section \ref{sect:MirrorcurveOpen}, each coefficient $\tilde{s}_i^\pm(q_\pm, x_\pm)$ in $H_\pm$ (see (\ref{eq:CoeffWallCrossForm})) is a monomial in $(\hat{q}_\pm, x_\pm, z_\pm)$. Thus this family is defined over all of $\tilde{\M}_\pm$. Proposition \ref{prop:MirrorcurveWallCross} then implies that the two families fit into a global family of equations over $\tilde{\M}$.

\begin{remark} \rm{
In either the outer or the inner case, $\widetilde{\M}$ is isomorphic to the total space of a rank-$k$ vector bundle over a weighted projective line, and $q_{\pm,1}$ give coordinates on the base.
}\end{remark}

\subsection{Proof of Theorem \ref{thm:OCTC}}\label{sect:OCTCProof}
Recall that the mirror theorem for disk invariants (Theorem \ref{thm:MirrorThmDisk}) relates the disk potentials to local solutions to the mirror curve equation around the LRL points. Via Proposition \ref{prop:MirrorcurveWallCross}, we showed in the previous subsection that the mirror curve equations $H_\pm = 0$ of $\X_\pm$ fit into a global family over the open B-model moduli space $\tilde{\M}$. We will identify disk invariants of $\X_\pm$ through analytic continuation of local solutions to the global mirror curve equation on $\tilde{\M}$.

We start by defining a particular coordinate subspace of $\tilde{\M}_\pm$. For $j = 1, \dots, \ell-1$, let $i_j \in I_\orb(\X_-)$ such that $m_{i_j} = 0$ and $n_{i_j}^\pm = j$. Then each $b_{i_j}$ is an interior lattice point of the cone $\tau$ (or $\tau_-$ in Case III). Recall the bijection $\iota: I_\orb(\X_-) \to A_\orb(\X_-)$ from condition \ref{cond:PaOrbChoice} in Section \ref{sect:MirrorCurve}. Set $a_j = \iota(i_j)$ and
\[  A_0 = \{a_1, \dots, a_{\ell-1}\} \subseteq A_\orb(\X_-).  \]
Then the mirror curve equation $H_-$ has form
\begin{equation}\label{eq:HMinusSimplified}
  1 + q_{-, a_1}y_- + \cdots + q_{-, a_{\ell-1}}y_-^{\ell-1} + y_-^\ell + \cdots
\end{equation}
where each of the remaining terms contains as a factor $q_{-, a}$ for some $a \not \in A_0$, $x_-$ (or $z_-$). Note that in the inner case, $a_0 \not \in A_0$. $A_0$ defines an $(\ell-1)$-dimensional coordinate subspace $\tilde{\M}_{\pm,0}$ of $\tilde{\M}_\pm$:
\[
\begin{aligned}
   \tilde{\M}_{\pm,0} &= \Spec \C[q_{\pm, a_1}, \dots, q_{\pm, a_{\ell-1}}]\\
    &= \begin{cases}
  \{q_{\pm, a} = 0 \mbox{ for all } a \not \in A_0, x_\pm = 0\} & \mbox{ in the outer case},\\
  \{q_{\pm, a} = 0 \mbox{ for all } a \not \in A_0 \cup \{a_0\}, x_\pm = z_\pm = 0\} & \mbox{ in the inner case}.
\end{cases}
\end{aligned} \]

As in Section \ref{sect:MirrorThmDisk}, if $\L_-$ is outer, there are $\ell$ solutions $\kappa_1^-, \dots, \kappa_\ell^-$ to the equation $H_-(y_-) = 0$ in a neighborhood of the LRL point $P_-$ that satisfy
\[  \lim_{q_- \to 0, x_- \to 0} \log \kappa_j^- = \frac{\pi\sqrt{-1}}{\ell}(-1+2j). \]
These solutions have local power series expansions in $(q_-, x_-)$. If $\L_-$ is inner, there are $\ell$ solutions $\kappa_1^-, \dots, \kappa_\ell^-$ to the equation $H_-(y_-) = 0$ in a neighborhood of $P_-$ that satisfy
\[  \lim_{\hat{q}_- \to 0, x_- \to 0, z_- \to 0} \log \kappa_j^- = \frac{\pi\sqrt{-1}}{\ell}(-1+2j). \]
These solutions have local power series expansions in $(\hat{q}_-, x_-, z_-)$, which via (\ref{eq:ZXRelation}) translate to power series in $(\hat{q}_-, x_-, q_-^\alpha x_-^{-1})$. Depending on the framing $f_-$, $H_-(y_-)=0$ may have other solutions near $P_-$, but these solutions all have a pole along $\tilde{\M}_{-,0}$. Theorem \ref{thm:MirrorThmDisk} gives the relation
\begin{equation}\label{eq:WVMinus}
  \left( x_-\frac{\partial}{\partial x_-} \right)^2
  \begin{bmatrix}   \xi_\ell^{\ell-1}W_1^-\\ \vdots \\ \xi_\ell W_{\ell-1}^- \\ W_\ell^- \end{bmatrix}
    = U_\ell \left( x_-\frac{\partial}{\partial x_-} \right)
    \begin{bmatrix} \log \kappa_1^-\\ \vdots \\ \log \kappa_{\ell-1}^- \\ \log \kappa_\ell^- \end{bmatrix}.
\end{equation}

Let us now turn to $\X_+$. We first consider Cases I or II. Similar to above, there are $\ell$ solutions $\kappa_1^+, \dots, \kappa_\ell^+$ to the equation $H_+(y_+)=0$ in a neighborhood of the LRL point $P_+$ that satisfy
\[  \lim_{\substack{q_+ \to 0, x_+ \to 0 \\ \mbox{or } \hat{q}_+ \to 0, x_+ \to 0, z_+ \to 0}} \log \kappa_j^+ = \frac{\pi\sqrt{-1}}{\ell}(-1+2j). \]
Depending on the framing $f_+$, $H_+(y_+) = 0$ may have other solutions near $P_+$, but these solutions all have a pole along $\tilde{\M}_{+,0}$. Theorem \ref{thm:MirrorThmDisk} gives the relation
\begin{equation}\label{eq:WVPlusCase12}
  \left( x_+\frac{\partial}{\partial x_+} \right)^2
  \begin{bmatrix}   \xi_\ell^{\ell-1}W_1^+\\ \vdots \\ \xi_\ell W_{\ell-1}^+ \\ W_\ell^+ \end{bmatrix}
    = U_\ell \left( x_+\frac{\partial}{\partial x_+} \right)
    \begin{bmatrix} \log \kappa_1^+\\ \vdots \\ \log \kappa_{\ell-1}^+ \\ \log \kappa_\ell^+ \end{bmatrix}.
\end{equation}

Now along a path in $\tilde{\M}$ from $P_+$ to $P_-$, the local solutions $\kappa_1^+, \dots, \kappa_\ell^+$ analytically continue to a permutation of $\kappa_1^-, \dots, \kappa_\ell^-$. Using Lemma \ref{lem:Monodromy} in the subsequent subsection, we are able find a path along which $\kappa_1^+, \dots, \kappa_\ell^+$ analytically continue to $\kappa_1^-, \dots, \kappa_\ell^-$ in the prescribed order. From the relation (\ref{eq:XYFchange}) between $x_\pm$, we see that
\[  x_+\frac{\partial}{\partial x_+} = x_-\frac{\partial}{\partial x_-}.  \]
Thus (\ref{eq:WVMinus}, \ref{eq:WVPlusCase12}) imply that
\[  \left( x_+\frac{\partial}{\partial x_+} \right)^2
\begin{bmatrix}   \xi_\ell^{\ell-1}W_1^+\\ \vdots \\ \xi_\ell W_{\ell-1}^+ \\ W_\ell^+ \end{bmatrix} = \left( x_-\frac{\partial}{\partial x_-} \right)^2
\begin{bmatrix}   \xi_\ell^{\ell-1}W_1^-\\ \vdots \\ \xi_\ell W_{\ell-1}^- \\ W_\ell^- \end{bmatrix}  \]
under analytic continuation. The desired identification (\ref{eq:OCTCThmCase12}) then follows since each term in each series $W_j^\pm$ contains a nonzero power of $x_\pm$ as a factor.

Now we consider Case III. In this case we have $i_{\ell_1} = 1$ and $a_{\ell_1} = \iota(1) = 1$. Thus $\tilde{\M}_{\pm,0}$ glue to form a (connected) $(\ell-1)$-dimensional coordinate subspace $\tilde{\M}_0$ of $\tilde{\M}$. Since $s_4^+(q_+) = q_{+,1}^{s_{14}^+}$ as we showed in the proof of Lemma \ref{lem:MirrorcurveIdCase3Plus}, we observe via Lemma \ref{lem:MCOrbCoeff} that the mirror curve equation $H_+$ has form
\[  1 + q_{+,a_1}y_+ \cdots + q_{+, a_{\ell_1-1}}y_+^{\ell_1-1} + y_+^{\ell_1} + q_{+, a_{\ell_1+1}}q_{+,1}^{\frac{s_{14}^+}{\ell_2}}y_+^{\ell_1+1} + \cdots + q_{+, a_{\ell-1}}q_{+,1}^{\frac{(\ell_2-1)s_{14}^+}{\ell_2}}y_+^{\ell-1} + q_{+,1}^{s_{14}^+}y_+^{\ell} + \cdots \]
where each of the remaining terms contains as a factor $q_{+,a}$ for some $a \not \in A_0$, $x_+$ (or $z_+$).

Similar to above, there are $\ell_1$ solutions $\kappa_1^+, \dots, \kappa_{\ell_1}^+$ to the equation $H_+(y_+) = 0$ in a neighborhood of the LRL point $P_+$ that satisfy
\[  \lim_{\substack{q_+ \to 0, x_+ \to 0 \\ \mbox{or } \hat{q}_+ \to 0, x_+ \to 0, z_+ \to 0}} \log \kappa_j^+ = \frac{\pi\sqrt{-1}}{\ell_1}(-1+2j). \]
Moreover, there are $\ell_2$ solutions $\kappa_{\ell_1+1}^+, \dots, \kappa_{\ell}^+$ to $H_+(y_+)=0$ locally near $P_+$ that satisfy the asymptotics
\[  \log \kappa_{\ell_1+j}^+ \sim \frac{\pi\sqrt{-1}}{\ell_2}(-1+2j) - \frac{s_{14}^+}{\ell_2}\log q_{+,1}.  \]
Depending on the framing $f_+$, $H_+(y_+)=0$ may have other solutions near $P_+$, by these solutions all have a pole along $\tilde{\M}_{+,0}$.

From Lemma \ref{lem:MirrorcurveIdCase3Plus} and especially the relation $y_{+,2} = y_+q_{+,1}^{\frac{s_{14}^+}{\ell_2}}$ from (\ref{eq:XYFchangeCase3Plus}), we see that
$q_{+,1}^{\frac{s_{14}^+}{\ell_2}}\kappa_{\ell_1+1}$, $\dots$, $q_{+,1}^{\frac{s_{14}^+}{\ell_2}}\kappa_\ell$ are the $\ell_2$ solutions to $H_{+,2}(y_{+,2}) = 0$ that converges at $P_+$. Moreover, the relation $x_{+,2} = x_+q_{+,1}^{\frac{s_{14}^+(\ell_1 - n_1)}{m_1\ell_2} + \frac{s_{14}^+f_+}{\ell_2}}$ from (\ref{eq:XYFchangeCase3Plus}) implies that
\[  x_{+,2} \frac{\partial}{\partial x_{+,2}} = x_+ \frac{\partial}{\partial x_+}.  \]
Thus, if we view each $W_{\ell_1+j}^+$ as a series in $(q_+, x_+)$,  Theorem \ref{thm:MirrorThmDisk} gives the relation
\begin{equation}\label{eq:WVPlusCase3}
  \left( x_+\frac{\partial}{\partial x_+} \right)^2\begin{bmatrix}   \xi_{\ell_1}^{\ell_1-1}W_1^+\\ \vdots \\ \xi_{\ell_1} W_{\ell_1-1}^+ \\ W_{\ell_1}^+ \\ \xi_{\ell_2}^{\ell_2-1} W_{\ell_1+1}^+ \\ \vdots \\ W_\ell^+ \end{bmatrix} = \diag(U_{\ell_1}, U_{\ell_2}) \left( x_+\frac{\partial}{\partial x_+} \right)
    \begin{bmatrix} \log \kappa_1^+\\ \vdots \\ \log \kappa_{\ell-1}^+ \\ \log \kappa_\ell^+ \end{bmatrix}.
\end{equation}

Now along a path in $\tilde{\M}_0$ from $P_+$ to $P_-$, the local solutions $\kappa_1^+, \dots, \kappa_\ell^+$ analytically continue to a permutation of $\kappa_1^-, \dots, \kappa_\ell^-$, since the two sets of solutions are the only ones that do not have a pole along $\tilde{\M}_0$. Again we use Lemma \ref{lem:Monodromy} to select a path along which $\kappa_1^+, \dots, \kappa_\ell^+$ analytically continue to $\kappa_1^-, \dots, \kappa_\ell^-$ in the prescribed order. Then the desired identification (\ref{eq:OCTCThmCase3}) follows from (\ref{eq:WVMinus}, \ref{eq:WVPlusCase3}) in a way similar to the previous case.

\subsection{Monodromy of solutions}\label{sect:Monodromy}
In this subsection, we prove a lemma on the monodromy of solutions to the mirror curve equation that is required by the analytic continuation in the proof of Theorem \ref{thm:OCTC}. The proof of the lemma is due to Iritani.

\begin{lemma}\label{lem:Monodromy}
Consider the coordinate subspace $\tilde{\M}_{-,0} = \Spec \C[q_{-,a_1}, \dots, q_{-,a_{\ell-1}}]$ with origin $P_-$ and the local solutions $\kappa_1^-, \dots$, $\kappa_\ell^-$ to the mirror curve equation $H_-(y_-)=0$ as defined in the previous subsection. Given any permutation $\sigma \in S_\ell$, there exist a loop $\gamma_\sigma$ in $\tilde{\M}_{-,0}$ based at $P_-$ along which $\kappa_1^-, \dots, \kappa_\ell^-$ analytically continue to $\kappa_{\sigma(1)}^-, \dots, \kappa_{\sigma(\ell)}^-$ in the prescribed order.
\end{lemma}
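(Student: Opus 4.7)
The plan is to restrict attention to the coordinate subspace $\tilde{\M}_{-,0}$ and realize the roots of the restricted mirror curve equation as the fiber of an honest Galois covering with deck group $S_\ell$. Restricting $H_-(y_-)=0$ to $\tilde{\M}_{-,0}$, every term of (\ref{eq:HMinusSimplified}) containing some $q_{-,a}$ with $a \not\in A_0$ (or containing $x_-$ or $z_-$) vanishes. What survives is precisely
\[
  y_-^\ell + q_{-,a_{\ell-1}}\,y_-^{\ell-1} + \cdots + q_{-,a_1}\,y_- + 1 \;=\; 0,
\]
a family of monic degree-$\ell$ polynomials in $y_-$ with constant term $1$, parametrized freely by $(q_{-,a_1},\dots,q_{-,a_{\ell-1}}) \in \C^{\ell-1}$. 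At the base point $P_-$ the equation specializes to $y_-^\ell+1=0$, whose $\ell$ distinct roots are exactly $\kappa_1^-,\dots,\kappa_\ell^-$.

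Let $U \subseteq \tilde{\M}_{-,0}$ be the complement of the discriminant hypersurface; then $P_- \in U$, and over $U$ the $\ell$ roots form an unramified $\ell$-sheeted covering (on which $\pi_1(U,P_-)$ acts by monodromy through a homomorphism to $S_\ell$, permuting $\kappa_1^-,\dots,\kappa_\ell^-$). To show this homomorphism is surjective, introduce the auxiliary variety
\[
  V \;=\; \bigl\{(\alpha_1,\dots,\alpha_\ell)\in(\C^*)^\ell : \alpha_1\cdots\alpha_\ell = (-1)^\ell\bigr\},
\]
and its open subset $V^\circ \subset V$ where the coordinates are pairwise distinct. The map $V^\circ \to U$ sending $(\alpha_1,\dots,\alpha_\ell)$ to the elementary symmetric functions of its entries identifies $V^\circ/S_\ell$ with $U$, with $S_\ell$ acting by permuting coordinates.

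The key observation is that this $S_\ell$-action on $V^\circ$ is \emph{free}, precisely because on $V^\circ$ no two coordinates coincide. Moreover, $V \cong (\C^*)^{\ell-1}$ (projecting off the last coordinate) is connected, and $V^\circ$ is the complement of a codimension-one subvariety in $V$, hence connected as well. Therefore $V^\circ \to U$ is a connected Galois covering with deck transformation group $S_\ell$, which forces the monodromy representation $\pi_1(U,P_-) \to S_\ell$ to be surjective. Hence for every $\sigma \in S_\ell$ there is a loop $\gamma_\sigma$ in $U \subseteq \tilde{\M}_{-,0}$ based at $P_-$ along which the local solutions $\kappa_1^-,\dots,\kappa_\ell^-$ analytically continue to $\kappa_{\sigma(1)}^-,\dots,\kappa_{\sigma(\ell)}^-$ in the prescribed order.

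The only step that requires any care is verifying the connectedness of $V^\circ$ and the freeness of the $S_\ell$-action; both are essentially formal from the setup, so the argument is short. There is no serious obstacle once one recognizes the restricted mirror curve equation as a universal family of monic polynomials with prescribed constant term.
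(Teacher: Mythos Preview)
Your proposal is correct and follows essentially the same approach as the paper: both identify the restricted equation on $\tilde{\M}_{-,0}$ with the universal family of monic degree-$\ell$ polynomials with constant term $1$, realize the root locus as an $S_\ell$-cover via elementary symmetric functions, and use connectedness of this cover to conclude that the monodromy is the full symmetric group. The only cosmetic difference is that you work directly over the complement $U$ of the discriminant (obtaining a free $S_\ell$-action on $V^\circ$), whereas the paper first takes the full ramified cover $\hat{\M}_- = \Spec\bigl(\C[\kappa_1^-,\dots,\kappa_\ell^-]/(\kappa_1^-\cdots\kappa_\ell^- - (-1)^\ell)\bigr)$ and argues via irreducibility of this hypersurface; the paper then notes in a subsequent remark that one may equally restrict to the complement of the big diagonal, which is exactly your $V^\circ$.
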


\begin{proof}
From (\ref{eq:HMinusSimplified}), we see that on $\tilde{\M}_{-,0}$, $\kappa_1^-, \dots, \kappa_\ell^-$ are solutions to the restricted equation
\begin{equation}\label{eq:HMinusRestricted}
1 + q_{-,a_1}y_- + \cdots + q_{-,a_{\ell-1}}y_-^{\ell-1}+y_-^\ell = 0.
\end{equation}
In particular, $\kappa_1^- \cdots \kappa_\ell^- = (-1)^\ell$, and if $s_j$ is the $j$-th elementary symmetric polynomial on $\ell$ variables, then
\[ q_{-, a_{\ell-j}} = (-1)^js_j(\kappa_1^-, \dots, \kappa_\ell^-). \]
Now consider the action of $S_\ell$ on $\hat{R} = \C[\kappa_1^-, \dots, \kappa_\ell^-]/(\kappa_1^- \cdots \kappa_\ell^- - (-1)^\ell)$ by permuting the $\kappa_j^-$'s. The injective ring homomorphism
\[  \C[q_{-,a_1}, \dots, q_{-,a_{\ell-1}}] \to \hat{R}, \qquad q_{-, a_{\ell-j}} \mapsto (-1)^js_j(\kappa_1^-, \dots, \kappa_\ell^-)  \]
has image equal to the $S_\ell$-invariant subring of $\hat{R}$. This implies that the induced map
\[  h: \hat{\M}_- := \Spec \hat{R} \to \tilde{\M}_{-,0}   \]
is a ramified $S_\ell$-cover. The preimage of $h^{-1}(P_-)$ consists of $\ell!$ points $\{(\kappa_{\sigma(1)}^-(0), \dots, \kappa_{\sigma(\ell)}^-(0)) \mid \sigma \in S_\ell\}$. Note that $\hat{\M}_-$ is an irreducible hypersurface in $\C^\ell = \Spec \C[\kappa_1^-, \dots, \kappa_\ell^-]$ and is thus connected. If we take a path $\hat{\gamma}_\sigma$ from $(\kappa_1^-(0), \dots, \kappa_\ell^-(0))$ to $(\kappa_{\sigma(1)}^-(0), \dots, \kappa_{\sigma(\ell)}^-(0))$ in $\widehat{\M}_-$, the projection $\gamma_\sigma = h \circ \hat{\gamma}_\sigma$ gives our desired loop.
\end{proof}

\begin{remark} \label{ref:DiscriminantLocus} \rm{
Lemma \ref{lem:Monodromy} is already used in \cite{CCIT09}, where it is stated as: the monodromy around the discriminant locus of (\ref{eq:HMinusRestricted}) in $\tilde{\M}_{-,0}$, i.e. the locus where (\ref{eq:HMinusRestricted}) has repeated roots, acts transitively on the roots $\kappa_1^-, \dots, \kappa_n^-$ by permutation (see the proof of Proposition A.7). Indeed, the $S_\ell$-cover $h$ defined above is ramified over the big diagonal of $\hat{\M}_-$, i.e. the set of points $(\kappa_1^-, \dots, \kappa_\ell^-)$ where not all coordinates are distinct, and the image of the big diagonal under $h$ is exactly the discriminant locus of (\ref{eq:HMinusRestricted}). We can restrict $h$ to the complement of the big diagonal in $\hat{\M}_-$ and the complement of the discriminant locus of (\ref{eq:HMinusRestricted}) in $\tilde{\M}_{-,0}$, and the same argument goes through.
} \end{remark}


\end{document}